\definecolor{verylight}{gray}{0.97}
\definecolor{light}{gray}{0.9}
\definecolor{medium}{gray}{0.85}
\def\NZQ{\Bbb}               
\def\NN{{\NZQ N}}
\def\QQ{{\NZQ Q}}
\def\ZZ{{\NZQ Z}}
\def\AA{{\NZQ A}}
\def\PP{{\NZQ P}}
\def\FF{{\NZQ F}}
\def\frk{\frak}               
\def\Phi{{\frk n}}
\def\Phi{{\frk N}}
\def\MM{{\mathcal M}}
\def\ME{{\mathcal E}}
\def\MF{{\mathcal F}}
\def\MH{{\mathcal H}}
\def\MS{{\mathcal S}}
\def\MU{{\mathcal U}}
\def\Gr{{\mathcal Gr}}
\def\ub{{\bold u}}
\def\ab{{\bold a}}
\def\cb{{\bold c}}
\def\opn#1#2{\def#1{\operatorname{#2}}} 
\opn\chara{char} \opn\length{\ell} \opn\pd{pd} \opn\rk{rk}
\opn\projdim{proj\,dim} \opn\injdim{inj\,dim} \opn\rank{rank}
\opn\depth{depth} \opn\grade{grade} \opn\height{height}
\opn\embdim{emb\,dim} \opn\codim{codim} \opn\sgn{sgn}
\opn\Tr{Tr} \opn\bigrank{big\,rank}
\opn\superheight{superheight}\opn\lcm{lcm}
\opn\trdeg{tr\,deg}
\opn\reg{reg} \opn\lreg{lreg} \opn\ini{in} \opn\lpd{lpd}
\opn\size{size}\opn\bigsize{bigsize}
\opn\cosize{cosize}\opn\bigcosize{bigcosize}
\opn\sdepth{sdepth}\opn\sreg{sreg}
\opn\link{link}\opn\fdepth{fdepth} \opn\generic{generic}
\opn\div{div} \opn\Div{Div} \opn\cl{cl} \opn\Cl{Cl} \opn\Cor{Cor}
\opn\Spec{Spec} \opn\Supp{Supp} \opn\supp{supp} \opn\Sing{Sing}
\opn\Ass{Ass} \opn\Min{Min}\opn\Mon{Mon} \opn\dstab{dstab} \opn\astab{astab}
\opn\Ann{Ann} \opn\Rad{Rad} \opn\Soc{Soc} 
\opn\Im{Im} \opn\Ker{Ker} \opn\Coker{Coker} \opn\Am{Am}
\opn\Hom{Hom} \opn\Tor{Tor} \opn\Ext{Ext} \opn\End{End}
\opn\Aut{Aut} \opn\id{id} \opn\span{span}
\opn\nat{nat}
\opn\pff{pf}
\opn\Pf{Pf} \opn\GL{GL} \opn\SL{SL} \opn\mod{mod} \opn\ord{ord}
\opn\Gin{Gin} \opn\Hilb{Hilb}\opn\sort{sort}
\opn\aff{aff} \opn\con{conv} \opn\relint{relint} \opn\st{st}
\opn\lk{lk} \opn\cn{cn} \opn\core{core} \opn\vol{vol}
\opn\link{link} \opn\star{star}\opn\lex{lex} 
\opn\gr{gr}
\def\pot#1#2{#1[\kern-0.28ex[#2]\kern-0.28ex]}
\opn\dirlim{\underrightarrow{\lim}}
\opn\inivlim{\underleftarrow{\lim}}
\let\to=\rightarrow
\def\Implies{\ifmmode\Longrightarrow \else
        \unskip${}\Longrightarrow{}$\ignorespaces\fi}
\def\implies{\ifmmode\Rightarrow \else
        \unskip${}\Rightarrow{}$\ignorespaces\fi}
\def\iff{\ifmmode\Longleftrightarrow \else
        \unskip${}\Longleftrightarrow{}$\ignorespaces\fi}
\newtheorem{Theorem}{Theorem}[section]
\newtheorem{Lemma}[Theorem]{Lemma}
\newtheorem{Corollary}[Theorem]{Corollary}
\newtheorem{Proposition}[Theorem]{Proposition}
\newtheorem{Remark}[Theorem]{Remark}
\newtheorem{Example}[Theorem]{Example}
\newtheorem{Definition}[Theorem]{Definition}
\newtheorem{Question}[Theorem]{Question}
\let\epsilon\varepsilon
\let\kappa=\varkappa
\def\qed{\ifhmode\textqed\fi
      \ifmmode\ifinner\quad\qedsymbol\else\dispqed\fi\fi}
\def\textqed{\unskip\nobreak\penalty50
       \hskip2em\hbox{}\nobreak\hfil\qedsymbol
       \parfillskip=0pt \finalhyphendemerits=0}
\def\dispqed{\rlap{\qquad\qedsymbol}}
\opn\dis{dis}
\def\pnt{{\raise0.5mm\hbox{\large\bf.}}}
\opn\Lex{Lex}
\begin{document}


\title{ Bouquet algebra of toric ideals }

\author{Sonja Petrovi\'c, Apostolos Thoma, Marius Vladoiu}\thanks{The third author was partially supported by project PN-II-RU-TE-2012-3-0161, granted by the Romanian National Authority for Scientific Research, CNCS - UEFISCDI}

\address{Sonja Petrovi\'c, Department of Applied Mathematics,  Illinois Institute of Technology, Chicago 60616, USA}
\email{sonja.petrovic@iit.edu}

\address{Apostolos Thoma, Department of Mathematics, University of Ioannina, Ioannina 45110, Greece} 
\email{athoma@uoi.gr}


\address{Marius Vladoiu, Faculty of Mathematics and Computer Science, University of Bucharest, Str. Academiei 14, Bucharest, RO-010014, Romania, and}
\address{Simion Stoilow Institute of Mathematics of Romanian Academy, Research group of the project PN-II-RU-TE-2012-3-0161, P.O.Box 1--764, Bucharest 014700, Romania}
\email{vladoiu@fmi.unibuc.ro}

\subjclass[2010]{14M25, 13P10, 05C65, 13D02} 
\keywords{Toric ideals, matroids, Graver basis, universal Gr\"obner basis, hypergraphs, Markov bases, resolutions}

\begin{abstract}
To any toric ideal $I_A$, encoded by an integer matrix $A$, we associate a matroid structure called {\em the bouquet graph} of $A$ and introduce another toric ideal called {\em the bouquet ideal} of $A$. We show how these   objects capture the essential combinatorial and algebraic information  about  $I_A$. Passing from the toric ideal to its bouquet ideal reveals a structure that allows us to classify several cases. For example, on the one end of the spectrum, there are ideals that we call {\em stable}, for which bouquets capture  the complexity of  various generating sets as well as  the minimal free resolution. On the other end of the spectrum lie toric ideals whose various bases (e.g., minimal generating sets, Gr\"obner, Graver bases) coincide. Apart from  allowing for classification-type results, bouquets provide a new way to construct families of examples of toric ideals with various interesting properties, such as robustness, genericity, and unimodularity.   The new bouquet framework can be used to provide a characterization of toric ideals  whose  Graver basis, the universal Gr\"obner  basis, any reduced Gr\"obner basis and any minimal generating set  coincide.  
\end{abstract}
\maketitle

\section*{Introduction} 

Toric ideals appear prominently in polyhedral geometry, algebraic topology, and algebraic geometry. Naturally,  most famous classes of toric ideals come equipped with a rich algebraic and homological structure. They also have a common combinatorial feature, namely, equality of various bases. For example, generic toric ideals are minimally generated  by indispensable binomials, robust toric by the universal Gr\"obner basis, those that are Lawrence  are minimally generated by the Graver basis, and circuits equal the Graver basis for unimodular toric ideals.  

In light of this, the present  manuscript offers a combinatorial classification of all toric ideals. This classification simultaneously reveals equality of various distinguished subsets of binomials, provides a unifying framework for studying combinatorial signatures of toric ideals, and introduces a technique to solve several related (open) problems from combinatorial commutative algebra, algebraic geometry, combinatorics and integer programming. Furthermore, it provides a technique to construct (infinitely many) examples of five important classes of toric ideals (which turned out to be a challenge so far for generic, robust and strongly robust) often exploited in proving several important results in our paper. Before stating our main results, Theorems A-E below, we offer a brief overview of the motivation and some consequences. 

As is customary in the literature, we consider toric ideals to be encoded by a matrix $A$ whose columns are exponents in the monomial parametrization of the corresponding toric variety. Throughout the paper, we also refer to  bases of $I_A$ as  those of $A$. 

\subsection*{Various bases of  toric ideals} Apart from minimal generating sets, which we may also refer to as minimal Markov bases for brevity {\cite{DSS,AHT}, the well-known distinguished subsets of binomials in a toric ideal $I_A$ are: the set of circuits $\mathcal C(A)$, the universal Gr\"obner basis $\mathcal U(A)$, the set of indispensable binomials $\MS(A)$, the universal Markov basis $\MM(A)$, and  the Graver basis $\Gr(A)$. Briefly, $\MS(A)$ are binomials appearing in every minimal generating set; $\mathcal U(A)$ is the union of all of the (finitely many) reduced Gr\"obner bases; and $\Gr(A)$  is a crucial set for integer programming \cite{G,DHK} consisting of the primitive binomials in the ideal.  When $\Ker_{\ZZ}(A)\cap\NN^n=\{\bf 0\}$, the generally proper inclusions between them hold as follows. 
\begin{center}
\begin{tikzpicture}[descr/.style={fill=white,inner sep=1.5pt}]
    \matrix (m) [matrix of math nodes,row sep=3em,column sep=4em]
  {
     \mathcal C(A) & \mathcal U(A) & \Gr(A) \\
     \MS(A) & \MM & \MM(A) \\};
     \path[-stealth]
    (m-1-1) edge [draw=none] node [sloped] {$\subseteq$} (m-1-2)
    (m-1-2) edge [draw=none] node [sloped] {$\subseteq$} (m-1-3)
    (m-2-1) edge [draw=none] node [sloped] {$\subseteq$} (m-2-2)  
    (m-2-2) edge [draw=none] node [sloped] {$\subseteq$} (m-2-3) 
    (m-1-2) edge [double,-] node [left] {\tiny{robust}} (m-2-2)
    (m-1-2) edge [double,-] node [above, sloped] {\tiny{generalized}} 
                            node [below, sloped] {\tiny{robust}} (m-2-3) 
    (m-1-3) edge[double,-,out=-13,in=245] node[below, sloped] {\tiny{ strongly robust}} (m-2-1) 
    (m-1-1) edge[double,-,bend left=25] node[above,sloped] {\tiny{(unimodular)}} (m-1-3) 
    (m-2-1) edge[double,-,bend right=20] node[below] {\tiny{(generic)}} (m-2-2)  
    (m-2-3) edge[draw=none] node [sloped, auto=false, allow upside down] {$\subseteq$} (m-1-3);    
\end{tikzpicture}
\end{center}
\vspace{-1.5cm}
In the diagram above, $\MM$ represents one minimal binomial generating set of $I_A$, or, in the terminology used later in the paper and in some applications, a minimal Markov basis. For a discussion on the inclusions in the general case, see \cite{CTV2}. 

Equalities between some pairs of these sets were known previously in  special cases, perhaps the best known of which is that if $A$ is unimodular then $\mathcal C(A)=\Gr(A)$. One new class we introduce is  $S$-Lawrence ideals, playing a prominent role in our classification; see Theorem C below.  

\subsection*{Five prominent classes of toric ideals} As mentioned before, apart from classifying toric ideals according to their combinatorial signatures, we provide a technique that constructs infinitely many examples of toric ideals that are of interest in various fields. Let us outline their roles and definitions. 

\emph{Robust} toric ideals are those for which the universal Gr\"obner basis is a minimal generating set. Unlike the other four classes, robustness makes sense  for arbitrary polynomial ideals as well. A beautiful family of robust ideals are ideals of maximal minors of matrices of linear forms that are column-graded  \cite[Theorem 4.2]{CDG}, which generalize the ideals of maximal minors of a matrix of indeterminates \cite{BZ,SZ}, and the ideals of maximal minors of a sparse matrix of indeterminates \cite{Bo}. Another family of robust ideals is given by the defining ideals $I(\widetilde{L})$ of the closure $\widetilde{L}$ in $(\PP^1)^n$ of linear spaces $L\subset \AA^n$, see \cite[Theorem 1.3]{AB}. 
 Restricting to special cases, of course, reveals more properties. For example, the toric ideal of a simple graph is robust if and only if is minimally generated by its Graver basis; this was shown in \cite[Theorem 3.2]{BBDLMNS}  using the graph-theoretic classification of bases from \cite{RTT,TT}. In other words, $\MS(A)=\MU(A)=\MM(A)=\Gr(A)$ holds for robust toric ideals of graphs. 
More generally, $\MS(A)=\MM(A)$ holds for any robust toric ideal, see \cite[Theorem 5.10]{T}, and thus  $\MU(A)=\MM(A)$ also. 

In the class of \emph{generalized robust} toric ideals, introduced in \cite{T},  the universal Markov basis equals the universal Gr\"obner basis.  One very nice property of this class of ideals is that it properly includes the class of robust toric ideals, see \cite[Corollary 5.12]{T} and \cite[Example 3.6]{T}.  
The third class are toric ideals whose circuits equal the Graver basis. A proper subclass is the set of unimodular toric ideals, where equality of these bases has numerous and beautiful consequences in hyperplane arrangements, resolutions, spectral sequences, and algebraic geometry, see \cite{BPS}. The fourth class are toric ideals whose indispensable binomials form a Graver basis; this class  is named by Sullivant \emph{strongly robust}, see \cite{Su}, since a priori it is just a subclass of robust toric ideals. The more technical name for strongly robust ideals, which are introduced in Section~\ref{sec:EqualBases}, is \emph{$\emptyset$-Lawrence}. However, it is conjectured that all robust toric ideals are minimally generated by their Graver bases, see \cite[Question 6.1]{BBDLMNS}, in other words, that all robust toric ideals are strongly robust. As we will see below, bouquets are exactly what can be used to classify toric ideals that belong to this class. Here, all  five sets  of binomials  from the diagram except $\mathcal C(A)$ are equal, which is expected as second Lawrence liftings are a special case. Finally, the fifth class of toric ideals are those for which  the set of indispensable binomials equals the universal Markov basis, that is, they have a unique minimal system of generators. A proper subclass of these are generic toric ideals \cite{PS}, where all binomials in a minimal generating set have full support.

\subsection*{The fundamental construction: bouquets} 
Every integer matrix $A$ comes equipped with a natural oriented matroid structure called the \emph{bouquet graph} of $A$, see Definition~\ref{bouq_def}. Connected components $B_1,\ldots,B_s$ of this graph are the bouquets of $A$ (and, in addition, we distinguish different types of bouquets: free or non-free, and the latter can be mixed or non-mixed; these are technical definitions that are not difficult to check and play a crucial role in the paper). Bouquets are encoded by vectors that, essentially, record the dependencies from the Gale transform of $A$: the bouquet-index-encoding vectors $\cb_{B_i}$ and the vectors $\ab_{B_i}$ whose columns make up the defining matrix  $A_B$ (cf. Definition~\ref{defn:a_b}) of the \emph{bouquet ideal} associated to $I_A$.
This new toric  ideal  encodes the basic properties of $I_A$ via a structural decomposition of $A$. 
The terminology we use comes from the applications to hypergraphs (see  \cite[Section 2]{PTV}). The graph's connected components alone were used  in \cite{BDR}, under the name \emph{coparallelism classes}, to provide combinatorial descriptions of self-dual projective toric varieties  associated to a non-pyramidal configuration, see \cite[Theorem 4.16]{BDR}. However, the extra information we give on the types of bouquets was essential in \cite{TV} to give more insight on the combinatorics of the self-dual projective toric varieties and to describe them completely.

We ask -- and answer in various ways -- the following question:  What does the bouquet structure of $A$ say about the toric ideal? In particular, we are interested in how bouquets preserve three types of properties of a toric ideal: 1) ``all'' of its combinatorial properties, that is, the structure of $\Ker_{\ZZ}(A)$, Gr\"obner, Graver, Markov bases, circuits, indispensable binomials, etc.; 2)  ``essential" combinatorics, that is,  $\Ker_{\ZZ}(A)$, the Graver basis, and  the circuits; and 3) homological information, i.e., the minimal free resolution. The following result summarizes how bouquets preserve the essential combinatorics of toric ideals. 

\smallskip
\noindent\textbf{Theorem A} (Theorems \ref{gen_iso_kernels} and \ref{gen_all_is_well}) \textit{Let $A=[{\bf a}_1,\dots,{\bf a}_n]\in\ZZ^{ m\times n}$ and its bouquet matrix $A_B=[\ab_{B_1},\dots,\ab_{B_s}]$. There is a bijective correspondence between the elements  
of $\Ker_{\ZZ}(A)$ in general,  and $\Gr(A)$ and $\mathcal C(A)$ in particular,  and the elements of $\Ker_{\ZZ}(A_B)$, and $\Gr(A_B)$ and $\mathcal C(A_B)$, respectively. More precisely, this correspondence is defined as follows: for ${\bf u}=(u_1,\ldots,u_{s})\in\Ker_{\ZZ}(A_B)$ then $B({\bf u})={\bf c}_{B_1}u_1+\cdots+{\bf c}_{B_s}u_s\in\Ker_{\ZZ}(A)$. }
\smallskip

In particular, Theorem A solves \cite[Problem 6.3]{PeSt} for an arbitrary toric ideal; the problem was posed for 0/1 matrices.  The basic idea is that the bouquet construction gives a way for recovering  all Graver bases elements from the Graver bases elements of a matrix with possibly fewer columns than $A$.  Replacing bouquets with subbouquets, see Remark~\ref{subbouquets} for definition, Theorem A is still valid and used very often throughout the paper. Since any (sub)bouquet $B$ of a matrix $A$ is encoded by two vectors $\ab_B$ and $\cb_B$ it is natural to ask whether for a given set of vectors $\ab_i$ and $\cb_i$ that can act as bouquet-index-encoding vectors there exists a matrix $A$  whose (sub)bouquets are encoded by the given set of vectors. The answer is yes and given by the following:        

\smallskip
\noindent\textbf{Theorem B} (Theorem~\ref{inverse_construction})
Let $\{\ab_1,\ldots,\ab_s\}\subset\ZZ^m$ be an arbitrary set of vectors. Let $\cb_1,\ldots,\cb_s$ be any set of primitive vectors, with $\cb_i\in\ZZ^{m_i}$ for some $m_i\geq 1$, each having full support and a positive first coordinate.  Then, there exists a matrix $A$ with the subbouquet decomposition $B_1,\ldots,B_s$, such that the $i^{th}$ subbouquet	is encoded by the following vectors: $\ab_{B_i}=(\ab_i,{\bf 0},\ldots,{\bf 0})$ and $\cb_{B_i}=({\bf 0},\ldots,\cb_i,\ldots,{\bf 0})$, where the support of $\cb_{B_i}$ is precisely in the $i^{th}$ block, of size $m_i$. 
\smallskip

One natural application of this inverse construction is that provides via Theorem A infinitely many examples of any of the 5 special classes of toric ideals from the diagram. In addition, based on the structural result on bouquets captured in Theorem A and Corollary~\ref{gen_no_free} of Theorem D, the inverse construction from  Theorem B is the fundamental tool  in \cite{TV} for describing all the defining matrices of self-dual projective toric varieties.

\subsection*{Combinatorial classification and some consequences}
As mentioned briefly above, non-free bouquets can be mixed or non-mixed, 
 and of course a toric ideal can have both in its bouquet graph. 
At one end of the spectrum 
 are the toric ideals with all of the non-free bouquets non-mixed; we call these ideals \emph{stable}. The notion of stability is quite nice, as it captures the case when passing from $I_A$ to $I_{A_B}$  preserves all combinatorial information:
  
\smallskip
\noindent\textbf{Theorem C} (Theorem \ref{stable_toric}) \textit{Let $I_A$ be a stable toric ideal. Then the bijective correspondence between the elements of $\Ker_{\ZZ}(A)$ and $\Ker_{\ZZ}(A_B)$ given by ${\bf u}\mapsto B({\bf u})$, is preserved when we restrict to any of the following sets: Graver basis, circuits, indispensable binomials, minimal Markov bases, reduced Gr\"obner bases (universal Gr\"obner basis).}
\smallskip

\noindent Furthermore, in the positively graded case, even the homological information 
is preserved; see Section~\ref{sec:OnStableToricIdeals}, which  is dedicated to stable toric ideals,  and Theorem~\ref{all_homological_stable} in particular. In addition, combining it with Theorem B gives, to our knowledge, a first way of constructing infinite classes of generic toric ideals, providing a partial answer to the open question posed at the end of \cite[Section 9.4]{MS} to find a deterministic construction of generic lattice ideals with prescribed properties, see Theorem~\ref{generic_preserved} and Remark~\ref{generic_construct}.

\smallskip
At the other end of the spectrum of the classification are toric ideals all of whose bouquets are mixed.  
In particular, such toric ideals are $\emptyset$-Lawrence
; however,  the  converse does not hold: there exist $\emptyset$-Lawrence toric ideals that have both mixed and non-mixed bouquets. 
Thus to capture the correct property we are interested in, we introduce the \emph{$S$-Lawrence property} that, intuitively, ``balances" from a trivial condition, namely being $[n]$-Lawrence, common to all toric ideals in $K[x_1,\ldots,x_n]$, to a very special class of ideals, those that are $\emptyset$-Lawrence or, in other words, strongly robust. The main result in this direction explains how mixed bouquets capture essential combinatorics of $A$. 

\smallskip
\noindent\textbf{Theorem D} (Theorem~\ref{graver=indisp}) {\it Let $B_1, \dots, B_s$ be the bouquets of $A=[{\bf a}_1,\dots,{\bf a}_n]$, and define $A_B=[\ab_{B_1},\dots,\ab_{B_s}]$. Let $S\subset [s]$ be the subset of coordinates corresponding to the mixed bouquets. Then the following are equivalent:
\\\noindent $(a)$ There exists no element in the Graver basis of $A_B$ which has a proper semiconformal decomposition that is conformal on the coordinates corresponding to $S$; 
\\\noindent $(b)$ The toric ideal of $A$ is  strongly robust;
\\\noindent $(c)$ The toric ideal of $A_B$ is $S$-Lawrence. }
\smallskip

\noindent In particular, all five sets of $I_A$ from the diagram except $\mathcal C(A)$}  coincide  if and only if the toric ideal of $A_B$ is $S$-Lawrence (Definition~\ref{S-Lawrence}), where $S$ is encoded by coordinates of the mixed bouquets of $A$. Note  that the equality $\MS(A)=\mathcal U(A)=\MM(A)=\Gr(A)$ provides, in particular, a combinatorial characterization of toric ideals for which  $\MS(A)$ and $\Gr(A)$ are equal. The question about this equality is a long-standing open problem, although  many examples have been discovered \cite{BPS,BHP,CTV2,St}.  Note that the combinatorial characterization provided by Theorem D describes a subclass of robust toric ideals or in the best case, if the answer of \cite[Question 6.1]{BBDLMNS} were positive, then it would be valid for all robust toric ideals. On the other hand, understanding better the strongly robust toric ideals seems to be closely related to their bouquet structure, since all the examples known so far have mixed bouquets. Computational evidence and partial positive answers to Question~\ref{strongly_mixed} indicate that it might happen that a toric ideal cannot be strongly robust unless its bouquet structure contains at least one mixed bouquet.


\subsection*{Applications of our results}
It is natural to ask how our results, which are valid for arbitrary integer matrices, specialize when restricting to $0/1$ matrices, that is, incidence matrices of hypergraphs. Such interest is justified by the special place of $0/1$ matrices in the world of toric ideals due to their  applications in various fields. To this end, \cite{PTV} provide several results. Namely, they identify some special types of bouquets for $0/1$ matrices, called bouquets with bases (see \cite[Definition 2.1]{PTV} for details). Applying the results from this paper, one can obtain a surprisingly non-obvious general behavior of hypergraphs \cite[Theorem 3.2]{PTV}. This result has nice consequences in integer programming, implying two universality results \cite[Corollaries 3.3, 3.4]{PTV} about the unboundedness of the degrees of all elements in a minimal Markov basis (and similarly about Graver basis, universal Gr\" obner basis, indispensable binomials respectively). Finally, the most important consequence of our results is a kind of polarization-type operation, see \cite[Theorem 4.2]{PTV}, which allows us, in particular, to answer problems like \cite[Question 6.1]{BBDLMNS}, for arbitrary toric ideals, by reducing them to toric ideals of hypergraphs. Details of those results are beyond the scope of the present manuscript. 
 
{\bf Acknowledgments.} The authors would like to thank Stavros Papadakis for pointing us to Theorem~\ref{all_homological_stable}, whose proof is similar to \cite[Proposition 6.5]{BP}.

 \section{ Bouquet decomposition of a toric ideal}
\label{sec:BouqDec} 
 
 
 
 Let $K$ be a field and $A=[\ab_1,\dots,\ab_n]\in\ZZ^{m\times n}$ be an integer matrix with  columns $\{\ab_i\}$. We recall that the toric ideal of $A$ is the ideal $I_A\subset K[x_1,\ldots,x_n]$ given by $$I_A=({\bf x}^{{\bf u}^+}-{\bf x}^{{\bf u}^-}: {\bf u}\in\Ker_{\ZZ}(A)),$$
where ${\bf u}={\bf u}^+-{\bf u}^-$ is the unique expression of an integral vector ${\bf u}$ as a difference of two non-negative integral vectors with disjoint support, see \cite[Section 4]{St}. As usual, we denote by ${\bf x}^{{\bf u}}$ the monomial $x_1^{u_1}\cdots x_n^{u_n}$, with ${\bf u}=(u_1,\ldots,u_n)$. Denote by $r$ the $\dim_{\QQ} \QQ(A)$. Fix a basis $G_1, G_2, \dots, G_{n-r}\in\ZZ^n$ for the lattice $\Ker_{\ZZ}(A)$, and denote by  $G$  the $n\times (n-r)$ matrix with columns $G_1,\dots,G_{n-r}$.  Define $G(A)=\{{\bf b}_1, \dots, {\bf b}_n\}$ to be the set of ordered rows of $G$. The set $G(A)$ is called the {\em Gale transform} of $A$, while the vector $G({\bf a}_i):={\bf b}_i$ is called the Gale transform of ${\bf a}_i$ for any $i$. 
 
To the columns of $A$ one can associate the oriented vector matroid $M_A$ (see \cite{BLSWZ} for details). The support of a vector ${\bf u}\in \ZZ^n$ is the set supp$({\bf u})=\{i|u_i\not=0\}\subset \{1, \dots, n\}$. A {\em co-vector} is any  vector of the form $({\bf u} \cdot {\bf a}_1, \dots,{\bf u} \cdot {\bf a}_n)$. A {\em co-circuit} of $A$ is any non-zero co-vector of minimal support.  A co-circuit with support of cardinality one is called a {\em co-loop}. We call the vector ${\bf a}_i$ {\em free}  if $\{i\}$ is the support of a co-loop. Since a co-loop is characterized by the property that it belongs to any basis of the matroid, a free vector ${\bf a}_i$  belongs to any basis of $M_A$. Equivalently, the Gale transform of a free vector, $G({\bf a_i})$ is equal to the zero vector, which means that $i$ is not contained in the support of any minimal  generator of the toric ideal $I_A$, or any element in the Graver basis. 

 Let $E_A$ be the set consisting of elements of the form $\{ {\bf a}_i, {\bf a}_j\}$ such that there exists a co-vector ${\bf c}_{ij}$ with support  $\{i, j\}$. We denote by $E_A^+$  the subset of $E_A$ where the co-vector is a co-circuit and the signs of the two nonzero coordinates of ${\bf c}_{ij}$ are distinct, and we denote by $E_A^-$ the subset of $E_A$ where the co-vector is a co-circuit and the signs of the two nonzero coordinates of ${\bf c}_{ij}$ are the same. Furthermore, we denote by $E_A^0$ the subset of $E_A$ where the co-vector is not a co-circuit. This implies that both ${\bf a}_i$ and ${\bf a}_j$ are free vectors. By definition, these three sets $E_A^+,  E_A^-,  E_A^0$ partition $E_A$.

\begin{Definition}\label{bouq_def}
{\em The {\em bouquet graph} $G_A$ of $I_A$ is the graph whose vertex set  is $\{{\bf a}_1,\dots, {\bf a}_n\}$ and edge set  $E_A$.  The {\em bouquets} of $A$ are the connected components of $G_A$.  If there are free vectors in $A$ they form one bouquet with all edges in $E_A^0$, which we call the {\em free bouquet}  of $G_A$. A bouquet which is not free is called non-free. A non-free bouquet is called {\em mixed} if it contains at least an edge from $E_A^-$, and {\em non-mixed} if it is either an isolated vertex or all of its edges are from $E_A^+$.} 
\end{Definition}

It follows from the co-circuit axioms of  oriented matroids that each bouquet of $A$ is a clique in $G_A$. The discussions preceding Definition~\ref{bouq_def} show that the free bouquet of $G_A$ consists of all $\ab_i$ such that $G(\ab_i)={\bf 0}$. Moreover, in the following Lemma we give an equivalent description of  non-free, mixed and non-mixed bouquets of $A$ in terms of the Gale transform $G(A)$. These descriptions are based on a well-known result about the characterization of co-circuits of cardinality two in terms of Gale transforms, whose proof is included for convenience of the reader. By slight abuse of notation, we identify vertices of $G_A$ with their labels; that is, $\ab_i$ will be used to denote vectors in the context of $A$ and $M_A$, and vertices in the context of $G_A$.   

\begin{Lemma}\label{bouq_check}
Suppose that not all of the columns $\ab_i$ of $A$ 
 are free. 
Then: 
\begin{enumerate}
\item[(a)] There exists a co-circuit of cardinality two with support $\{i,j\}$ if and only if  ${\bf a}_i,{\bf a}_j$ are not free and $G({\bf a}_i)=\lambda G({\bf a}_j)$ for some $\lambda\neq 0$.
\item[(b)] $B\subset A$ is a  non-free bouquet if and only if the vector space $<G({\bf a}_i)| {\bf a}_i\in B>$ has dimension one.
\item[(c)] The edge $\{{\bf a}_i,{\bf a}_j\}$ belongs to $E_A^+$ (respectively $E_A^-$) if and only if ${\bf a}_i,{\bf a}_j$ are not free and $G({\bf a}_i)=\lambda G({\bf a}_j)$ for some $\lambda>0$ (respectively $\lambda<0$).
\end{enumerate}
\end{Lemma}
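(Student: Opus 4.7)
The plan is to reformulate co-vectors in terms of the Gale transform and then read off each statement by direct linear algebra. The key identity is that for ${\bf v}\in\QQ^n$,
$$
{\bf v}\text{ is a co-vector of }M_A\iff \sum_{i=1}^n v_i\,G(\ab_i)={\bf 0}\text{ in }\QQ^{n-r}.
$$
Indeed, co-vectors are exactly the vectors in the row space of $A$ over $\QQ$, i.e., the orthogonal complement of $\Ker_{\QQ}(A)$, and since $\Ker_{\QQ}(A)$ is spanned by the columns of $G$, orthogonality to each column unfolds into the displayed relation on the rows $G(\ab_i)$ of $G$. In particular, the standard basis vector $e_i$ is a co-vector (hence a co-loop) if and only if $G(\ab_i)={\bf 0}$, recovering the description of free vectors already recorded in the paper.

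For part (a), applying this criterion to a candidate ${\bf v}$ with support exactly $\{i,j\}$ rewrites the condition as $v_iG(\ab_i)+v_jG(\ab_j)={\bf 0}$ with $v_i,v_j\neq 0$. A short case analysis yields the following trichotomy: if both $G(\ab_i),G(\ab_j)$ vanish, then ${\bf v}$ exists but $e_i$ has strictly smaller support, so ${\bf v}$ is not a co-circuit; if exactly one vanishes, the other coefficient is forced to zero, contradicting support $\{i,j\}$; and if both are non-zero, they must be proportional, say $G(\ab_i)=\lambda G(\ab_j)$ with $\lambda=-v_j/v_i\neq 0$, in which case ${\bf v}$ is automatically a co-circuit because neither $\ab_i$ nor $\ab_j$ is a co-loop. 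This establishes (a). Part (c) follows immediately from inspecting the sign of $\lambda=-v_j/v_i$: distinct signs of $v_i,v_j$ yield $\lambda>0$ (edge in $E_A^+$), and equal signs yield $\lambda<0$ (edge in $E_A^-$).

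For part (b), I first invoke (a) to observe that the vertex set of any subbouquet (clique in $G_A$) must be entirely free or entirely non-free: an edge between a free $\ab_i$ and a non-free $\ab_j$ would require a relation $v_iG(\ab_i)+v_jG(\ab_j)={\bf 0}$ with $v_j\neq 0$ and $G(\ab_i)={\bf 0}$, which forces $v_j=0$. For a non-free subbouquet $B$, part (a) then pairs every two vertices via $G(\ab_i)=\lambda_{ij}G(\ab_j)$ with $\lambda_{ij}\neq 0$, so $\dim\langle G(\ab_i):\ab_i\in B\rangle=1$. Conversely, if this span is one-dimensional, then no $G(\ab_i)$ in $B$ can vanish, for otherwise the clique would mix a free and a non-free vertex, contradicting the dichotomy above; hence all rows $G(\ab_i)$ for $\ab_i\in B$ are non-zero scalar multiples of each other, and (a) supplies every edge, making $B$ a non-free subbouquet.

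The essential bridge is the Gale-duality identity $\sum v_iG(\ab_i)={\bf 0}$; once this is in place, the three statements reduce to sign and proportionality bookkeeping. The one subtle point is the clique-consistency check in the proof of (b), which relies on invoking (a) to rule out subbouquets whose vertex sets straddle the free/non-free partition.
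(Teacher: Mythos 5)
Your reduction of everything to the identity ``${\bf v}$ is a co-vector iff $\sum_i v_iG(\ab_i)={\bf 0}$'' is sound, and your proofs of (a) and (c) are correct. The forward direction of (a) is essentially the paper's own computation (pairing the co-vector against the kernel basis $G_1,\dots,G_{n-r}$). For the converse of (a) you take a genuinely different route: the paper invokes the matroid-theoretic fact that co-circuits are the minimal sets meeting every basis of $M_A$ and derives a contradiction from a hypothetical basis avoiding both $\ab_i$ and $\ab_j$, whereas you exhibit the co-vector with support $\{i,j\}$ directly from the description of co-vectors as $\Ker_{\QQ}(A)^{\perp}$ and verify minimality simply from the absence of co-loops at $i$ and $j$. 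Your version is more elementary (no matroid basis axioms needed) and yields the relation of Remark~\ref{cocircuit_gale} as an immediate byproduct; the only point worth a sentence is that you work over $\QQ$ while the paper's co-vectors are integral, which is harmless because supports and signs are invariant under scaling.

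The one genuine gap is in the converse of (b). You claim that if $\dim\langle G(\ab_i)\mid \ab_i\in B\rangle=1$ then no $G(\ab_i)$ with $\ab_i\in B$ can vanish, ``for otherwise the clique would mix a free and a non-free vertex'' --- but that $B$ is a clique is precisely what you are trying to prove at that point, so the justification is circular. Moreover the implication is false as stated: take $B=\{\ab_i,\ab_j\}$ with $G(\ab_i)={\bf 0}\neq G(\ab_j)$; the span is one-dimensional, yet by your own free/non-free dichotomy $\{\ab_i,\ab_j\}\notin E_A$, so $B$ is not a subbouquet at all. (This looseness is inherited from the statement of (b) itself, which the paper dismisses as immediate from (a), and in every later application $B$ consists of non-free columns.) The repair is small: either restrict the ``if'' direction to sets $B$ containing no free column, or prove the sharper equivalence that $B$ is a non-free subbouquet iff all $G(\ab_i)$, $\ab_i\in B$, are non-zero and pairwise proportional --- after which your argument goes through verbatim. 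As written, however, the ``if'' half of (b) does not stand on its own.
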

\begin{proof}
Let $G$ be the $n\times (n-r)$ matrix with the column vectors $G_1,\ldots,G_{n-r}$, and whose set of row vectors is the Gale transform $G(A)$ of $A$. For simplicity set $t=n-r$, $G({\bf a}_i)={\bf b}_i=(b_{i1},\ldots,b_{it})$ for all $i\in\{1,\ldots,n\}$, and note that ${\bf a}_i$ not being free implies $G({\bf a}_i)\neq{\bf 0}$.  Definition of $G$ provides:
\begin{eqnarray}\label{gal}
b_{1k}{\bf a}_1+\cdots+b_{nk}{\bf a}_n={\bf 0} \ \text{ for all } \  k\in\{1,\ldots,t\}.  
\end{eqnarray}
First we show (a).   Assume  that there exists a co-circuit ${\bf c}_{ij}$ of cardinality two, with support $\{i,j\}$:  ${\bf c}_{ij}=(0,\ldots,c_i,\ldots,c_j,\ldots,0)$ and $c_i,c_j\neq 0$. This implies the existence of a vector ${\bf v}\in\ZZ^m$ such that ${\bf v}\cdot {\bf a}_k=0$ for any $k\neq i,j$, ${\bf v}\cdot {\bf a}_i=c_i$ and ${\bf v}\cdot {\bf a}_j=c_j$. Taking the dot product with ${\bf v}$ in (\ref{gal}) we obtain $b_{ik}c_i+b_{jk}c_j=0$ for all $k\in\{1,\ldots,t\}$. Therefore $G({\bf a}_i)=-\frac{c_j}{c_i}G({\bf a}_j)$, the desired conclusion. 

Conversely, let $G({\bf a}_i)=\lambda G({\bf a}_j)$ for some $\lambda\neq 0$. Since ${\bf a}_i,{\bf a}_j$ are not free we also have that $G({\bf a}_i), G({\bf a}_j)\neq{\bf 0}$. It is a basic fact in matroid theory, see \cite{O}, that the co-circuits of a matroid are the minimal sets having non-empty intersection with every basis of the matroid. Thus, in order to prove the existence of a co-circuit of cardinality two with support $\{i,j\}$, it is enough to prove 
 that 1) any basis of $M_A$ contains either ${\bf a}_i$ or ${\bf a}_j$, and 2) there are no co-loops with support $\{i\}$ or $\{j\}$. The latter  is automatically satisfied since ${\bf a_i},{\bf a}_j$ are not free. Assume by contradiction that there exists a basis of $<{\bf a}_1,\ldots,{\bf a}_n>$ which does not contain both ${\bf a}_i$ and ${\bf a}_j$. This implies that ${\bf a}_i=\sum_{k\neq i,j}\beta_k {\bf a}_k$. Thus, the vector ${\bf w}$ of this relation,  whose $j$-th, $i$-th, and $k$-th (for any $k\neq i,j$) coordinates are $0$, $-1$, and $\beta_k$ respectively, is a linear combination of $G_1,\ldots,G_t$. Therefore, ${\bf w}=\sum_{l=1}^t \gamma_lG_l$, and, in particular,
\[
-1=\sum_{l=1}^t \gamma_l b_{il} \ \text{ and } \ 0=\sum_{l=1}^t \gamma_l b_{jl}. 
\]    
This yields a contradiction, since by hypothesis $b_{il}=\lambda b_{jl}$ for any $l\in\{1,\ldots,t\}$. 

Note that (b) follows immediately from (a) and the definition of a bouquet. Finally, let $\{{\bf a}_i,{\bf a}_j\}$ be an edge in  $E_A\setminus E_A^{0}$. This implies that (a) holds, and from its proof we obtain  $\lambda=-\frac{c_j}{c_i}$. Hence (c) follows at once from the definition of $E_A^+$ and $E_A^-$.  \qed

\end{proof}

 Due to its importance for later sections, we isolate the following consequence of the proof of Lemma~\ref{bouq_check}.  

\begin{Remark}\label{cocircuit_gale}
\rm Suppose that $A=[\ab_1,\ldots,\ab_n]$ has a co-circuit $(0,\ldots,c_i,\ldots,c_j,\ldots,0)$ with support of cardinality at most two. Then we have $c_iG(\ab_i)+c_jG(\ab_j)={\bf 0}$.    
\end{Remark}

In particular, it follows from Lemma~\ref{bouq_check}(c) that if $\{{\bf a}_i,{\bf a}_j\}\in E_A^+$ and $\{{\bf a}_j,{\bf a}_s\}\in E_A^+$ then $\{{\bf a}_i,{\bf a}_s\}\in E_A^+$, while if $\{{\bf a}_i,{\bf a}_j\}\in E_A^+$ and $\{{\bf a}_j,{\bf a}_s\}\in E_A^-$ then $\{{\bf a}_i,{\bf a}_s\}\in E_A^-$. Combining  these considerations provides  an algorithm for computing  the bouquet graph of  a toric ideal through the computation of the Gale transform of $A$, as the following example illustrates.  
 
 \begin{Example}\label{bouq_construction}
 \rm 
 Let $A$ be the integer matrix 
\[ 
\left( \begin{array}{ccccccc}
1 & 1 & 1 & 0 & 0 & 0 & 0\\
1 & 0 & 0 & 1 & 1 & 0 & 0\\
0 & 1 & 0 & 1 & 0 & 1 & 0\\
0 & 0 & 1 & 0 & 1 & 1 & 0\\
0 & 0 & 0 & 0 & 0 & 0 & 3
\end{array} \right),
\]
with columns $\{{\bf a}_1,\ldots,{\bf a}_7\}$.
 A basis for $\Ker_{\ZZ}(A)$ is given by $(1, -1, 0, 0, -1, 1, 0)$ and $(1, 0, -1, -1, 0, 1, 0)$, and thus the Gale transform $G(A)$ of $A$ consists of the following seven vectors: $G({\bf a}_1)=(1, 1)$, $G({\bf a}_2)=(-1, 0)$, $G({\bf a}_3)=(0, -1)$, $G({\bf a}_4)=(0, -1)$, $G({\bf a}_5)=(-1, 0)$, $G({\bf a}_6)=(1, 1)$ and $G({\bf a}_7)=(0, 0)$.  Hence ${\bf a}_7$ is a free vector. So, the graph $G_A$ has the vertex set $\{{\bf a}_1,\ldots , {\bf a}_7\}$, and applying Lemma~\ref{bouq_check}(a) and (b), we see immediately that the edges of $G_A$ are $\{{\bf a}_1,{\bf a}_6\}$, $\{{\bf a}_2,{\bf a}_5\}$ and $\{{\bf a}_3,{\bf a}_4\}$. Therefore $G_A$ has three bouquets each consisting of a single edge and one additional free bouquet consisting of the isolated vertex ${\bf a}_7$. Moreover, since $G({\bf a}_1)=G({\bf a}_6)$, $G({\bf a}_2)=G({\bf a}_5)$ and $G({\bf a}_3)=G({\bf a}_4)$,  Lemma~\ref{bouq_check}(c) provides  $\{{\bf a}_1,{\bf a}_6\}, \{{\bf a}_2,{\bf a}_5\}, \{{\bf a}_3,{\bf a}_4\}\in E_A^+$. In this case, all of the non-free bouquets are non-mixed. 
\end{Example} 

In the remainder of this section, we will show that the bouquets of $A$ determine, in a sense, elements of $I_A$. Before stating the main results, the following technical definitions are needed.

 First, we define a \emph{bouquet-index-encoding vector} $\cb_B$ as follows\footnote{Remark on notation: definition of $\cb_B$ does depend on the matrix $A$, but reference to it is suppressed for ease of notation. The underlying matrix $A$ for the bouquet $B$  will always be clear from the context.}. If the bouquet $B$ is free then we set $\cb_B\in\ZZ^n$ to be any nonzero vector such that $\supp(\cb_B)=\{i: \ab_i\in B\}$ and with the property that the first nonzero coordinate is positive. For  a  non-free bouquet $B$ of $A$, consider the Gale transforms of the elements in $B$. All the elements are nonzero and pairwise linearly dependent, therefore there exists a nonzero coordinate $j$ in all of them. Let $g_j=\gcd(G({\bf a}_i)_j| \ {\bf a}_i\in B)$ and fix the smallest integer $i_0$ such that ${\bf a}_{i_0}\in B$. Let ${\bf c}_B$  be the vector in $\ZZ^n$ whose $i$-th coordinate  is $0$ if ${\bf a}_i\notin B$, and is $\varepsilon_{i_0j}G({\bf a}_i)_j/g_j$ if ${\bf a}_i \in B$, where $\varepsilon_{i_0j}$ represents the sign of the integer $G({\bf a}_{i_0})_j$. 
 Thus the $\supp({\bf c}_B)=\{i: {\bf a}_i\in B\}$. Note that the choice of $i_0$ implies that the first nonzero coordinate of ${\bf c}_B$ is positive. Since each ${\bf a}_i$ belongs to  exactly one bouquet the supports of the vectors ${\bf c}_{B_i}$ are pairwise disjoint. In addition, $\cup_i\supp(\cb_{B_i})=[n]$.  
 
\begin{Remark}\rm
 For a non-free bouquet $B$, the definition  of  vector ${\bf c}_B$ does not depend on the nonzero coordinate $j$ chosen.  Indeed, let $k\neq j$ be such that $G({\bf a}_{i_0})_j\neq 0$ and assume that the bouquet $B$ has $s+1$ vertices ${\bf a}_{i_0},\ldots,{\bf a}_{i_s}$. For sake of simplicity denote by $c_l:=G({\bf a}_{i_l})_j$ and $d_l:=G({\bf a}_{i_l})_k$ for all $0\leq l\leq s$. Then we have to prove that $\varepsilon_{i_0j}c_l/g_j=\varepsilon_{i_0k}d_l/g_k$ for all $l$. The case $s=0$ is obvious, so we may assume that $s\geq 1$. Since $B$ is a bouquet then $c_l/d_l=m/n$ for all $l$, for some relatively prime integers $m,n$. On the other hand $g_j=\gcd(c_0,\ldots,c_s)$ implies that $g_j=\lambda_0c_0+\cdots+\lambda_sc_s$ for some integers $\lambda_0,\ldots,\lambda_s$. From here we obtain $g_j n/m=\lambda_0d_0+\cdots+\lambda_sd_s\in\ZZ$, and since $g_k=\gcd(d_0,\ldots,d_s)$ then $g_k|g_j(n/m)$. Similarly, we obtain that $g_j|g_k(m/n)$ and consequently $g_jn=\pm g_km$, which implies $c_l/d_l=m/n=\pm g_j/g_k$. Since $g_j,g_k>0$, it is easy to see that $(\varepsilon_{i_0j}c_l)/(\varepsilon_{i_0k}d_l)=g_j/g_k$, and therefore we obtain the desired equality. 
  \end{Remark}
Whether the bouquet $B$ is mixed or not can be read off from the vector $\cb_B$ as follows. 
 
\begin{Lemma}\label{mixed_bouquet}
Suppose $B$ is a  non-free bouquet of $A$. 
Then $B$ is a mixed bouquet if and only if the vector ${\bf c}_B$ has a negative and a positive coordinate. 
\end{Lemma}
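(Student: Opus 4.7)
The plan is to translate the condition ``$B$ is mixed'' into a statement about the signs of the entries of $\cb_B$ by passing through the Gale transforms, using Lemma~\ref{bouq_check}(b) and (c) as the main tools.

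First, since $B$ is non-free, Lemma~\ref{bouq_check}(b) says that $\langle G(\ab_i) : \ab_i\in B\rangle$ is one-dimensional. So, choosing the smallest index $i_0$ with $\ab_{i_0}\in B$ as in the definition of $\cb_B$, I can write
\[
G(\ab_i)=\mu_i\, G(\ab_{i_0}) \quad \text{for every } \ab_i\in B,
\]
with $\mu_{i_0}=1$ and all $\mu_i\neq 0$ (since $\ab_i$ is not free).

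Next, I would unpack the definition of $\cb_B$ to show that for every $i$ with $\ab_i\in B$, the sign of the $i$-th coordinate of $\cb_B$ equals the sign of $\mu_i$. Indeed, for the fixed coordinate $j$ with $G(\ab_{i_0})_j\neq 0$ we have
\[
(\cb_B)_i \;=\; \varepsilon_{i_0 j}\,\frac{G(\ab_i)_j}{g_j} \;=\; \mu_i\cdot\frac{\varepsilon_{i_0 j}\,G(\ab_{i_0})_j}{g_j},
\]
and the scalar $\varepsilon_{i_0 j}G(\ab_{i_0})_j/g_j=|G(\ab_{i_0})_j|/g_j$ is strictly positive by the choice of $\varepsilon_{i_0 j}$ and since $g_j>0$. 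Thus $\mathrm{sign}((\cb_B)_i)=\mathrm{sign}(\mu_i)$ for all $\ab_i\in B$.

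Finally, I would finish by invoking Lemma~\ref{bouq_check}(c): an edge $\{\ab_i,\ab_j\}$ of $B$ lies in $E_A^-$ iff $G(\ab_i)=\lambda G(\ab_j)$ with $\lambda<0$, i.e.\ iff $\mu_i/\mu_j<0$, i.e.\ iff $\mu_i$ and $\mu_j$ have opposite signs. For the forward direction, if $B$ is mixed, pick such an edge and use Step 2 to conclude $\cb_B$ has coordinates of both signs. For the converse, if $\cb_B$ has a positive and a negative coordinate then there exist $i,j$ with $\mu_i>0$ and $\mu_j<0$; since every bouquet is a clique, $\{\ab_i,\ab_j\}$ is an edge of $B$, and the sign computation places it in $E_A^-$, so $B$ is mixed. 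The isolated-vertex case is trivial since then $\cb_B$ has a single nonzero (positive) entry and $B$ is non-mixed by definition. I do not expect a real obstacle here; the only point that requires care is tracking the sign factor $\varepsilon_{i_0 j}$, which is engineered precisely so that the equivalence $\mathrm{sign}((\cb_B)_i)=\mathrm{sign}(\mu_i)$ holds uniformly in $i$.
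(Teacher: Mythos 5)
Your proof is correct and follows essentially the same route as the paper: both arguments reduce everything to Lemma~\ref{bouq_check}(c) together with the sign conventions in the definition of $\cb_B$ (positivity of $g_j$ and the factor $\varepsilon_{i_0 j}$). You merely package the sign bookkeeping as the uniform identity $\mathrm{sign}((\cb_B)_i)=\mathrm{sign}(\mu_i)$, whereas the paper checks the product $(c_B)_1(c_B)_2<0$ directly for an edge in $E_A^-$ and notes the converse is analogous; the content is the same.
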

\begin{proof}
Let $B$ be a mixed bouquet and assume without loss of generality that ${\bf a}_1,{\bf a}_2\in B$ such that $\{{\bf a}_1,{\bf a}_2\}\in E_A^-$. Applying Lemma~\ref{bouq_check}(c) there exists $\lambda<0$ such that $G({\bf a}_{1})=\lambda G({\bf a}_{2})$. Since ${\bf a}_1\in B$ then ${\bf a}_1$ is not a free vector and thus there exists an integer $j$ such that $G({\bf a}_{1})_j\neq 0$. Then $$({c}_B)_1({c}_B)_2=\varepsilon_{1j}\frac{G({\bf a}_1)_j}{g_j}\varepsilon_{1j}\frac{G({\bf a}_2)_j}{g_j}=\lambda\frac{G({\bf a}_2)_j^2}{g_j^2}<0,$$ 
and this implies the desired conclusion. Here $(c_B)_1,(c_B)_2$ represent the first two coordinates of the vector ${\bf c}_B$. The converse follows immediately with a similar argument.  \qed

\end{proof}
It follows from Lemma~\ref{mixed_bouquet} and the definition of the vector ${\bf c}_B$ that if the non-free bouquet $B$ is non-mixed, then all nonzero coordinates of ${\bf c}_B$ are positive. 

The following vector now encodes the set of dependencies from the Gale transform, and, therefore, also all of the the essential bouquet information about $B$.  
 
\begin{Definition} \rm  \label{defn:a_b}
Let $B$ be a bouquet of $A$ and define 
\[ 
  \ab_B:=\sum_{i=1}^n (c_B)_i\ab_i,
\] 
where $(c_B)_i$ denotes the $i$-th coordinate of the vector ${\bf c}_B$. The set of all vectors ${\bf a}_B$ corresponding to all bouquets of $A$ will be denoted by $A_B$, and thought of as a matrix with columns $\ab_B$. 
\end{Definition}

Let us isolate two special cases. First, if $B$ consists of just an isolated vertex $\ab_i$, then, when $\ab_i$ is not free ${\bf a}_B={\bf a}_i$, while otherwise $\ab_B$ can be any  positive multiple of $\ab_i$. Second, if the bouquet graph of $A$ has no mixed bouquets, then all vectors ${\bf a}_B$ corresponding to the non-free bouquets $B$ are just positive linear combinations of the vectors ${\bf a}_i\in B$. Finally, note that passing from $A$ to $A_B$ might not affect the matrix, since it might happen that $A=A_B$; a trivial example is $A_B=(A_B)_B$.
  
\begin{Remark}\label{rmk:FreeBouqAndZeros}
\rm If $A$ has a free bouquet, say $B_s$, then it is easy to see that 
$$\Ker_{\ZZ}(A_B)=\{(u_1,\ldots,u_{s-1},0): \ (u_1,\ldots,u_{s-1})\in\Ker_{\ZZ}(A_{B'})\},$$
where $A_{B'}=[\ab_{B_1},\ldots,\ab_{B_{s-1}}]$. In particular, we notice that for a free bouquet $B$, even though there are infinitely many choices to define $\cb_B$ and thus implicitly $\ab_B$, $\Ker_{\ZZ}(A_B)$ and $\Ker_{\ZZ}(A)$ are independent of that choice.  
\end{Remark} 

The following construction is the key result of this section, providing a bijection between the kernels of the toric matrix $A$ and the bouquet matrix $A_B$ from Definition~\ref{defn:a_b}.

\begin{Theorem}\label{gen_iso_kernels}
Suppose the bouquets of $A=[{\bf a}_1,\dots,{\bf a}_n]$  are $B_1, \dots, B_s$, and define $A_B=[\ab_{B_1},\dots,\ab_{B_s}]$. 
There is a bijective correspondence between the elements of $\Ker_{\ZZ}(A)$ and the elements of $\Ker_{\ZZ}(A_B)$ given by the map $\ub\mapsto B(\ub)$, where for  ${\bf u}=(u_1,\ldots,u_{s})\in\Ker_{\ZZ}(A_B)$  $$B({\bf u}):={\bf c}_{B_1}u_1+\cdots+{\bf c}_{B_s}u_s.$$ 
\end{Theorem}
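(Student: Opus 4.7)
My plan is to verify that the map $B : \Ker_{\ZZ}(A_B) \to \ZZ^n$ defined by $\ub \mapsto \sum_{i=1}^s u_i \cb_{B_i}$ lands in $\Ker_{\ZZ}(A)$ and is a bijection. Well-definedness is a direct computation using the defining identity $A\cb_{B_i}=\ab_{B_i}$ from Definition~\ref{defn:a_b}: it yields $A\cdot B(\ub)=\sum_i u_i\ab_{B_i}=A_B\ub={\bf 0}$ whenever $\ub\in \Ker_{\ZZ}(A_B)$.

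Injectivity is immediate from the observation that the supports $\supp(\cb_{B_i})$ are pairwise disjoint and together cover $\{1,\dots,n\}$, since the bouquets partition the columns of $A$. Projecting the equation $B(\ub)=B(\ub')$ onto the coordinates indexed by $B_i$ gives $(u_i-u_i')\cb_{B_i}|_{B_i}={\bf 0}$, and since $\cb_{B_i}$ has no zero entry on its support, $u_i=u_i'$ for every $i$.

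The substance of the theorem is surjectivity. Given $\vb\in \Ker_{\ZZ}(A)$, I would write $\vb=\sum_{k=1}^{n-r}\mu_k G_k$ with $\mu_k\in\ZZ$, so that $v_i=G(\ab_i)\cdot\mu$ where $\mu=(\mu_1,\dots,\mu_{n-r})$. For a non-free bouquet $B_p$ with vertices indexed by $i_0,i_1,\dots$, Lemma~\ref{bouq_check}(b) provides scalars $\alpha_l\in\QQ^*$ with $G(\ab_{i_l})=\alpha_l G(\ab_{i_0})$, and tracing these ratios through the definition of $\cb_{B_p}$ yields $(c_{B_p})_{i_l}=\alpha_l (c_{B_p})_{i_0}$ as well; hence $v_{i_l}=\alpha_l v_{i_0}$ and the common ratio $u_p:=v_{i_0}/(c_{B_p})_{i_0}\in\QQ$ satisfies $v_{i_l}=u_p(c_{B_p})_{i_l}$ for every $l$. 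For a free bouquet $B_q$ the vanishing $G(\ab_i)={\bf 0}$ forces $v_i=0$ on $B_q$, so I set $u_q=0$. Assembling $\ub=(u_1,\dots,u_s)$ gives $B(\ub)=\vb$ bouquet by bouquet, and $A_B\ub=A\cdot B(\ub)=A\vb={\bf 0}$ places $\ub$ in $\Ker_{\ZZ}(A_B)$.

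The only delicate point, and what I expect to be the main obstacle, is verifying that each $u_p$ above is an integer rather than merely a rational number. This is precisely where the normalization built into the definition of $\cb_{B_p}$ becomes crucial: the nonzero entries $(c_{B_p})_i$ were chosen so as to have gcd equal to $1$, so Bezout furnishes integers $\beta_i$ with $\sum_{\ab_i\in B_p}\beta_i(c_{B_p})_i=1$. Multiplying this identity by $u_p$ and using $u_p(c_{B_p})_i=v_i\in\ZZ$ gives $u_p=\sum_i \beta_i v_i\in\ZZ$, completing the argument.
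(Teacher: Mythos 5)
Your proof is correct, and its skeleton is the same as the paper's: both arguments expand a given $\vb\in\Ker_{\ZZ}(A)$ in the lattice basis $G_1,\dots,G_{n-r}$, observe that within each non-free bouquet the coordinates of $\vb$ are proportional to the corresponding coordinates of $\cb_{B_p}$ (via the one-dimensionality of the Gale transforms from Lemma~\ref{bouq_check}), and then check the equivalence $\ub\in\Ker_{\ZZ}(A_B)\iff B(\ub)\in\Ker_{\ZZ}(A)$ by the linear computation $A\,B(\ub)=A_B\ub$. The one place where you genuinely diverge is the integrality of the coefficients $u_p$: the paper never passes through $\QQ$, because its bookkeeping produces the explicit integer formula $u_p=\sum_j\lambda_j\varepsilon_{i_0j}g_j$ directly from the identities $G(\ab_k)_j=\varepsilon_{i_0j}g_j(c_{B_p})_k$; you instead define $u_p$ as a rational ratio and recover integrality from the primitivity of $\cb_{B_p}$ on its support (the entries are $\varepsilon_{i_0j}G(\ab_i)_j/g_j$ with $g_j$ the gcd, so their gcd is $1$) together with Bezout. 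Both are sound; the paper's route is slightly more self-contained, while yours isolates cleanly the structural fact that makes the lattice (not just the $\QQ$-vector space) correspondence work, namely that each $\cb_{B_p}$ is primitive on its bouquet. You also handle the free bouquet inside the argument by forcing $u_q=0$ (legitimate, since $G(\ab_i)={\bf 0}$ there forces $v_i=0$), whereas the paper discards free bouquets up front via Remark~\ref{rmk:FreeBouqAndZeros}; and you spell out injectivity from the disjointness of the supports, which the paper leaves implicit.
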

\begin{proof}
 We may assume that $A$ has no free  bouquet by Remark~\ref{rmk:FreeBouqAndZeros}. 
 First we show that for every ${\bf v}=(v_1,\ldots,v_n)\in\Ker_{\ZZ}(A)$ there exists a vector ${\bf u}\in \Ker_{\ZZ}(A_B)$ such that ${\bf v}=B({\bf u})$. In order to prove this we may assume, without loss of generality, that there exist integers $1\leq i_1<i_2<\cdots<i_{s-1}\leq n$ such that ${\bf a}_1,\ldots,{\bf a}_{i_1}$ belong to the bouquet $B_1$, and so on, until ${\bf a}_{i_{s-1}+1},\ldots,{\bf a}_n$ belong to the bouquet $B_s$. Since ${\bf v}\in\Ker_{\ZZ}(A)$ it follows that there exist integers $\lambda_1,\ldots,\lambda_{n-r}$ such that ${\bf v}=\lambda_1 G_1+\cdots+\lambda_{n-r} G_{n-r}$. In other words, for each $k=1,\ldots,n$,  
$$v_k=\lambda_1 G({\bf a}_k)_1+\cdots+\lambda_{n-r}G({\bf a}_k)_{n-r}.$$

From the formula of the vectors ${\bf c}_{B_k}$ it follows that  $(c_{B_1})_1=\varepsilon_{1j} G({\bf a_1})_j/g_j$ for all nonzero $g_j=\gcd(G({\bf a}_1)_j,\ldots,G({\bf a}_{i_1})_j)$. Therefore, $G({\bf a_1})_j=(c_{B_1})_1\varepsilon_{1j}g_j$ for all $j=1,\ldots,n-r$, if we put $g_j=0$ when $G({\bf a}_1)_j=0$. Then we obtain $v_1=\sum_{j=1}^{n-r} \lambda_j G({\bf a}_1)_j=\sum_{j=1}^{n-r} \lambda_j\varepsilon_{1j}g_j(c_{B_1})_1$. Since ${\bf a}_1,\ldots,{\bf a}_{i_1}$ are in the same bouquet $B_1$, for any $k=1,\ldots,i_1$, it follows similarly that $G({\bf a}_k)_j=\varepsilon_{1j}g_j (c_{B_1})_k$ and  
$$v_k=\sum_{j=1}^{n-r}\lambda_jG({\bf a}_k)_j=\sum_{j=1}^{n-r}\lambda_j \varepsilon_{1j}g_j (c_{B_1})_k=u_1(c_{B_1})_k,$$
where $u_1=\sum_{j=1}^{n-r}\lambda_j \varepsilon_{1j}g_j$. This takes care of the first $i_1$ coordinates of ${\bf v}$, which correspond to the same bouquet $B_1$. The  argument for the remaining coordinates follows similarly.

Next we show that ${\bf u}\in\Ker_{\ZZ}(A_B)$ if and only if $B({\bf u})\in\Ker_{\ZZ}(A)$. Indeed, ${\bf u}=(u_1,\ldots,u_s)\in\Ker_{\ZZ}(A_B)$
is equivalent to $\sum_{k=1}^s u_k{\bf a}_{B_k}=0$. Replacing every vector ${\bf a}_{B_k}$ provides the following equivalent statement:  ${\bf u}\in\Ker_{\ZZ}(A_B)$ if and only if 
\[
0=\sum_{k=1}^s u_k{\bf a}_{B_k}=\sum_{k=1}^s u_k(\sum_{i=1}^n (c_{B_k})_i{\bf a}_i)=\sum_{i=1}^n (\sum_{k=1}^s u_k(c_{B_k})_i){\bf a}_i.
\] 
The latter sum being equal to zero is equivalent to 
\[
(\sum_{k=1}^s (c_{B_k})_1u_k,\sum_{k=1}^s (c_{B_k})_2u_k,\ldots,\sum_{k=1}^s (c_{B_k})_nu_k)\in\Ker_{\ZZ}(A), 
\]
or, more concisely, 
\[
\sum_{k=1}^s {\bf c}_{B_k}u_k=B({\bf u})\in\Ker_{\ZZ}(A),
\]
and the claim follows. \qed

\end{proof}

It is an immediate consequence of Theorem~\ref{gen_iso_kernels} that the ideals $I_A$ and $I_{A_B}$ have the same codimension, as the kernels of the two matrices have the same rank. 

\begin{Example}[Example~\ref{bouq_construction}, continued]
\label{c_vector}
{\em Let $A$ be the matrix from Example~\ref{bouq_construction}. It has three bouquets, $B_1,B_2,B_3$, with two vertices each: $\{{\bf a}_1,{\bf a}_6\}$, $\{{\bf a}_2,{\bf a}_5\}$ and  $\{{\bf a}_3,{\bf a}_4\}$, and  the isolated vertex $\ab_7$ as the free bouquet $B_4$. 
 Let us compute the  nonzero vectors ${\bf c}_{B_1},{\bf c}_{B_2},{\bf c}_{B_3}, \cb_{B_4}\in\ZZ^7$. For ${\bf c}_{B_1}$,  $j$ can be chosen either $1$ or $2$, while  $i_0=1$. Fix $j=1$. Thus $g_1=\gcd(G({\bf a}_1)_1,G({\bf a}_6)_1)=1$ and the nonzero coordinates of ${\bf c}_{B_1}$ are
\[
(c_{B_1})_1= \varepsilon_{11} \frac{G({\bf a}_1)_1}{g_1}=1, \ (c_{B_1})_6= \varepsilon_{11} \frac{G({\bf a}_6)_1}{g_1}=1. 
\]      
Hence ${\bf c}_{B_1}=(1,0,0,0,0,1,0)$ and ${\bf a}_{B_1}={\bf a}_1+{\bf a}_6=(1,1,1,1,0)$. Similarly it can be computed that ${\bf c}_{B_2}=(0,1,0,0,1,0,0)$, ${\bf a}_{B_2}={\bf a}_2+{\bf a}_5=(1,1,1,1,0)$, ${\bf c}_{B_3}=(0,0,1,1,0,0,0)$ and ${\bf a}_{B_3}={\bf a}_3+{\bf a}_4=(1,1,1,1,0)$,  while by definition  $\cb_{B_4}=(0,0,0,0,0,0,1)$ and $\ab_{B_4}=\ab_7=(0,0,0,0,3)$. Then $A_B$ consists of four vectors ${\bf a}_{B_1},{\bf a}_{B_2},{\bf a}_{B_3},{\ab_{B_4}}$, and consequently $\Ker_{\ZZ}(A_B)=\{(\alpha+\beta,-\alpha,-\beta,0)| \ \alpha,\beta\in\ZZ\}$. 
This encodes the vector
 $$B((\alpha+\beta,-\alpha,-\beta,0))=(\alpha+\beta){\bf c}_{B_1}-\alpha{\bf c}_{B_2}-\beta{\bf c}_{B_3}=(\alpha+\beta,-\alpha,-\beta,-\beta,-\alpha,\alpha+\beta,0),$$    
which is a generic element of $\Ker_{\ZZ}(A)$.} 
\end{Example}

Let $\bf u$, ${\bf w}_1,{\bf w}_2\in\Ker_{\ZZ}(A)$ be such that ${\bf u}={\bf w}_1 + {\bf w}_2$. Such a sum is a {\em conformal decomposition of}  ${\bf u}$, written as ${\bf u}={\bf w}_1 +_{c} {\bf w}_2$, if  ${\bf u}^+={\bf w}_1^+ + {\bf w}_2^+$ and ${\bf u}^-={\bf w}_1^- + {\bf w}_2^-$, see \cite{St}. If both ${\bf w}_1,{\bf w}_2$ are nonzero, the  decomposition is said to be {\em proper}.  
Recall the  equivalent definition of the Graver basis $\Gr(A)$ as  the (finite) set of nonzero vectors in $\Ker_{\ZZ}(A)$ for which there is no proper conformal decomposition (see, for example, \cite[Algorithm 7.2]{St}). Also, a vector ${\bf u}\in \Ker_{\ZZ}(A)$ is called a {\em circuit} of $A$ if  supp$({\bf u})$ is minimal with respect to inclusion and the coordinates of ${\bf u}$ are relatively prime. As usual, we denote by ${\mathcal C}(A)$ the set of circuits of $A$.

\begin{Theorem}\label{gen_all_is_well}
Let $A$ and $A_B$ be as in  Theorem~\ref{gen_iso_kernels}. Then there is a bijective correspondence between the Graver basis of $A_B$ and the Graver basis of $A$, and a similar bijective correspondence holds between the sets of circuits. Explicitly: 
\[ \Gr(A)=\{B({\bf u})| \ {\bf u}\in\Gr(A_B)\}\ \
 \text{and} \ \ {\mathcal C}(A)=\{B({\bf u})| \ {\bf u}\in{\mathcal C}(A_B)\}. \]
\end{Theorem}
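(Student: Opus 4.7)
The strategy is to leverage the $\ZZ$-linear bijection $\ub \mapsto B(\ub)$ from Theorem~\ref{gen_iso_kernels}. Since the supports $\supp(\cb_{B_k})$ are pairwise disjoint and cover $[n]$, a coordinate $i$ of $B(\ub)$ lies in a unique $\supp(\cb_{B_k})$ and equals $u_k(c_{B_k})_i$. In particular,
\[
\supp(B(\ub)) \;=\; \bigsqcup_{k \in \supp(\ub)} \supp(\cb_{B_k}),
\]
so the inclusion partial order on supports of elements in $\Ker_{\ZZ}(A_B)$ and in $\Ker_{\ZZ}(A)$ is preserved under $B$. This observation, together with a gcd computation and a sign-tracking argument, will yield both bijections.

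For the circuit claim, note first that for every non-free bouquet $B$ the gcd of the nonzero entries of $\cb_B$ is $1$, by construction (one divides the relevant Gale coordinates by their gcd $g_j$); the free-bouquet case is handled separately via Remark~\ref{rmk:FreeBouqAndZeros} or by the convention that such bouquets contribute no indices to the support of any kernel element. Consequently the gcd of all nonzero entries of $B(\ub)$ equals $\gcd(u_k : k \in \supp(\ub))$. If $\ub \in \mathcal{C}(A_B)$, then this gcd is $1$, hence $B(\ub)$ has relatively prime coordinates, and by the support-preservation displayed above together with the bijection of Theorem~\ref{gen_iso_kernels}, the support of $B(\ub)$ is also inclusion-minimal in $\Ker_{\ZZ}(A)$. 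The converse follows by the same reasoning run backwards.

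For the Graver basis claim, the key assertion is that $\ub = \ub_1 +_c \ub_2$ is a proper conformal decomposition in $\Ker_{\ZZ}(A_B)$ if and only if $B(\ub) = B(\ub_1) +_c B(\ub_2)$ is a proper conformal decomposition in $\Ker_{\ZZ}(A)$. For the forward direction, $\ZZ$-linearity of $B$ gives the sum, and at a coordinate $i \in \supp(\cb_{B_k})$ the values $u_{1,k}(c_{B_k})_i$ and $u_{2,k}(c_{B_k})_i$ share the sign of $(c_{B_k})_i$ times the (compatible) sign of $u_{1,k},u_{2,k}$, so no cancellation occurs; properness is preserved because $B$ is injective. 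For the reverse direction, write $B(\ub) = \vb_1 +_c \vb_2$ properly, pull back $\vb_l = B(\wb_l)$ by Theorem~\ref{gen_iso_kernels} to obtain $\ub = \wb_1 + \wb_2$, and fix any bouquet index $k$. Since $\cb_{B_k}$ has a positive first nonzero entry, there exists $i \in \supp(\cb_{B_k})$ with $(c_{B_k})_i > 0$; at this $i$, conformality of $\vb_1,\vb_2$ at $B(\ub)_i = u_k(c_{B_k})_i$ forces $w_{1,k}$ and $w_{2,k}$ to have the same sign as $u_k$ (or to vanish when $u_k = 0$, since otherwise $\vb_1$ and $\vb_2$ would contribute opposite nonzero values at $i$ and cancel, contradicting conformality). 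Hence the decomposition $\ub = \wb_1 +_c \wb_2$ is conformal and proper.

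The main obstacle is the reverse direction in the Graver argument: because of mixed bouquets the vectors $\cb_{B_k}$ may carry both positive and negative entries, so one cannot naively read off the signs of $w_{l,k}$ from a single coordinate without choosing $i$ carefully. The device above, namely exploiting the fact that $\cb_{B_k}$ always has at least one positive entry, handles this uniformly across mixed and non-mixed bouquets. Combining this equivalence of conformal decompositions with the injectivity of $B$ shows that the primitive (proper-conformal-free) elements of the two kernels correspond, giving $\Gr(A) = \{B(\ub) : \ub \in \Gr(A_B)\}$.
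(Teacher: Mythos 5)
Your proposal is correct and follows essentially the same route as the paper: the paper's proof also rests on the linearity of $\ub\mapsto B(\ub)$, the disjointness of the supports of the $\cb_{B_k}$, the equivalence $\ub=\vb+_c\wb \iff B(\ub)=B(\vb)+_c B(\wb)$, and the support formula $\supp(B(\ub))=\bigcup_{k\in\supp(\ub)}\supp(\cb_{B_k})$. You merely spell out details the paper leaves implicit, namely the sign-tracking needed for the reverse conformal implication in the presence of mixed bouquets and the gcd computation showing that primitivity of coordinates is preserved for circuits.
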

\begin{proof}
Without loss of generality we may assume, as before, that $A$ has $s$ non-free bouquets and there exist integers $1\leq i_1<i_2<\cdots<i_{s-1}\leq n$ such that ${\bf a}_1,\ldots,{\bf a}_{i_1}$ belong to the bouquet $B_1$, and so on, unti ${\bf a}_{i_{s-1}+1},\ldots,{\bf a}_n$ belong to the bouquet $B_s$. It follows from Theorem~\ref{gen_iso_kernels} that the Graver basis elements of $A$ are of the form $B({\bf u})$ with ${\bf u}\in\Ker_{\ZZ}(A_B)$. Moreover, for any ${\bf u}=(u_1,\ldots,u_s)\in\Ker_{\ZZ}(A_B)$, Theorem~\ref{gen_iso_kernels} implies that 
\begin{eqnarray}\label{all}
B({\bf u})=((c_{B_1})_1u_1,\ldots,(c_{B_1})_{i_1}u_1,\ldots,(c_{B_s})_{i_{s-1}+1}u_s,\ldots,(c_{B_s})_nu_s).
\end{eqnarray}
The bijective correspondence between the Graver basis elements follows at once since for any ${\bf u},{\bf v},{\bf w}\in\Ker_{\ZZ}(A_B)$,  we have ${\bf u}={\bf v}+_{c}{\bf w}$ if and only if $B({\bf u})=B({\bf v})+_{c}B({\bf w})$. 
Using Equation~\eqref{all} 
again provides that $\Supp(B({\bf u}))=\cup_{i\in\Supp({\bf u})}\Supp({\bf c}_{B_i})$. Thus, ${\bf u}$ is a circuit if and only if $B({\bf u})$ is a circuit.  \qed

\end{proof}

This correspondence in general does not offer any relation between Markov bases, indispensable binomials and universal Gr\"obner bases of $A$ and $A_B$. On the other hand, Section~\ref{sec:OnStableToricIdeals} shows that the correspondence preserves additional sets for the case of stable toric ideals (see Definition~\ref{stable_1}). 

\begin{Example}[Example~\ref{c_vector}, continued]
\rm
One easily checks that  $\Gr(A_B)=\mathcal{C}(A_B)=\{(1,-1,0,0),(0,1,-1,0),(1,0,-1,0)\}$, while $\Gr(A)=\mathcal{C}(A)$ and consists of the vectors $(1,-1,0,0,-1,1,0),(0,1,-1,-1,1,0,0),(1,0,-1,-1,0,1,0)$ , since $$B((u_1, u_2, u_3, u_4))=u_1\cb_{B_1}+u_2\cb_{B_2}+u_3\cb_{B_3}+u_4\cb_{B_4}=(u_1, u_2, u_3, u_3, u_2, u_1, u_4).$$ The one-to-one correspondence for the two sets holds in the order indicated. For example, 
\[
	B((0,1,-1,0))={\bf c}_{B_2}-{\bf c}_{B_3}=(0,1,-1,-1,1,0,0).
\] 
\end{Example}

\begin{Remark}\label{subbouquets}
{\em  A {\em subbouquet} of $G_A$ is an induced subgraph of $G_A$, which is a clique on its set of vertices. Note that a bouquet is a maximal subbouquet. We will say that $A$ has a {\em subbouquet decomposition} if there exists a family of subbouquets, say $B_1,\ldots,B_t$, such that they are pairwise vertex disjoint and their union of vertices equals $\{\ab_1,\ldots,\ab_n\}$. A subbouquet decomposition always exists if we consider, for example, the subbouquet decomposition induced by all of the bouquets. In addition, Theorem~\ref{gen_iso_kernels} and Theorem \ref{gen_all_is_well} are true even if we replace bouquets with proper subbouquets which form a subbouquet decomposition of $A$. The following sections utilize this when applying the two results.}
\end{Remark}

 We conclude this section with a rather surprising application of the bouquet construction. It will turn out that passing from a unimodular matrix $A$ to its (sub)bouquet matrix $A_B$ preserves unimodularity. To this end, recall that a matrix $A$ is called {\em unimodular} if all of its nonzero maximal minors have the same absolute value, see for example \cite[Section 8, page 70]{St}. A nice property of a unimodular matrix $A$ is that for its toric ideal $I_A$ the set of circuits equals the Graver basis (and thus also equals the universal Gr\"obner basis), and all coordinates of the Graver basis of $A$ belong to $\{0,-1,+1\}$, see for example \cite[Proposition 8.11]{St}.  

\begin{Proposition}\label{unimodular_ok}    
Suppose that $A=[\ab_1,\ldots,\ab_n]$ has the subbouquet decomposition $B_1,\ldots,B_s$ and let $A_B=[\ab_{B_1},\ldots,\ab_{B_s}]$. Then $A$ is unimodular if and only if $A_B$ is unimodular and all of the nonzero coordinates of the vectors $\cb_{B_1},\ldots,\cb_{B_s}$ are $+1$ or $-1$.
\end{Proposition}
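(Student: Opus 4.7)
My plan is to use the well-known characterization that an integer matrix $M$ is unimodular if and only if every element of its Graver basis lies in $\{0,\pm 1\}^{\text{(number of cols)}}$ (cf.\ \cite[Proposition 8.11]{St}). Combined with Theorem~\ref{gen_all_is_well}, which gives a bijection $\Gr(A_B)\leftrightarrow\Gr(A)$ via $\ub\mapsto B(\ub)=\sum_k u_k\cb_{B_k}$, this should reduce the proposition to an entrywise check: the $i$-th coordinate of $B(\ub)$, for $\ab_i\in B_k$, is the product $(c_{B_k})_i\,u_k$.

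The backward direction is then immediate: if $A_B$ is unimodular and each $\cb_{B_k}$ has entries in $\{0,\pm 1\}$, then every entry of $B(\ub)$ is a product of two integers in $\{0,\pm 1\}$ and hence itself lies in $\{0,\pm 1\}$, so $\Gr(A)\subseteq\{0,\pm 1\}^n$ and $A$ is unimodular. For the forward direction, I would fix a non-free subbouquet $B_k$ and argue that both factors in $(c_{B_k})_i u_k$ are individually $\pm 1$. First I need the existence of some $\ub\in\Gr(A_B)$ with $u_k\neq 0$: since $B_k$ is non-free, any $\ab_i\in B_k$ is non-free in $A$, so lies in the support of some $\vb\in\Gr(A)$; pulling $\vb$ back through the bijection yields $\ub\in\Gr(A_B)$ with $v_i=(c_{B_k})_i u_k\neq 0$. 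Noting that $(c_{B_k})_i\neq 0$ for every $\ab_i\in B_k$ in a non-free subbouquet (because the Gale transforms of the vertices of $B_k$ are nonzero scalar multiples of one another, hence share the same support), we conclude $u_k\neq 0$. Unimodularity of $A$ then gives $v_i\in\{\pm 1\}$, forcing both integer factors to satisfy $(c_{B_k})_i,u_k\in\{\pm 1\}$; running this over all $\ab_i\in B_k$ and all $\ub\in\Gr(A_B)$ with $u_k\neq 0$ yields simultaneously that the nonzero coordinates of $\cb_{B_k}$ are $\pm 1$ and that $\Gr(A_B)\subseteq\{0,\pm 1\}^s$. For a free subbouquet $B_k$ the analogue of Remark~\ref{rmk:FreeBouqAndZeros} already gives $u_k=0$ for every $\ub\in\Gr(A_B)$, and the definition of $\cb_{B_k}$ in the free case is flexible enough to be chosen with $\pm 1$ entries.

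The step I expect to be the main obstacle is confirming that the non-freeness of $B_k$ in $A$ genuinely forces $\ab_{B_k}$ to be non-free in $A_B$ (equivalently, forces the existence of a Graver element of $A_B$ with nonzero $k$-th coordinate). I would handle this by carefully chasing the bijection of Theorem~\ref{gen_iso_kernels}: non-freeness of $\ab_i\in B_k$ produces $\vb\in\Ker_\ZZ(A)$ with $v_i\neq 0$, and its preimage $\ub\in\Ker_\ZZ(A_B)$ necessarily has $u_k\neq 0$; a standard conformal decomposition then provides a Graver element of $A_B$ with nonzero $k$-th coordinate. The rest of the argument is essentially entrywise arithmetic modulo the characterization of unimodularity via the Graver basis.
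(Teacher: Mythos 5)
Your proof is correct and takes essentially the same route as the paper's: both reduce the claim, via the Graver-basis bijection $\ub\mapsto B(\ub)$ of Theorem~\ref{gen_all_is_well} and the pairwise disjoint supports of the $\cb_{B_k}$, to the characterization of unimodularity by all Graver (hence all universal Gr\"obner) elements having coordinates in $\{0,\pm1\}$ — just note that the converse half of that characterization comes from the squarefree-initial-ideal criterion \cite[Remark 8.10]{St} rather than \cite[Proposition 8.11]{St} alone. Your explicit verification that every non-free subbouquet $B_k$ is met by the support of some Graver element (so that some $\ub\in\Gr(A_B)$ has $u_k\neq 0$), together with the remark on free subbouquets, fills in a step the paper's proof leaves implicit.
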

\begin{proof}
Assume first that $A$ is unimodular. Then it follows from Theorem~\ref{gen_all_is_well} and \cite[Proposition 8.11]{St} that the set of circuits of $A_B$ equals the Graver basis of $A_B$. It is a known fact that a matrix is unimodular if and only if all initial ideals of its toric ideal are squarefree, see \cite[Remark 8.10]{St}. Thus, for proving the unimodularity of $A_B$, it is sufficient to show that the Graver basis of $A_B$, hence also the universal Gr\"obner basis, consists of vectors with nonzero coordinates only $-1$ or $+1$. Let ${\bf u}=(u_1,\ldots,u_s)$ be a Graver basis element of $A_B$. Then $B({\bf u})=\sum_{i=1}^s \cb_{B_i}u_i$ is a Graver basis element of $A$ and, since $A$ is unimodular, all coordinates of $B({\bf u})$ are $0,-1,1$. Because the vectors $\cb_{B_i}$ have the supports pairwise disjoint, the latter condition is fulfilled only if for all $i$ we have $u_i\in\{0,-1,1\}$ and the nonzero coordinates of $\cb_{B_i}$ are $1$ or $-1$. Thus we obtain the desired conclusion. For the converse, by Theorem~\ref{gen_all_is_well} the Graver basis elements of $A$ are of the form $B({\bf u})=\sum_{i=1}^s \cb_{B_i}u_i$ with ${\bf u}$ running over all Graver basis elements of $A_B$. Since $A_B$ is unimodular then ${\bf u}$ has all nonzero coordinates $+1$ or $-1$ and thus, by the hypothesis on $\cb_{B_i}$, it follows that $B({\bf u})$ has all nonzero coordinates $+1$ or $-1$. The unimodularity of $A_B$ implies via Theorem~\ref{gen_all_is_well} the equality of the set of circuits of $A$ and the Graver basis of $A$, and implicitly the equality with the universal Gr\"obner basis of $A$. Therefore the universal Gr\"obner basis of $A$ consists also of vectors with nonzero coordinates either $1$ or $-1$, and this implies that $A$ is unimodular.\qed  
     
\end{proof}

\section{ Generalized Lawrence matrices }
\label{sec:GeneralizedLawrenceMatrices}

 This section is dedicated to the construction of a natural inverse procedure of the one given in Section~\ref{sec:BouqDec}. Namely, given an arbitrary set of vectors  $\ab_1,\ldots,\ab_s$ and vectors $\cb_1,\ldots,\cb_s$ that can act as bouquet-index-encoding vectors (cf.\ definition of $\cb_B$ in Section~\ref{sec:BouqDec}), the following result constructs  a toric ideal $I_A$ whose $s$ subbouquets are encoded by the given vectors. 

Recall that an integral vector $\ab\in\ZZ^m$ is primitive if the greatest common divisor of all its coordinates is $1$.  

\begin{Theorem}\label{inverse_construction}
	Let $\{\ab_1,\ldots,\ab_s\}\subset\ZZ^m$ be an arbitrary set of vectors. Let $\cb_1,\ldots,\cb_s$ be any set of primitive vectors, with $\cb_i\in\ZZ^{m_i}$ for some $m_i\geq 1$, each having full support and a positive first coordinate. Define $p=m+\sum_{i=1}^s (m_i-1)$ and $q=\sum_{i=1}^s m_i$. Then, there exists a matrix  $A\in\ZZ^{p\times q}$ with a subbouquet decomposition, $B_1,\ldots,B_s$, such that the $i^{th}$ subbouquet	is encoded by  the following vectors: $\ab_{B_i}=(\ab_i,{\bf 0},\ldots,{\bf 0})\in\ZZ^{p}$ and $\cb_{B_i}=({\bf 0},\ldots,\cb_i,\ldots,{\bf 0})\in\ZZ^{q}$, where the support of $\cb_{B_i}$ is precisely in the $i^{th}$ block of $\ZZ^q$ of size $m_i$. 
\end{Theorem}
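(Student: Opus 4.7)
The plan is to build $A$ as a block matrix that realizes the prescribed encoding data. Write $A$ with its first $m$ rows formed by concatenating $\hat A_1,\ldots,\hat A_s$, where $\hat A_i\in\ZZ^{m\times m_i}$, and with the remaining $\sum_{i=1}^s(m_i-1)$ rows organized block-diagonally with blocks $\tilde A_i\in\ZZ^{(m_i-1)\times m_i}$. Since $\cb_i$ is primitive, Bezout's identity produces integers $\lambda_{i,1},\ldots,\lambda_{i,m_i}$ with $\sum_k\lambda_{i,k}(c_i)_k=1$, and I set $\hat A_i=\ab_i\cdot(\lambda_{i,1},\ldots,\lambda_{i,m_i})$, a rank-one block satisfying $\hat A_i\cb_i=\ab_i$. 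Because $\cb_i$ has full support, choosing $\tilde A_i$ to have rows $R_\ell=(c_i)_{\ell+1}e_\ell-(c_i)_\ell e_{\ell+1}\in\ZZ^{m_i}$ for $\ell=1,\ldots,m_i-1$ gives a rank $m_i-1$ matrix whose rational kernel is spanned by $\cb_i$; primitivity then forces $\Ker_{\ZZ}(\tilde A_i)=\ZZ\cb_i$. Label the columns of $A$ by pairs $(i,k)$, so that $\ab_{(i,k)}$ denotes the $k$-th column of the $i$-th block.

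The kernel of $A$ is the central computation. A vector $\ub=(\ub_1,\ldots,\ub_s)$ with $\ub_i\in\ZZ^{m_i}$ lies in $\Ker_{\ZZ}(A)$ if and only if $\tilde A_i\ub_i=0$ for each $i$ (forcing $\ub_i=t_i\cb_i$ for some $t_i\in\ZZ$) and $\sum_i\hat A_i\ub_i=\sum_i t_i\ab_i=0$. Hence $\Ker_{\ZZ}(A)=\{(t_1\cb_1,\ldots,t_s\cb_s):\sum_i t_i\ab_i=0\}$. Fixing a $\ZZ$-basis $v_1,\ldots,v_d$ of the lattice $L=\{(t_1,\ldots,t_s)\in\ZZ^s:\sum_i t_i\ab_i=0\}$, the vectors $(v_{j,1}\cb_1,\ldots,v_{j,s}\cb_s)$ form a $\ZZ$-basis of $\Ker_{\ZZ}(A)$. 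Reading the Gale transform off this basis gives $G(\ab_{(i,k)})=(c_i)_k\cdot(v_{1,i},\ldots,v_{d,i})$, so all Gale transforms inside the $i$-th block are scalar multiples of one fixed vector with scalings $(c_i)_k$.

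By Lemma~\ref{bouq_check}(b), each non-free $B_i=\{\ab_{(i,1)},\ldots,\ab_{(i,m_i)}\}$ is then a subbouquet. The normalization defining $\cb_B$ recovers the prescribed vectors: because $\cb_i$ is primitive, $g_j=|v_{j,i}|$ for any $j$ with $v_{j,i}\neq 0$, and because $i_0$ is the first column of the $i$-th block and $(c_i)_1>0$, one has $\varepsilon_{i_0j}=\mathrm{sign}(v_{j,i})$; the signs cancel, giving $(c_{B_i})_k=(c_i)_k$ exactly. If some $B_i$ consists entirely of free columns (the fixed vector $(v_{1,i},\ldots,v_{d,i})$ vanishes), the encoding vector is only required to be nonzero with the prescribed support and positive first coordinate, so $\cb_i$ is again a legal choice. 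Finally, $\ab_{B_i}=\sum_k(c_i)_k\ab_{(i,k)}$ equals $\ab_i$ in the top $m$ rows (by Bezout), equals $\tilde A_i\cb_i=0$ in the $i$-th diagonal block, and vanishes elsewhere, confirming $\ab_{B_i}=(\ab_i,\mathbf{0})$.

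The main delicate step is the gcd-and-sign bookkeeping in the third paragraph: it hinges on primitivity of $\cb_i$ and on $(c_i)_1>0$, and on $i_0$ being the first index of the $i$-th block, which is automatic from our column ordering. Everything else --- the kernel computation, the Gale transform, and the subbouquet recognition --- follows mechanically from Lemma~\ref{bouq_check} once the block structure is in place.
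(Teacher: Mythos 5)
Your proof is correct, and your matrix is essentially the paper's generalized Lawrence matrix: the top block $\hat A_i=\ab_i\cdot(\lambda_{i1},\ldots,\lambda_{im_i})$ is exactly the paper's $A_i$, while your relation block $\tilde A_i$ with rows $(c_i)_{\ell+1}e_\ell-(c_i)_\ell e_{\ell+1}$ is a harmless variant of the paper's $C_i$ (whose rows are $-c_{i,\ell+1}e_1+c_{i1}e_{\ell+1}$); both have rank $m_i-1$ because $\cb_i$ has full support, and integer kernel $\ZZ\cb_i$ by primitivity. Where you genuinely diverge is the verification. The paper never computes $\Ker_{\ZZ}(A)$: it exhibits, for each row of $C_1$, a co-vector of support $\{1,i\}$, invokes Remark~\ref{cocircuit_gale} to get the relations $-c_{1i}G(\beta_1)+c_{11}G(\beta_i)={\bf 0}$, and then runs a gcd-and-sign argument on ratios to pin down $\cb_{B_1}$. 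You instead compute $\Ker_{\ZZ}(A)=\{(t_1\cb_1,\ldots,t_s\cb_s):\sum_i t_i\ab_i=0\}$ directly, take the kernel basis induced by a $\ZZ$-basis of the relation lattice $L$, and read off $G(\ab_{(i,k)})=(c_i)_k(v_{1,i},\ldots,v_{d,i})$, after which Lemma~\ref{bouq_check}(b) identifies the subbouquets and the normalization $(c_{B_i})_k=(c_i)_k$ drops out in one line (your bookkeeping $g_j=|v_{j,i}|$ from primitivity and $\varepsilon_{i_0j}=\sgn(v_{j,i})$ from $(c_i)_1>0$, with $i_0$ the block's first column, is exactly right), and the free case and $\ab_{B_i}=(\ab_i,{\bf 0})$ (Bezout plus $\tilde A_i\cb_i={\bf 0}$) are immediate. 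The only thing you use tacitly is that the Gale transform in the definition of $\cb_{B_i}$ is taken with respect to a fixed but arbitrary basis of $\Ker_{\ZZ}(A)$, so fixing your convenient basis is allowed; since the subbouquet structure comes from co-circuits and $\cb_{B_i}$ is unchanged under a unimodular change of kernel basis, this is legitimate, though a sentence saying so would make the argument airtight. In exchange for that choice, your route is more elementary and more transparent, while the paper's co-circuit argument is basis-free and stays within the oriented-matroid language of Section~\ref{sec:BouqDec}.
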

\begin{proof}
We begin by constructing $A$ explicitly, and then show its subbouquets satisfy the required conditions.	For each $i=1,\dots,s$, let  $\cb_i=(c_{i1},\ldots,c_{im_i})\in\ZZ^{m_i}$ and define 
\[
C_i=
\left( \begin{array}{ccccc}
 -c_{i2} &  c_{i1} &   &  &\  \\
 -c_{i3} &  & c_{i1} &  &\  \\
& & &  \ddots &  \\
 -c_{im_i} &  &  &   &\ c_{i1}
\end{array} \right)
	 \in\ZZ^{(m_i-1)\times m_i}.
\]
	Primitivity of each $\cb_i$ implies that there exist integers $\lambda_{i1},\ldots,\lambda_{im_i}$ such that $1=\lambda_{i1}c_{i1}+\cdots+\lambda_{im_i}c_{im_i}$. Fix a choice of $\lambda_{i1},\ldots,\lambda_{im_i}$, and define the matrices $A_i=[\lambda_{i1}{\bf a}_i,\ldots,\lambda_{im_i}{\bf a}_i]\in\ZZ^{m\times m_i}$. 	The desired matrix $A$ is then the following block matrix: 
\[
A=\
\left( \begin{array}{cccc}
 A_1 & \ A_2 & \cdots & \ A_s \\
 C_1 &\ 0\ & \cdots & 0 \\
 0 &\ C_2\ & \cdots & 0 \\
\vdots&\vdots & \ddots & \vdots \\
 0 &\ 0 & \cdots &\ C_s
\end{array} \right)  \in \ZZ^{p\times q}, 
\]
where $p=m+(m_1-1)+\cdots+(m_s-1)$ and $q=m_1+\cdots+m_s$.	We will  denote the columns of the matrix $A$ by $\beta_1,\ldots,\beta_q$. 
	
	In the remainder of the proof, we show that $\beta_1,\ldots,\beta_{m_1}$ belong to the same subbouquet $B_1$. Analogous arguments, with a straightforward shift of the indices, apply to show that $\beta_{m_1+\cdots+m_{i-1}+1},\ldots,\beta_{m_1+\cdots+m_i}$ belong to the same subbouquet $B_i$ for all $i=2,\ldots,s$.  
	
	Consider the vector $\gamma_{m+i}\in\ZZ^p$, whose only nonzero coordinate is 1, in position $m+i-1$, for every i=$2,\ldots,m_1$. Then the co-vector  $$(\gamma_{m+i}\cdot\beta_1,\ldots,\gamma_{m+i}\cdot\beta_{q})=(-c_{1i},0,\ldots,c_{11},0,\ldots,0)$$ has support $\{1,i\}$, and by Remark~\ref{cocircuit_gale}, the following relation holds for any $2\leq i\leq m_1$:  
\[
	\tag{*}\label{*} -c_{1i}G(\beta_1)+c_{11}G(\beta_i)={\bf 0}. 
\]

Since all coordinates of ${\bf c}_1$ are nonzero, the previous relations imply that $\beta_1,\ldots,\beta_{m_1}$ belong to the same subbouquet,  which may or may not be free. Therefore $A$ has $s$ subbouquets.

	 Finally, we compute $\cb_{B_1}$. If  $B_1$ is free, then by definition we can take $\cb_{B_1}=(\cb_1,{\bf 0},\ldots,{\bf 0})$. Otherwise, if $B_1$ is not
	 free, then there exists a coordinate $j$ such that $G(\beta_i)_j\neq 0$ for all $i=1,\ldots,m_1$. Then the relations \eqref{*} provide
\[
\frac{G(\beta_1)_j}{c_{11}}=\ldots =\frac{G(\beta_{m_1})_j}{c_{1m_1}}=\frac{k}{l},
\]  
with relatively prime integers $k$ and $l$, $l>0$. Thus $lG(\beta_i)_j=c_{1i}k$, and consequently $l|c_{1i}$ for all $i$. But the coordinates of $c_1$ being relatively prime implies $l=1$ and thus $G(\beta_i)_j=c_{1i}k$ for all $i$. Therefore $g_j:=gcd(G(\beta_1)_j,\ldots,G(\beta_{m_1})_j)=|k|$. On the other hand, since $G(\beta_1)_j=c_{11}k$ and $c_{11}>0$ it follows that $\varepsilon_{1j}=\sgn(k)$, where $\varepsilon_{ij}$ is the sign of $G(\beta_i)_j$. Hence $G(\beta_i)_j=c_{1i}k=\varepsilon_{1j}c_{1i}g_j$, and by definition of $\cb_{B_1}$ we have $$(c_{B_1})_i=\varepsilon_{1j}\frac{G(\beta_i)_j}{g_{j}}=\varepsilon_{1j}\varepsilon_{1j}c_{1i}=c_{1i}, \ \text{ for all } i=1,\ldots,m_1.$$  
Thus $\cb_{B_1}=(\cb_1,{\bf 0},\ldots,{\bf 0})$, and $$\ab_{B_1}=\sum_{i=1}^{m_1}(c_{B_1})_i\beta_i=\sum_{i=1}^{m_1}c_{1i}\beta_i=(\ab_1,{\bf 0},\ldots,{\bf 0}),$$
as desired. Note that for the last equality we used that $\lambda_{11}c_{11}+\cdots+\lambda_{1m_1}c_{1m_1}=1$. \qed    

\end{proof}

\begin{Remark}\label{why_gen_lawrence}
{\em  We will call the matrix $A$ defined in Theorem~\ref{inverse_construction} the {\em generalized Lawrence matrix}, since in particular one can recover, after a column permutation, the classical second Lawrence lifting. Indeed, recall from \cite[Chapter 7]{St} that the second Lawrence lifting $\Lambda(D)$ of an integer matrix $D=[{\bf d}_1,\ldots,{\bf d}_n]\in\ZZ^{m\times n}$ is defined as
\[
\Lambda(D)=
\left(\begin{array}{cc}
 D &  0 \\
 I_n & I_n \\
\end{array}\right)\in\ZZ^{ (m+n)\times 2n}.
\]
Applying the construction of Theorem~\ref{inverse_construction} for the vectors $\ab_i={\bf d}_i$ and $\cb_i=(1,-1)$ for all $i=1,\ldots,n$, with $\lambda_{i1}=1$ and $\lambda_{i2}=0$, we obtain
\[
A_i=
\left(\begin{array}{cc}
 {\bf d}_i &  {\bf 0} \\
\end{array}\right)\in\ZZ^{m\times 2}, \quad C_i=\left(\begin{array}{cc}
 1 &  1 \\
\end{array}\right)\in\ZZ^{1\times 2},
\]
and thus $A$ is just $\Lambda(D)$ after a column permutation. }
\end{Remark}

As an immediate consequence of Theorem~\ref{inverse_construction} we obtain that generalized Lawrence matrices capture the ideal information for any general matrix $A$:

\begin{Corollary}\label{all_toric_is_gen_lawrence} 
  For any integer matrix $A$ there exists a generalized Lawrence matrix $A'$ such that $I_A=I_{A'}$, up to permutation of column indices.
\end{Corollary}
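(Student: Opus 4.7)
The plan is to construct $A'$ by applying Theorem~\ref{inverse_construction} directly to the bouquet data of $A$. First, I would take the bouquets $B_1,\ldots,B_s$ of $A$ together with their associated vectors $\ab_{B_i}\in\ZZ^m$ from Definition~\ref{defn:a_b}. For each $i$, define $\cb_i\in\ZZ^{m_i}$, where $m_i=|B_i|$, to be the restriction of $\cb_{B_i}$ to its support $B_i$, with coordinates listed in the natural order inherited from $\{1,\ldots,n\}$. The hypotheses of Theorem~\ref{inverse_construction} are quickly verified for $\cb_i$: full support and positive first coordinate are immediate from the definition of $\cb_{B_i}$ in Section~\ref{sec:BouqDec}, while primitivity follows in the non-free case because the nonzero coordinates have the form $\varepsilon_{i_0j}G(\ab_k)_j/g_j$ with $g_j=\gcd\{G(\ab_k)_j:\ab_k\in B_i\}$; in the free case one is free to pick $\cb_i$ primitive to begin with.

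Applying Theorem~\ref{inverse_construction} with inputs $\{\ab_{B_i}\}$ and $\{\cb_i\}$ produces a generalized Lawrence matrix $A'\in\ZZ^{p\times q}$. Since $q=\sum_i m_i=\sum_i|B_i|=n$, the matrices $A$ and $A'$ have the same number of columns. By construction, $A'$ admits a subbouquet decomposition $B'_1,\ldots,B'_s$ encoded by $\ab_{B'_i}=(\ab_{B_i},\mathbf{0})\in\ZZ^p$ and $\cb_{B'_i}=(\mathbf{0},\ldots,\cb_i,\ldots,\mathbf{0})\in\ZZ^q$, with $\cb_i$ occupying the $i$-th consecutive block of length $m_i$.

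To establish $I_A=I_{A'}$ up to a permutation of columns, I would apply Theorem~\ref{gen_iso_kernels} once to $A$ and once to $A'$, using its extension to subbouquet decompositions (noted in the remark immediately preceding Proposition~\ref{unimodular_ok}). On the $A$ side, $\Ker_{\ZZ}(A)$ is parametrized by $\Ker_{\ZZ}(A_B)$ through $\ub\mapsto\sum_i\cb_{B_i}u_i$, where $A_B=[\ab_{B_1},\ldots,\ab_{B_s}]$. On the $A'$ side, $\Ker_{\ZZ}(A')$ is parametrized by $\Ker_{\ZZ}(A'_B)$ through $\ub\mapsto\sum_i\cb_{B'_i}u_i$, where $A'_B=[\ab_{B'_1},\ldots,\ab_{B'_s}]$. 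Since $A'_B$ is obtained from $A_B$ by appending $p-m$ zero rows, one has $\Ker_{\ZZ}(A_B)=\Ker_{\ZZ}(A'_B)$, so both kernels are indexed by the same parameter set.

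The remaining step is to match the supports. Let $\sigma\colon\{1,\ldots,n\}\to\{1,\ldots,n\}$ be the permutation sending the $k$-th smallest element of $B_i$ to the $k$-th position of the $i$-th consecutive block of $A'$. Then $\sigma$ carries the entries of $\cb_{B_i}$ onto those of $\cb_{B'_i}$, so it intertwines the two parametrizations and yields $\Ker_{\ZZ}(A')=\sigma(\Ker_{\ZZ}(A))$, equivalently $I_{A'}=\sigma(I_A)$. The main potential obstacle is the bookkeeping of this permutation and the order within each bouquet; no further substantive content is needed beyond Theorems~\ref{inverse_construction} and~\ref{gen_iso_kernels}.
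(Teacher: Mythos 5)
Your proposal is correct and follows essentially the same route as the paper: feed the bouquet data $\{\ab_{B_i}\}$ and the primitive block vectors $\cb_i$ extracted from the $\cb_{B_i}$ into Theorem~\ref{inverse_construction}, then identify $\Ker_{\ZZ}(A)$ and $\Ker_{\ZZ}(A')$ via Theorem~\ref{gen_iso_kernels} (in its subbouquet form), since $A'_B$ is $A_B$ with zero rows appended and $\cb_{B'_i}$ matches $\cb_{B_i}$ up to the column permutation. The only cosmetic difference is that the paper permutes the columns of $A$ at the outset so that the supports are consecutive blocks, whereas you carry the permutation $\sigma$ through to the end; your explicit check of primitivity of $\cb_i$ is a detail the paper asserts without proof.
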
  
\begin{proof} 
Let $A=[\ab_1,\ldots,\ab_n]\in\ZZ^{m\times n}$, and assume that $A$ has $s$ bouquets $B_1,\ldots,B_s$. Then $A_B=[\ab_{B_1},\ldots,\ab_{B_s}]\in\ZZ^{m\times s}$,  
and $\cb_{B_1},\ldots,\cb_{B_s}\in\ZZ^n$  are such that their supports are pairwise disjoint and $\cup_{i=1}^s \supp(\cb_{B_i})=[n]$. After permuting the columns of $A$ we may assume that $\cb_{B_i}=({\bf 0},\ldots,\cb_i,\ldots,{\bf 0})$ for some $\cb_i\in\ZZ^{m_i}$ with full support (that is, $m_i=|\supp{\cb_{B_i}}|$) and primitive.  Theorem~\ref{inverse_construction} applied to the set of vectors $\{\ab_{B_1},\ldots,\ab_{B_s}\}$ and $\{\cb_1,\ldots,\cb_s\}$ provides the existence of a matrix $A'\in\ZZ^{p\times n}$ with $s$ subbouquets $B'_1,\ldots,B'_s$ and with the property that $\ab_{B'_i}=(\ab_{B_i},{\bf 0},\ldots,{\bf 0})$  and $\cb_{B'_i}=({\bf 0},\ldots,\cb_i,\ldots,{\bf 0})$ for all $i$.  Since  $\cb_{B'_i}=\cb_{B_i}$ for all $i=1,\ldots,s$,  Theorem~\ref{gen_iso_kernels} implies that $\Ker_{\ZZ}(A)=\Ker_{\ZZ}(A')$. \qed

\end{proof}

 The following is  an example  of how to apply Theorem~\ref{inverse_construction}.
\begin{Example}\label{how_inverse_proc_goes}
{\em Let $\cb_1=(2,23)$, $\cb_2=(3,-2,-4,2017)$ and $\cb_3=(11,-5,8)$ be three primitive vectors and $\ab_1=3$, $\ab_2=4$, $\ab_3=5$. Following the construction from the proof of Theorem~\ref{inverse_construction} we obtain that the matrix 
\[
A=[\ab'_1,\ldots,\ab'_9]=\
\left( \begin{array}{ccccccccc}
 105 & -9 & 36 & 20 & 16 & 0 & 30 & 25 & -25 \\
 -23 & 2 & 0 & 0 & 0 & 0 & 0 & 0 & 0 \\
 0 & 0 & 2 & 3 & 0 & 0 & 0 & 0 & 0 \\
 0 & 0 & 4 & 0 & 3 & 0 & 0 & 0 & 0 \\
 0 & 0 & -2017 & 0 & 0 & 3 & 0 & 0 & 0 \\
 0 & 0 & 0 & 0 & 0 & 0 & 5 & 11 & 0 \\
 0 & 0 & 0 & 0 & 0 & 0 & -8 & 0 & 11  
\end{array} \right)  \in \ZZ^{7\times 9}, 
\] 
for the choices of $\lambda_{11}=35, \lambda_{12}=-3$, $\lambda_{21}=9, \lambda_{22}=5, \lambda_{23}=4, \lambda_{24}=0$, and $\lambda_{31}=6, \lambda_{32}=5, \lambda_{33}=-5$, respectively. The matrix $A$ has a subbouquet decomposition induced by the three subbouquets $B_1,B_2,B_3${\blue , }  with the first one consisting of $\ab'_1,\ab'_2$, the second one consisting of $\ab'_3,\ab'_4,\ab'_5,\ab'_6${\blue , }  and the last one consisting of $\ab'_7,\ab'_8,\ab'_9$. The subbouquet ideal $I_{A_B}$ is just the toric ideal of the matrix $A_B=[\ab_{B_1},\ab_{B_2},\ab_{B_3}]$ with $\ab_{B_1}=(3,0,\ldots,0)\in\ZZ^7$, $\ab_{B_2}=(4,0,\ldots,0)\in\ZZ^7$ and $\ab_{B_3}=(5,0,\ldots,0)\in\ZZ^7$. }
\end{Example}

The most important property of Theorem~\ref{inverse_construction}, often exploited in the next sections, is that it can be used to provide infinite classes of examples. For example, if we want to construct infinitely many unimodular matrices we proceed as follows: let $D=[\ab_1,\ldots,\ab_s]$ be any unimodular matrix (for example the incidence matrix of a bipartite graph) and, based on Proposition~\ref{unimodular_ok} choose arbitrary $\cb_1,\ldots,\cb_s$ with entries $-1$ or $1$, and the first nonzero coordinate being $1$. Then Theorem~\ref{inverse_construction} yields the generalized Lawrence matrix $A$ such that its subbouquet ideal equals $I_D$, and finally by Proposition~\ref{unimodular_ok} we obtain that $A$ is also unimodular. Similarly, from an arbitrary unimodular matrix $D$, using any set of vectors $\cb_1,\ldots,\cb_s$ satisfying the hypotheses of Theorem~\ref{inverse_construction} and such that at least one has one coordinate in absolute value larger than 1, we can construct infinitely many generalized Lawrence matrices that are not unimodular, but have the set of circuits equal to the universal Gr\"obner basis and the Graver basis.

\begin{Remark}\label{inverse_graph}
\rm Theorem~\ref{inverse_construction} also solves the following two natural problems. First, given an arbitrary graph $G$ whose connected components are cliques, there exists a matrix $A$ such that the bouquet graph $G_A$ of $A$ is precisely $G$. Second, a stronger statement can be made: given a graph $G$ whose connected components are cliques, along with $+$ and $-$ signs associated to each edge of $G$ according to the sign rules explained following Remark~\ref{cocircuit_gale}, there exists a matrix $A$ whose bouquet graph and structure are encoded by $G$.
\end{Remark}

\section{On stable toric ideals}
\label{sec:OnStableToricIdeals}

In this section we prove that a certain bouquet structure of $A$, which we call \emph{stability}, provides many additional properties of the map ${\bf u}\mapsto B({\bf u})$ between $\Ker_{\ZZ}(A_B)$ and $\Ker_{\ZZ}(A)$. We will assume, throughout this section, that $A$ has no free bouquet. If $A$ has free bouquets, then all of the results in this section remain valid, since  free bouquets do not affect the kernels $\Ker_{\ZZ}(A)$ and $\Ker_{\ZZ}(A_B)$ by Remark~\ref{rmk:FreeBouqAndZeros} and Theorem~\ref{gen_iso_kernels}.

\begin{Definition}\label{stable_1}
{\em The toric ideal $I_A$ is called {\em stable} if all of the bouquets of $A$ are non-mixed. More generaly, the toric ideal $I_A$ is called {\em stable with respect to a subbouquet decomposition} of $A$ if there exists a subbouquet decomposition of $A$, such that all of the subbouquets are non-mixed.}
\end{Definition}

Note that there always exists a subbouquet decomposition such that $I_A$ is stable with respect to it: the trivial subbouquet decomposition, i.e. all subbouquets are isolated vertices. In general, there might be several different such subbouquet decompositions. However, there is a canonical (and maximal) subbouquet decomposition such that $I_A$ is stable with respect to it, see Remark~\ref{first_stable}. Still, $I_A$ can be stable with respect to a subbouquet decomposition, without being stable, see Example~\ref{general_decomposition}. On the other hand, a stable ideal $I_A$ is obviously stable with respect to the subbouquet decomposition given by the family of bouquets of $A$.
 
In the case of stable toric ideals the map ${\bf u}\mapsto B({\bf u})$ has the following additional property.   

\begin{Remark}\label{B(u)}
{\em If all of the bouquets of $A$ are non-mixed, then it follows from Lemma~\ref{mixed_bouquet} that the vectors $\cb_{B_1},\ldots,\cb_{B_s}$ have all nonzero coordinates positive. Then, by Theorem~\ref{gen_iso_kernels}, it follows that $B({\bf u})^+=B({\bf u}^+)$ and $B({\bf u})^-=B({\bf u}^-)$ for every ${\bf u}\in\Ker_{\ZZ}(A_B)$.}
\end{Remark}

Stability ensures that several of the properties of $I_A$ are preserved when passing to $I_{A_B}$. We begin with an easy, but crucial, Lemma.

\begin{Lemma}\label{mixed_same_positive}
Let $I_A$ be a stable toric ideal. Then $I_A$ is positively graded if and only if $I_{A_B}$ is positively graded.
\end{Lemma}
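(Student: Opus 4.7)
The plan is to leverage the bijection $\ub\mapsto B(\ub)$ between $\Ker_{\ZZ}(A_B)$ and $\Ker_{\ZZ}(A)$ established in Theorem~\ref{gen_iso_kernels}, combined with the sign-preservation property provided by stability in Remark~\ref{B(u)}. Recall that $I_A$ is positively graded precisely when $\Ker_{\ZZ}(A)\cap\NN^n=\{\bf 0\}$, so it suffices to prove that the bijection from Theorem~\ref{gen_iso_kernels} restricts to a bijection between $\Ker_{\ZZ}(A)\cap\NN^n$ and $\Ker_{\ZZ}(A_B)\cap\NN^s$.

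First, I would observe that since $I_A$ is stable, Remark~\ref{B(u)} gives $B(\ub)^-=B(\ub^-)$ for every $\ub\in\Ker_{\ZZ}(A_B)$. Hence
\[
B(\ub)\in\NN^n \ \iff\ B(\ub)^-={\bf 0}\ \iff\ B(\ub^-)={\bf 0}.
\]
Here $\ub^-$ is viewed as a vector in $\ZZ^s$ and $B$ as the linear map $\ub\mapsto \sum_{k=1}^s \cb_{B_k}u_k$; the identity $B(\ub)^-=B(\ub^-)$ relies only on all nonzero coordinates of each $\cb_{B_k}$ being positive, which is exactly Lemma~\ref{mixed_bouquet} applied to the non-mixed bouquets.

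Next, I would establish that the linear map $B\colon\ZZ^s\to\ZZ^n$ is injective. Since we assume throughout the section that $A$ has no free bouquet, each $\supp(\cb_{B_k})$ is nonempty, and these supports are pairwise disjoint; thus the vectors $\cb_{B_1},\ldots,\cb_{B_s}$ are linearly independent and $B$ has trivial kernel. In particular $B(\ub^-)={\bf 0}$ forces $\ub^-={\bf 0}$, i.e.\ $\ub\in\NN^s$. Combining with the previous chain of equivalences yields
\[
B(\ub)\in\NN^n \ \iff\ \ub\in\NN^s
\]
for every $\ub\in\Ker_{\ZZ}(A_B)$. Since $B$ is a bijection between $\Ker_{\ZZ}(A_B)$ and $\Ker_{\ZZ}(A)$ sending ${\bf 0}$ to ${\bf 0}$, this restricts to a bijection $\Ker_{\ZZ}(A_B)\cap\NN^s\longleftrightarrow\Ker_{\ZZ}(A)\cap\NN^n$, so one side equals $\{\bf 0\}$ if and only if the other does, which is the desired equivalence.

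There is no serious obstacle here: the statement is essentially a corollary of Theorem~\ref{gen_iso_kernels} and Remark~\ref{B(u)}, and the only small point requiring attention is justifying that the positivity of the nonzero coordinates of $\cb_{B_k}$ can be used both to pass $^+$ and $^-$ through $B$ \emph{and} to guarantee injectivity of $B$ as a map on all of $\ZZ^s$, which is why the assumption that $A$ has no free bouquet (adopted at the start of the section) is convenient.
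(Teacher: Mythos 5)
Your proof is correct and follows essentially the same route as the paper: the paper also deduces the lemma from the bijection of Theorem~\ref{gen_iso_kernels} together with the fact that the $\cb_{B_i}$ are nonzero with positive nonzero coordinates and pairwise disjoint supports, concluding directly that ${\bf 0}\neq B(\ub)\in\NN^n$ if and only if ${\bf 0}\neq\ub\in\NN^s$. Your detour through Remark~\ref{B(u)} and the injectivity of the linear map $B$ is just a slightly more elaborate packaging of that same one-line observation.
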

\begin{proof}
  Let  ${\bf v}\in\Ker_{\ZZ}(A)$. By Theorem~\ref{gen_iso_kernels} there exists a vector ${\bf u}$ such that ${\bf v}=B({\bf u})$ for some ${\bf u}\in\Ker_{\ZZ}(A_B)$. Therefore ${\bf v}=\sum_{i=1}^s {\cb_{B_i}}u_i$, and since every $\cb_{B_i}$ is a nonzero vector with all nonzero coordinates positive, and their supports are pairwise disjoint, then we obtain that ${\bf 0}\neq{\bf v}\in\NN^n$ if and only if ${\bf 0}\neq {\bf u}\in\NN^s$.  \qed

\end{proof}

 A {\em Markov basis} of $A$ is a finite subset $\MM$ of $\Ker_{\ZZ}(A)$ such that whenever ${\bf w}, {\bf v}\in \NN^{n}$ and ${\bf w}- {\bf v}\in \Ker_{\ZZ}(A)$  there exists a subset $\{{\bf u}_i: i=1,\ldots, r\}$ of  ${\MM}$ that {\it connects} ${\bf w}$ to ${\bf v}$.  Here, connectedness  means that  ${\bf w}- {\bf v}=\sum^r_{i=1}{\bf u}_i $, and $({\bf w}- {{\sum^p_{i=1}}}{{\bf u}_i})\in \NN^n$  for all $1\leq p\leq r$.
 
 A Markov basis $\MM$ is {\em minimal} if no proper subset of $\MM$ is a Markov basis. By a fundamental theorem from Markov bases literature (see \cite{DiSt}), a set of vectors is a Markov basis for $A$ if and only if the corresponding set of binomials (whose exponents are the given vectors) generate the toric ideal $I_A$.

\begin{Proposition}\label{stable_markov}
Let $I_A$ be a stable toric ideal. Then the map ${\bf u}\mapsto B({\bf u})$ is a bijective correspondence between the minimal Markov bases of $A_B$ and the minimal Markov bases of $A$.  
\end{Proposition}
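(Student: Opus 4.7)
The plan is to use the injective $K$-algebra homomorphism $\phi\colon K[y_1,\dots,y_s]\to K[x_1,\dots,x_n]$ defined by $\phi(y_k)={\bf x}^{\cb_{B_k}}$, which is well-defined by stability (since $\cb_{B_k}\in\NN^n$) and injective because the supports of the $\cb_{B_k}$ are pairwise disjoint. A direct check gives $\psi_A\circ\phi=\psi_{A_B}$, and the key consequence of stability from Remark~\ref{B(u)} is that $\phi({\bf y}^{{\bf u}^+}-{\bf y}^{{\bf u}^-})={\bf x}^{B({\bf u})^+}-{\bf x}^{B({\bf u})^-}$ for every ${\bf u}\in\Ker_{\ZZ}(A_B)$. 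Combined with Theorem~\ref{gen_iso_kernels}, this sets up a bijection between the binomial generators of $I_{A_B}$ and those of $I_A$; in particular, every binomial of $I_A$ lies in the subring $R:=\phi(K[y])$, and $I_A\cap R=\phi(I_{A_B})$.

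The forward direction is immediate: if $\MM'$ is a Markov basis of $A_B$ and ${\bf v}=B({\bf u})\in\Ker_{\ZZ}(A)$, writing ${\bf y}^{{\bf u}^+}-{\bf y}^{{\bf u}^-}$ as a $K[y]$-combination over $\MM'$ and applying $\phi$ gives a $K[x]$-expansion of ${\bf x}^{{\bf v}^+}-{\bf x}^{{\bf v}^-}$ over $B(\MM')$. The main obstacle is pulling such a $K[x]$-expansion back to $K[y]$. For this I would introduce the grading of $K[x]$ by the abelian group $M=\bigoplus_{k=1}^s\ZZ^{|B_k|}/\ZZ\cb_{B_k}$, in which a monomial ${\bf x}^\alpha$ has degree $(\overline{\alpha|_{B_k}})_k$. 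The degree-zero component equals $R$, and by stability every binomial ${\bf x}^{{\bf v}^+}-{\bf x}^{{\bf v}^-}$ with ${\bf v}\in\Ker_{\ZZ}(A)$ has $M$-degree zero, because ${\bf v}^{\pm}=B({\bf u}^{\pm})$ restricts to a non-negative multiple of $\cb_{B_k}$ on each $B_k$. Hence $I_A$ is $M$-graded with $(I_A)_0=\phi(I_{A_B})$, and taking the degree-zero part of any relation replaces each $h_{{\bf u}'}\in K[x]$ by its degree-zero component $h_{{\bf u}'}^0\in R$. Writing $h_{{\bf u}'}^0=\phi(g_{{\bf u}'})$ and invoking injectivity of $\phi$ lifts the relation to $K[y]$, showing $\MM'$ is Markov for $A_B$.

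Once the equivalence ``$\MM'$ is a Markov basis of $A_B$ if and only if $B(\MM')$ is a Markov basis of $A$'' is established, minimality transfers automatically: if $B(\MM')$ were not minimal, removing some $B({\bf u}_0)$ would still leave a Markov basis of $A$, and by the equivalence $\MM'\setminus\{{\bf u}_0\}$ would be Markov for $A_B$, contradicting minimality. The reverse direction is symmetric, completing the bijection between minimal Markov bases via ${\bf u}\mapsto B({\bf u})$.
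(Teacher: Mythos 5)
Your proof is correct, but it follows a genuinely different route from the paper. The paper argues combinatorially, directly with the connectivity definition of a Markov basis: using Remark~\ref{B(u)} (that $B({\bf u})^{\pm}=B({\bf u}^{\pm})$ under stability) it transfers a connecting sequence for ${\bf u}^+,{\bf u}^-$ in $\NN^s$ to one for $B({\bf u})^+,B({\bf u})^-$ in $\NN^n$ and back, and then transfers minimality through the resulting equivalence. You instead invoke the ideal-generation characterization of Markov bases (the Diaconis--Sturmfels fact the paper records before the proposition) and work with the monomial subring homomorphism $\phi\colon K[y]\to K[x]$, $y_k\mapsto {\bf x}^{\cb_{B_k}}$ --- exactly the map the paper only introduces later, in the proof of Theorem~\ref{all_homological_stable}. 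The forward direction (apply $\phi$ to a generating expression) is the easy half in both treatments; your real contribution is the pull-back step, done via the grading by $\bigoplus_k\ZZ^{|B_k|}/\ZZ\cb_{B_k}$, whose degree-zero component is $\phi(K[y])$ precisely because stability makes each $\cb_{B_k}$ nonnegative with full support on its bouquet; extracting degree-zero parts of an expansion and using injectivity of $\phi$ lifts it to $K[y]$. This is in effect a direct proof that $\phi^{-1}(I_A)=I_{A_B}$, i.e.\ the contraction statement that the paper later obtains from flatness, so your argument anticipates the homological Theorem~\ref{all_homological_stable} while remaining self-contained; the paper's argument is more elementary and stays entirely at the level of lattice points and fibers. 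One small imprecision: the claim that ``every binomial of $I_A$ lies in the subring $R$'' should be stated only for the pure-difference binomials ${\bf x}^{{\bf v}^+}-{\bf x}^{{\bf v}^-}$ with ${\bf v}\in\Ker_{\ZZ}(A)$ (monomial multiples of such binomials need not lie in $R$), but this is all your argument actually uses, and the minimality transfer at the end matches the paper's.
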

\begin{proof}
Let $\MM$ be a Markov basis of $A_B$, and $\MM'=\{B({\bf u}): \ {\bf u}\in\MM\}$. Let ${\bf w}=B({\bf u})\in\Ker_{\ZZ}(A)$, for some ${\bf u}\in\Ker_{\ZZ}(A_B)$, see Theorem~\ref{gen_iso_kernels}. Then by Remark~\ref{B(u)} it follows that ${\bf w}^+=B({\bf u}^+)$ and ${\bf w}^-=B({\bf u}^-)$. Since ${\bf u}\in\Ker_{\ZZ}(A_B)$ and $\MM$ is a Markov basis for $A_B$ then there exists a subset $\{{\bf u}_i: i=1,\ldots,r\}$ of $\MM$ such that ${\bf u}^+-{\bf u}^-=\sum^r_{i=1}{\bf u}_i$ and $({\bf u}^+- {{\sum^p_{i=1}}}{{\bf u}_i})\in \NN^s$ for all $1\leq p\leq r$. Thus
\[
{\bf w}^+-{\bf w}^-=B({\bf u})=B(\sum^r_{i=1}{\bf u}_i)=\sum_{i=1}^r B({\bf u}_i), 
\]
and 
\[
{\bf w}^+-\sum_{i=1}^p B({\bf u}_i)=B({\bf u}^+-\sum_{i=1}^p{\bf u}_i)
\]
belongs to $\NN^n$ by Remark~\ref{B(u)}, since the vector ${\bf u}^+-\sum_{i=1}^p{\bf u}_i\in\NN^s$ for all $1\leq p\leq r$. Therefore $\MM'$ is a Markov basis of $A$. Conversely, given any Markov basis $\MM'$ of $A$ then its elements are of the form $\{B({\bf u}): {\bf u}\in\MM\subset\Ker_{\ZZ}(A_B)\}$, see Theorem~\ref{gen_iso_kernels}. As before, using Remark~\ref{B(u)} it can be shown that $\MM$ is a Markov basis of $A_B$. This bijective correspondence ensures also that the map ${\bf u}\mapsto B({\bf u})$ preserves the minimality of the Markov bases.  Indeed, if $\MM$ is a minimal Markov basis of $A_B$ then $\MM'=\{B({\bf u}): \ {\bf u}\in\MM\}$ is a Markov basis of $A$, and if it were not minimal then a proper subset $\MM_1'$ of it would be minimal, and thus we would obtain that a proper subset of $\MM$ would be a Markov basis of $A_B$, a contradiction. \qed

\end{proof}

The intersection of all (minimal) Markov bases of $A$ via the identification of the elements which differ by a sign is called the set of {\it indispensable  binomials} of $A$, and denoted by $\MS(A)$. In order to determine the indispensable binomials one has to deal with two cases: $\Ker_{\ZZ}(A)\cap\NN^n\neq\{\bf 0\}$ or $\Ker_{\ZZ}(A)\cap\NN^n=\{\bf 0\}$. In the first case, it follows from \cite[Theorem 4.18]{CTV1} that $\MS(A)=\emptyset$. As a side comment, note that if one restricts to the intersection of all minimal Markov bases of minimal cardinality then it may be at most one binomial in $\MS(A)$. In the second case,  \cite[Proposition 1.1]{CTV} provides the following useful algebraic characterization, which will be needed in Section~\ref{sec:EqualBases}: the set of indispensable binomials of $A$ consists of all binomials $x^{{\bf u}^+}-x^{{\bf u}^-}$ corresponding to the nonzero vectors ${\bf u}$ in $\Ker_{\ZZ}(A)$ which have no proper semiconformal decomposition. We recall from \cite[Definition 3.9]{HS} that for vectors ${\bf u},{\bf v},{\bf w}\in\Ker_{\ZZ}(A)$ such that ${\bf u}={\bf v}+{\bf w}$, the sum is said to be a  {\em semiconformal decomposition of} ${\bf u}$, written ${\bf u}={\bf v}+_{sc}{\bf w}$ , if $v_i>0$ implies that $w_i\geq 0$, and $w_i<0$ implies that $v_i\leq 0$ for all $1\leq i\leq n$. As before we call the decomposition proper if both ${\bf v},{\bf w}$ are nonzero.  Note that when writing a semiconformal decomposition of ${\bf u}$ it is necessary to specify the order of the vectors added.

\begin{Proposition}\label{stable_indispensable}
Let $I_A$ be a stable toric ideal. Then the map ${\bf u}\mapsto B({\bf u})$ induces a bijective correspondence between the indispensable binomials of $A_B$ and the indispensable binomials of $A$.  
\end{Proposition}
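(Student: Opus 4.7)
The plan is to mimic the structure of the proof of Proposition~\ref{stable_markov}, but with semiconformal decompositions in place of Markov connectedness. First I would split into two cases according to whether $\Ker_{\ZZ}(A)\cap \NN^n=\{\bf 0\}$. By Lemma~\ref{mixed_same_positive}, stability preserves positive-gradedness both ways, so this case distinction matches on $A$ and $A_B$. In the non-positively-graded case, \cite[Theorem 4.18]{CTV1} yields $\MS(A)=\MS(A_B)=\emptyset$, so the correspondence is trivial.

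In the positively graded case, I would invoke the algebraic characterization \cite[Proposition 1.1]{CTV}: $x^{{\bf u}^+}-x^{{\bf u}^-}$ is indispensable for $A$ if and only if ${\bf u}\in\Ker_{\ZZ}(A)\setminus\{\bf 0\}$ admits no proper semiconformal decomposition, and similarly for $A_B$. Together with Theorem~\ref{gen_iso_kernels} and Remark~\ref{B(u)}, the task reduces to proving the following claim: for ${\bf u}\in\Ker_{\ZZ}(A_B)$, ${\bf u}$ has a proper semiconformal decomposition if and only if $B({\bf u})$ does.

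For one direction, if ${\bf u}={\bf v}+_{sc}{\bf w}$ properly in $\Ker_{\ZZ}(A_B)$, then $B({\bf u})=B({\bf v})+B({\bf w})$ in $\Ker_{\ZZ}(A)$, with both summands nonzero by the injectivity of $B$. To check semiconformality, fix $i$ with $B({\bf v})_i>0$. Since the supports of $\cb_{B_1},\ldots,\cb_{B_s}$ partition $[n]$, there is a unique $k$ with $i\in\supp(\cb_{B_k})$, and $B({\bf v})_i=(c_{B_k})_i v_k$. By stability and Lemma~\ref{mixed_bouquet}, $(c_{B_k})_i>0$, hence $v_k>0$, so $w_k\geq 0$, and thus $B({\bf w})_i=(c_{B_k})_i w_k\geq 0$. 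The second semiconformal condition follows symmetrically. Conversely, given a proper $B({\bf u})={\bf p}+_{sc}{\bf q}$, apply Theorem~\ref{gen_iso_kernels} to write ${\bf p}=B({\bf v})$, ${\bf q}=B({\bf w})$, and then ${\bf u}={\bf v}+{\bf w}$ by injectivity of $B$. To verify semiconformality at coordinate $k$, pick any $i\in\supp(\cb_{B_k})$ (so $(c_{B_k})_i>0$): if $v_k>0$ then $B({\bf v})_i>0$, hence $B({\bf w})_i\geq 0$, giving $w_k\geq 0$; the other implication is analogous. Properness is again immediate from injectivity of $B$.

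The only step requiring genuine care is the reverse direction, where one must recognize that non-mixedness (hence positivity of every nonzero entry of each $\cb_{B_k}$, via Lemma~\ref{mixed_bouquet}) together with the disjointness of their supports allows one to read off the sign of $v_k$ and $w_k$ from a single coordinate of $B({\bf v})$ and $B({\bf w})$ in the $k$-th bouquet's support. Once this is noted, the bijection $\MS(A_B)\longleftrightarrow \MS(A)$ follows at once from the characterization and Theorem~\ref{gen_iso_kernels}.
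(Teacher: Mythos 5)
Your proof is correct and takes essentially the same route as the paper's: reduce to the positively graded case via Lemma~\ref{mixed_same_positive} and \cite[Theorem 4.18]{CTV1}, then apply the semiconformal characterization of \cite[Proposition 1.1]{CTV} together with the observation that ${\bf u}\mapsto B({\bf u})$ preserves and reflects proper semiconformal decompositions because the $\cb_{B_i}$ have pairwise disjoint supports and (by stability) only positive nonzero entries. The paper states this last equivalence in one line, whereas you spell out the coordinate-by-coordinate sign argument; the content is the same.
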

\begin{proof}
It follows from the above considerations that if $I_A$ is not positively graded then $A$ has no indispensable binomials. Thus by Lemma~\ref{mixed_same_positive}, we may assume that $I_A$ is positively graded, otherwise the conclusion holds trivially. Note that if ${\bf u}={\bf v}+_{sc}{\bf w}$ is a proper semiconformal decomposition of ${\bf u}$ then $B({\bf u})=B({\bf v})+_{sc}B({\bf w})$ is a proper semiconformal decomposition of $B({\bf u})$ and vice versa, since all $\cb_{B_i}$ are nonzero and with the nonzero coordinates positive. Thus, by applying \cite[Proposition 1.1]{CTV} we obtain the desired correspondence between the indispensable binomials. \qed

\end{proof}

A remark on the proof is in order. Since by definition $\MS(A)$ is the intersection of all (minimal) Markov bases via the identification of the elements which differ by a sign, then by Proposition~\ref{stable_markov} we could have obtained  the desired correspondence between $\MS(A)$ and $\MS(A_B)$. However, we prefer to prove it using semiconformal decompositions, as it provides a basis for some constructions in later sections.

\begin{Proposition}\label{stable_ugb}
Let $I_A$ be a stable toric ideal. Then the map ${\bf u}\mapsto B({\bf u})$ induces a bijective correspondence between the reduced Gr\"obner bases of $A_B$ and the reduced Gr\"obner bases of $A$. In particular, there is a bijective correspondence between the universal Gr\"obner bases of $A_B$ and $A$. 
\end{Proposition}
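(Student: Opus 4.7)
The plan rests on exploiting stability, which by Lemma~\ref{mixed_bouquet} forces each ${\bf c}_{B_i}$ to have only positive nonzero coordinates, while (by construction) their supports are pairwise disjoint. Combined with Remark~\ref{B(u)}, this yields the key divisibility transfer: for ${\bf u},{\bf v} \in \NN^s$, the monomial $y^{\bf u}$ divides $y^{\bf v}$ in $K[y_1,\ldots,y_s]$ if and only if $x^{B({\bf u})}$ divides $x^{B({\bf v})}$ in $K[x_1,\ldots,x_n]$. The forward direction is immediate from positivity and additivity of $B$; the reverse reads off $u_i \leq v_i$ by comparing coordinates in $\supp({\bf c}_{B_i})$, where $({\bf c}_{B_i})_j > 0$.

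Given a term order $\prec$ on the $x$-variables, I would define a term order $\prec_B$ on the $y$-variables by $y^{\bf u} \prec_B y^{\bf v} \iff x^{B({\bf u})} \prec x^{B({\bf v})}$. This is a valid term order since $B \colon \NN^s \to \NN^n$ is injective (same bouquet structure) and additive. The claim is then that the map ${\bf u} \mapsto B({\bf u})$ identifies the reduced Gr\"obner basis of $I_{A_B}$ under $\prec_B$ with the reduced Gr\"obner basis of $I_A$ under $\prec$. Indeed, every binomial in $I_A$ is such an image by Theorem~\ref{gen_iso_kernels}, Remark~\ref{B(u)} ensures leading terms correspond ($y^{{\bf u}^+}$ maps to $x^{B({\bf u}^+)}$), and both the Gr\"obner basis property (leading monomials generating the initial ideal) and the reducedness condition (no trailing or leading monomial divisible by another leading monomial) transfer verbatim through the divisibility lemma.

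The induced map on reduced Gr\"obner bases is injective since $B$ is injective on $\NN^s$. The main obstacle will be surjectivity: showing that every reduced Gr\"obner basis of $I_{A_B}$ actually arises from a term order of the form $\prec_B$. I would handle this through weight vectors. Any reduced Gr\"obner basis of $I_{A_B}$ is determined by a sufficiently generic $\omega \in \RR^s$ (lying in the interior of the corresponding maximal cone of the Gr\"obner fan of $I_{A_B}$). The linear map $B^* \colon \RR^n \to \RR^s$ defined by $B^*(\eta)_i := \eta \cdot {\bf c}_{B_i}$ is surjective because the ${\bf c}_{B_i}$ are linearly independent (disjoint supports, nonzero), so there exists $\eta \in \RR^n$ with $B^*(\eta) = \omega$. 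The identity $\eta \cdot B({\bf u}) = \omega \cdot {\bf u}$ for all ${\bf u} \in \Ker_{\ZZ}(A_B)$ then shows that $\eta$ is generic for $I_A$, that the marking induced by $\eta$ on $\Ker_{\ZZ}(A)$ matches (via Theorem~\ref{gen_iso_kernels}) the one induced by $\omega$ on $\Ker_{\ZZ}(A_B)$, and hence by the divisibility lemma the reduced Gr\"obner basis of $I_A$ with respect to $\eta$ is precisely the image of $\mathcal{G}_B$. The universal Gr\"obner basis statement then follows by taking unions over all reduced Gr\"obner bases on both sides.
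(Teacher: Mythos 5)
Your proposal is correct and follows essentially the same route as the paper: the divisibility transfer through $B$, the pullback order $\prec_B$ (the paper's $<'$) for one direction, and a lifted weight vector for the other, your abstract preimage $\eta$ of $\omega$ under $B^*$ being exactly the paper's explicit choice $\omega'$. The only assertion that deserves a word of justification is that $\eta$ is generic for $I_A$ (equivalently, that the markings still match at $\omega$-ties): this holds because for any ${\bf u}\in\Ker_{\ZZ}(A_B)$ with $\omega\cdot{\bf u}=0$ the binomial $y^{{\bf u}^+}-y^{{\bf u}^-}$ lies in the monomial ideal $\ini_{\omega}(I_{A_B})$, so both of its terms, and hence both monomials $x^{B({\bf u}^+)}$, $x^{B({\bf u}^-)}$, are divisible by leading terms of the corresponding Gr\"obner bases, which is precisely the point the paper's proof also uses implicitly.
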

\begin{proof}
 As in the proof of Theorem~\ref{gen_iso_kernels} we may assume, for ease of notation, that there exist integers $1\leq i_1<i_2<\cdots<i_{s-1}\leq n$ such that ${\bf a}_1,\ldots,{\bf a}_{i_1}$ belong to the bouquet $B_1$, and so on, until ${\bf a}_{i_{s-1}+1},\ldots,{\bf a}_n$ belong to the bouquet $B_s$. Thus  $\supp(\cb_{B_1})=\{1,\ldots,i_1\}$, $\ldots$ , $\supp(\cb_{B_s})=\{i_{s-1}+1,\ldots,n\}$. 

Let $\mathcal G=\{y^{{\bf u_1}^+}-y^{{\bf u_1}^-},\ldots,y^{{\bf u_t}^+}-y^{{\bf u_t}^-}\}$ be a reduced Gr\"obner basis of $I_{A_B}$ with respect to a monomial order $<$ on $K[y_1,\ldots,y_s]$. By \cite[Proposition 1.11]{St} there exists a weight vector $\omega=(\omega_1,\ldots,\omega_s)\in\NN^s$ such that the monomial order $<$ is given by $\omega$, that is $\ini_{<}(I_{A_B})=\ini_{\omega}(I_{A_B})$. Without loss of generality we may assume that $y^{{\bf u_i}^+}>y^{{\bf u_i}^-}$, i.e. the dot product $\omega\cdot {\bf u_i}>0$, for any $i=1,\ldots,t$. We will define a monomial order $<_1$ on $K[x_1,\ldots,x_n]$ such that the set $\mathcal{G}'=\{x^{B({\bf u_1})^+}-x^{B({\bf u_1})^-},\ldots,x^{B({\bf u_t})^+}-x^{B({\bf u_t})^-}\}$ is a reduced Gr\"obner basis of $I_A$ with respect to $<_1$. For this, we first define the vector 
\[
\omega'=(\frac{\omega_1}{(c_{B_1})_{1}i_1},\ldots,\frac{\omega_1}{(c_{B_1})_{i_1}i_1},\frac{\omega_2}{(c_{B_2})_{i_1+1}(i_2-i_1)},  \ldots,\frac{\omega_{s}}{(c_{B_s})_{n}(n-i_{s-1})}),
\]
and let $\prec$ be an arbitrary monomial order on $K[x_1,\ldots,x_n]$. We define $<_1$ to be the monomial order $\prec_{\omega'}$ on $K[x_1,\ldots,x_n]$ induced by $\prec$ and $\omega'$, see \cite[Chapter 1]{St} for definition. Next we prove that $y^{{\bf u}^+}>y^{{\bf u}^-}$ implies that $x^{B({\bf u})^+}>_1 x^{B({\bf u})^-}$. Indeed, since $y^{{\bf u}^+}>y^{{\bf u}^-}$ then $\omega\cdot{\bf u}>0$, where ${\bf u}=(u_1,\ldots,u_s)$.  Thus
\[
\omega'\cdot B({\bf u})=\frac{\omega_1}{(c_{B_1})_{1}i_1}(c_{B_1})_1u_1+\frac{\omega_1}{(c_{B_1})_{2}i_1}(c_{B_1})_2u_1+\cdots=\omega\cdot{\bf u}>0, 
\]  
which implies that $x^{B({\bf u})^+}>_1 x^{B({\bf u})^-}$, as desired. In particular, we obtain that $x^{B({\bf u_i})^+}>_1 x^{B({\bf u_i})^-}$ for all $i$, and consequently $(x^{B({\bf u_1})^+}, \ldots,x^{B({\bf u_t})^+})\subset\ini_{<_1}(I_A)$. For the converse inclusion let $x^{B({\bf u})^+}- x^{B({\bf u})^-}\in I_A$ be an arbitrary element, see Theorem~\ref{gen_iso_kernels}, and say that $x^{B({\bf u})^+}>_1 x^{B({\bf u})^-}$, the other case being similar. It follows from the previous considerations that $y^{{\bf u}^+}>y^{{\bf u}^-}$ and thus there exists an integer $i$ such that $y^{{\bf u_i}^+}|y^{{\bf u}^+}$. Since ${\bf u}\mapsto B({\bf u})$ is a linear map and all $\cb_{B_i}$ are nonnegative then $x^{B({\bf u_i}^+)}|x^{B({\bf u}^+)}$. Since divisibility is compatible to any monomial order, then via Remark~\ref{B(u)}, we get that $x^{B({\bf u})^+}\in (x^{B({\bf u_1})^+}, \ldots,x^{B({\bf u_t})^+})$ and thus $\mathcal G'$ is a Gr\"obner basis of $I_A$ with respect to $<_1$. Finally, to prove that $\mathcal G'$ is reduced we argue by contradiction. This implies that there exists an integer $i$ such that $x^{B({\bf u_i}^-)}\in\ini_{<_1}(I_A)$, so $x^{B({\bf u_i}^-)}$ is divisible by some $x^{B({\bf u_j}^+)}$. This in turn implies that $y^{{\bf u_j}^+}|y^{{\bf u_i}^-}$, a contradiction since $\mathcal G$ is reduced. Therefore we obtain $\mathcal G'$ is reduced Gr\"obner basis with respect to $<_1$, as desired.      

Conversely, let  $\mathcal{G}'=\{x^{B({\bf u_1})^+}-x^{B({\bf u_1})^-},\ldots,x^{B({\bf u_t})^+}-x^{B({\bf u_t})^-}\}$ be a reduced Gr\"obner basis of $I_A$ with respect to a monomial order $<$ on  $K[x_1,\ldots,x_n]$. Using similar arguments as above one can prove that the set $\mathcal G=\{y^{{\bf u_1}^+}-y^{{\bf u_1}^-},\ldots,y^{{\bf u_t}^+}-y^{{\bf u_t}^-}\}$ is a reduced Gr\"obner basis of $A_B$ with respect to the monomial order $<'$ on $K[y_1,\ldots,y_s]$ defined as follows
\[
y^{\bf u}<' y^{\bf v} \ \text{ if and only if } \ x^{B({\bf u})}< x^{B({\bf v})}.
\]  
That $<'$ is a monomial order follows easily once we note that $<'$ is well defined, since all $\cb_{B_i}$'s are nonnegative. Therefore the proof is complete. \qed

\end{proof}

In summary, Theorem~\ref{gen_all_is_well} can be combined with Propositions~\ref{stable_markov}, \ref{stable_indispensable}, \ref{stable_ugb}  to provide, in particular, justification for the terminology `stable' toric ideals:

\begin{Theorem}\label{stable_toric}
Let $I_A$ be a stable toric ideal. Then the bijective correspondence between the elements of $\Ker_{\ZZ}(A)$ and $\Ker_{\ZZ}(A_B)$ given by ${\bf u}\mapsto B({\bf u})$, is preserved when we restrict to any of the following sets: Graver basis, circuits, indispensable binomials, minimal Markov bases, reduced Gr\"obner bases (universal Gr\"obner basis).
\end{Theorem}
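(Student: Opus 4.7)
The plan is to observe that this theorem is essentially an assembly of results that have already been established in the section (and in Section~1), each handling one of the distinguished subsets listed. So the proof reduces to citing the correct earlier statement for each set and verifying that the hypotheses match.

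First, I would invoke Theorem~\ref{gen_iso_kernels} to recall the baseline bijection $\ub \mapsto B(\ub)$ between $\Ker_{\ZZ}(A_B)$ and $\Ker_{\ZZ}(A)$, which holds for arbitrary $A$ (in particular, no stability hypothesis is required). Next, for the Graver bases and circuits, I would appeal directly to Theorem~\ref{gen_all_is_well}, which again does not require stability: the key facts that ${\bf u} = {\bf v} +_c {\bf w}$ iff $B({\bf u}) = B({\bf v}) +_c B({\bf w})$, and that $\Supp(B({\bf u})) = \bigcup_{i \in \Supp({\bf u})} \Supp({\bf c}_{B_i})$, are already extracted there.

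For the three remaining sets (indispensable binomials, minimal Markov bases, and reduced/universal Gr\"obner bases), stability enters through Remark~\ref{B(u)}: since every bouquet is non-mixed, each $\cb_{B_i}$ has all nonzero entries positive (via Lemma~\ref{mixed_bouquet}), giving the crucial identities $B({\bf u})^+ = B({\bf u}^+)$ and $B({\bf u})^- = B({\bf u}^-)$. With these in hand, one applies Proposition~\ref{stable_markov} for minimal Markov bases, Proposition~\ref{stable_indispensable} for indispensable binomials, and Proposition~\ref{stable_ugb} for reduced Gr\"obner bases (and hence the universal Gr\"obner basis, as the latter is the union of the former). Each of these propositions is stated precisely under the stable hypothesis, and their proofs already do the real work of showing that the bijection from Theorem~\ref{gen_iso_kernels} restricts correctly.

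There is no genuine obstacle: the theorem is a collated restatement, and the only thing to verify is that the stability hypothesis is exactly what each of the three propositions demands (it is, by Definition~\ref{stable_1}). Thus the proof reduces to a one-line deduction from the three propositions and the two theorems from Section~1, as already indicated in the paragraph immediately preceding the statement. \qed
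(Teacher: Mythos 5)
Your proposal is correct and matches the paper exactly: the paper gives no separate argument for Theorem~\ref{stable_toric}, stating it as the combination of Theorem~\ref{gen_all_is_well} (Graver bases and circuits) with Propositions~\ref{stable_markov}, \ref{stable_indispensable}, and \ref{stable_ugb}, precisely the assembly you describe. Your additional remarks on where stability enters (Remark~\ref{B(u)} via Lemma~\ref{mixed_bouquet}) accurately reflect how those propositions use the hypothesis.
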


\begin{Example}\label{infinite_not_mixed}
{\em Based on Theorem~\ref{inverse_construction}, there are infinitely many stable toric ideals. In fact, to construct them  is enough to consider matrices $A$ obtained as in Theorem~\ref{inverse_construction}, starting from arbitrary vectors $\ab_i$'s, but considering only vectors $\cb_i$'s with positive coordinates. Then via Remark~\ref{B(u)}, the corresponding subbouquets of $A$ are either free or non-mixed, and thus $I_A$ is a stable toric ideal.}
\end{Example}


We conclude this section with an application of the stable toric ideals to the construction of generic toric ideals, an open problem posed by Miller and Sturmfels \cite[Section 9.4]{MS}. Recall from \cite{PS} that a toric ideal $I_A$ is called {\em generic} if it is minimally generated by binomials with full support, that is $I_A=(x^{\bf u_1}-x^{\bf v_1},\ldots,x^{\bf u_s}-x^{\bf v_s})$, and none of the vectors ${\bf u_i}-{\bf v_i}$ has a zero coordinate. The following result states that in the case of stable toric ideals genericity is preserved when passing from $A$ to $A_B$ and conversely.

\begin{Theorem}\label{generic_preserved}
Let $I_A$ be a stable toric ideal. Then $I_A$ is a generic toric ideal if and only if $I_{A_B}$ is a generic toric ideal.   
\end{Theorem}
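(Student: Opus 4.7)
The plan is to reduce the statement to the bijection between minimal Markov bases of $A$ and $A_B$ from Proposition~\ref{stable_markov}, together with a simple comparison of supports under the map $\mathbf{u}\mapsto B(\mathbf{u})$.

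First I would exploit the standing assumption of the section that $A$ has no free bouquet. Combined with stability of $I_A$, this means via Lemma~\ref{mixed_bouquet} that each $\mathbf{c}_{B_i}$ is nonzero, has only positive nonzero entries, and that $\{\supp(\mathbf{c}_{B_i})\}_{i=1}^{s}$ is a partition of $[n]$. I would then record the key support identity: for any $\mathbf{u}=(u_1,\ldots,u_s)\in\Ker_{\ZZ}(A_B)$,
\[
\supp(B(\mathbf{u}))=\bigcup_{i\in\supp(\mathbf{u})}\supp(\mathbf{c}_{B_i}),
\]
which is immediate from $B(\mathbf{u})=\sum_{i=1}^s u_i\mathbf{c}_{B_i}$, the pairwise disjointness of the supports, and the sign homogeneity of each $\mathbf{c}_{B_i}$ (so no cancellation occurs). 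Combined with the covering property $\bigcup_{i=1}^s\supp(\mathbf{c}_{B_i})=[n]$, this identity shows that $B(\mathbf{u})$ has full support in $[n]$ if and only if $\mathbf{u}$ has full support in $[s]$.

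Finally I would conclude using Proposition~\ref{stable_markov}. If $I_{A_B}$ is generic, pick a minimal Markov basis $\MM$ of $A_B$ consisting of full-support vectors; then $\MM'=\{B(\mathbf{u}):\mathbf{u}\in\MM\}$ is a minimal Markov basis of $A$ by Proposition~\ref{stable_markov}, and by the support identity each element of $\MM'$ has full support, so $I_A$ is generic. Conversely, any minimal Markov basis of $A$ has the form $\{B(\mathbf{u}):\mathbf{u}\in\MM\}$ for some minimal Markov basis $\MM$ of $A_B$; if all $B(\mathbf{u})$ have full support, the support identity forces the corresponding $\mathbf{u}$'s to have full support, whence $I_{A_B}$ is generic.

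The only real subtlety is to ensure that stability is genuinely being used: without it the $\mathbf{c}_{B_i}$ could have mixed signs and the support identity could fail, since cancellation in $B(\mathbf{u})$ could shrink $\supp(B(\mathbf{u}))$ relative to $\bigcup_{i\in\supp(\mathbf{u})}\supp(\mathbf{c}_{B_i})$. Once the positivity of the nonzero entries of each $\mathbf{c}_{B_i}$ is invoked, everything else is a direct transfer via the Markov basis bijection, so I do not anticipate a hard step.
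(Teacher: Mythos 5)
Your proposal is correct and follows essentially the same route as the paper: both reduce the statement to the bijection between minimal Markov bases given by Proposition~\ref{stable_markov} (equivalently Theorem~\ref{stable_toric}) together with the support identity $\supp(B(\mathbf{u}))=\bigcup_{i\in\supp(\mathbf{u})}\supp(\mathbf{c}_{B_i})$ and the covering $\bigcup_i\supp(\mathbf{c}_{B_i})=[n]$. One small remark: the support identity actually holds regardless of signs, since the $\mathbf{c}_{B_i}$ have pairwise disjoint supports and so no coordinatewise cancellation can occur; stability is genuinely needed only for the Markov basis correspondence itself, not for that identity.
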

\begin{proof}
Applying Theorem~\ref{stable_toric} we know that the map ${\bf u}\mapsto B({\bf u})$ induces a bijective correspondence between the minimal Markov bases. Thus, since $B((u_1,\ldots,u_s))=\sum_{i=1}^s \cb_{B_i}u_i$ and $\cup_i \Supp(\cb_{B_i})=[n]$, then $\MM$ is a minimal Markov basis of $A_B$, with each vector having full support if and only if $\{B({\bf u}): \ {\bf u}\in\MM\}$ is a minimal Markov basis of $A$ with each vector having full support. \qed  

\end{proof}

\begin{Remark}\label{generic_construct}
{\em In particular, Section 2 provides a way to construct an infinite class of generic toric ideals starting from an arbitrary example of a generic toric ideal. More precisely, one can use the examples of generic toric ideals, see \cite[Example 2.3, Theorem 2.4]{Oj}, and for each such example choose arbitrary $\cb_i$'s with all coordinates positive to construct matrices $A$ following the procedure given in Theorem~\ref{inverse_construction}.  Theorem~\ref{generic_preserved} guarantees that $I_A$ will be a generic toric ideal. }
\end{Remark}

\begin{Theorem}  \label{all_homological_stable}
Let $I_A\subset S=K[x_1,\ldots,x_n]$ be a stable positively graded toric ideal and $A_B=[\ab_{B_1},\ldots,\ab_{B_s}]$ with $I_{A_B}\subset R=K[y_1,\ldots,y_s]$. If we denote by $\FF_{\bullet}$ the minimal $\NN A_B$-graded free resolution of $R/I_{A_B}$, then $\FF_{\bullet}\otimes_{R}S$ is a minimal $\NN A$-graded free resolution of $I_{A}$.     
\end{Theorem}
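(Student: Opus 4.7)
The plan is to build a $K$-algebra map $\varphi\colon R\to S$ by $y_i\mapsto \xb^{\cb_{B_i}}$ and show that, viewing $S$ as an $R$-module via $\varphi$, the complex $\FF_\bullet\otimes_R S$ is a minimal $\NN A$-graded free resolution of $(R/I_{A_B})\otimes_R S\cong S/I_A$. This reduces to four claims: (a) $\varphi$ is $\NN A_B$- to $\NN A$-graded (sending $\deg(y_i)=\ab_{B_i}$ to the same element of $\ZZ^m$, which is immediate from $\ab_{B_i}=\sum_j(c_{B_i})_j\ab_j$); (b) $\varphi(I_{A_B})\cdot S=I_A$; (c) $\varphi(\mm_R)\subset\mm_S$ (so minimality passes to the tensored complex); and (d) $S$ is $R$-flat (in fact free), so tensoring preserves exactness of $\FF_\bullet\to R/I_{A_B}\to 0$.

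Claims (a) and (c) are immediate; for (c) the key input from stability is that each $\cb_{B_i}$ is nonzero with nonnegative entries, so $\varphi(y_i)$ is a monomial of positive total degree and lies in $\mm_S$. For (b), a direct calculation gives $\varphi(y^{\ub})=\xb^{B(\ub)}$ from $B(\ub)=\sum_i u_i\cb_{B_i}$, and stability together with Remark~\ref{B(u)} yields
\[
  \varphi(y^{\ub^+}-y^{\ub^-})=\xb^{B(\ub)^+}-\xb^{B(\ub)^-},
\]
which, via Theorem~\ref{gen_iso_kernels} (or Theorem~\ref{stable_toric}), runs over a generating set of $I_A$ as $\ub$ runs over a generating set of $\Ker_{\ZZ}(A_B)$. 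This gives both inclusions in $\varphi(I_{A_B})S=I_A$.

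The main obstacle is (d), the freeness of $S$ over $R$. Here I would exploit the fact that the supports $\supp(\cb_{B_i})$ are pairwise disjoint with union $[n]$ to decompose, as $K$-algebras, $S\cong\bigotimes_{i=1}^s S_i$ and $R\cong\bigotimes_{i=1}^s K[y_i]$, where $S_i:=K[x_j:j\in\supp(\cb_{B_i})]$, and $\varphi$ factors as the tensor product of the component maps $\varphi_i\colon K[y_i]\to S_i$ with $\varphi_i(y_i)=\prod_{j\in\supp(\cb_{B_i})}x_j^{(c_{B_i})_j}$. Since tensor products of free modules (over the corresponding tensor product of rings) are free, it suffices to show each $S_i$ is free over $K[y_i]$. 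This is the crucial place where stability enters: all exponents $(c_{B_i})_j$ appearing in $\varphi_i(y_i)$ are strictly positive, so multiplication by $\varphi_i(y_i)$ acts on the monomial lattice $\NN^{m_i}$ by translation by a vector with all coordinates positive; the orbits of this action partition $\NN^{m_i}$, and any system of orbit representatives furnishes an explicit $K[y_i]$-basis of $S_i$. Without non-mixedness this freeness could fail in general, which is exactly why the theorem requires stability.

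Assembling everything: by (d), tensoring the exact complex $\FF_\bullet\to R/I_{A_B}\to 0$ with $S$ over $R$ remains exact; by (a) the terms $F_k\otimes_R S$ are $\NN A$-graded free $S$-modules; by (b) the augmentation becomes $F_0\otimes_R S\to S/I_A\to 0$; and by (c), since the differentials of $\FF_\bullet$ have entries in $\mm_R$ and $\varphi(\mm_R)\subset\mm_S$, the differentials of $\FF_\bullet\otimes_R S$ have entries in $\mm_S$, giving minimality.
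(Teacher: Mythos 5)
Your proposal is correct, and its overall architecture is the same as the paper's: the same $K$-algebra map $\varphi\colon R\to S$, $y_i\mapsto \xb^{\cb_{B_i}}$, the same check that it is graded from $\NN A_B$ to $\NN A$, the same identification $\varphi(I_{A_B})S=I_A$ via Theorem~\ref{gen_iso_kernels} and Remark~\ref{B(u)}, and the same conclusion by base change along $\varphi$. Where you genuinely diverge is the key flatness step: the paper gets flatness of $\varphi$ by citing \cite[Theorem 18.16]{Ei} (the Cohen--Macaulay/regular fiber-dimension criterion, in the spirit of \cite[Proposition 6.5]{BP}) and then \cite[Proposition 6.1]{Ei} to identify $I_{A_B}\otimes_R S$ with $I_A$, whereas you prove the stronger statement that $S$ is a \emph{free} $R$-module, by splitting $S\cong\bigotimes_i K[x_j: j\in\supp(\cb_{B_i})]$ and $R\cong\bigotimes_i K[y_i]$ along the pairwise disjoint supports and exhibiting in each factor the monomials not divisible by $\xb^{\cb_{B_i}}$ as an explicit $K[y_i]$-basis; the uniqueness of the factorization $\xb^{\bf w}=(\xb^{\cb_{B_i}})^{k}\,\xb^{\bf v}$ with $\xb^{\cb_{B_i}}\nmid \xb^{\bf v}$ is exactly your orbit decomposition, and it uses the strict positivity of $(c_{B_i})_j$ on the support, i.e.\ non-mixedness. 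Your route is more elementary and self-contained (no Cohen--Macaulay input) and yields freeness rather than mere flatness; the paper's route is shorter given the citation. You also make explicit the minimality step $\varphi(\mm_R)\subset\mm_S$, which the paper leaves implicit. Two small points: like the paper, you should reduce to the case of no free bouquet (or choose the vector $\cb_B$ of a free bouquet with nonnegative entries), since for a free bouquet the defining vector need not be nonnegative and $\varphi$ would otherwise not even be defined on that block; and for the same reason your closing remark is better phrased as saying that without non-mixedness the monomial map $\varphi$ itself does not exist, rather than that freeness ``could fail.''
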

\begin{proof}
Since $I_A$ is a stable toric ideal then all the non-free bouquets are non-mixed. Without loss of generality, we may assume, as in the proof of Theorem~\ref{gen_iso_kernels} that $A$ has no free bouquet and there exist integers $1\leq i_1<i_2<\cdots<i_{s-1}\leq n$ such that $\ab_{1},\ldots,\ab_{i_1}$ belong to the bouquet $B_1$, and so on, until $\ab_{i_{s-1}+1},\ldots,\ab_n$ belong to the bouquet $B_s$. In particular, by Remark~\ref{B(u)}, all nonzero coordinates of $\cb_{B_1},\ldots,\cb_{B_s}$ are positive. We define the following $K$-algebra homomorphism
\begin{eqnarray*}
\phi: R&\to& S \\
y_1&\mapsto& x_1^{(c_{B_1})_1}\cdots x_{i_1}^{(c_{B_1})_{i_1}}\\
 &\vdots& \\
y_s&\mapsto& x_{i_{s-1}+1}^{(c_{B_s})_{i_{s-1}+1}}\cdots x_{n}^{(c_{B_s})_{n}}.
\end{eqnarray*}
The homomorphism $\phi$ is well-defined since all nonzero coordinates of $\cb_{B_i}$ are positive, and is a graded homomorphism with respect to the gradings induced by 1) $\NN A_B$ on $R$, that is $\deg(y_i)=\ab_{B_i}$  for all $i=1,\ldots,s$, and 2) $\NN A$ on $S$, that is $\deg(x_j)=\ab_j$ for all $j=1,\ldots,n$. Indeed,  
\[
\deg(y_j)=\ab_{B_j}=\sum_{(c_{B_j})_k\neq 0} (c_{B_j})_k \ab_k = \sum_{k=i_{j-1}+1}^{i_j} (c_{B_j})_k\ab_k = \deg (x_{i_{j-1}+1}^{(c_{B_j})_{i_{j-1}+1}}\cdots x_{n}^{(c_{B_j})_{i_{j}}}), 
\]
the last one being the degree $\deg(\phi(y_j))$ for all $j$. In addition,   
\[
I_A=(x^{B({\bf u})^+}-x^{B({\bf u})^-}: {\bf u}\in\Ker_{\ZZ}(A_B))=(x^{B({\bf u}^+)}-x^{B({\bf u}^-)}: {\bf u}\in\Ker_{\ZZ}(A_B)),
\]   
where the first equality follows from Theorem~\ref{gen_iso_kernels} and the second equality from Remark~\ref{B(u)}, which implies $\phi(I_{A_B})=I_A$, since by definition $\phi(y^{{\bf u}^+}-y^{{\bf u}^-})=x^{B({\bf u}^+)}-x^{B({\bf u}^-)}$ for all ${\bf u}\in\Ker_{\ZZ}(A_B)$.   

Applying now \cite[Theorem 18.16]{Ei} we obtain that $\phi$ is flat. This implies that the natural map $I_{A_B}\otimes_{R} S\rightarrow I_A$ is an isomorphism of graded $S$-modules; see \cite[Proposition 6.1]{Ei}. Finally, flatness of $\phi$ ensures that tensoring the minimal graded free resolution of $I_{A_B}$ as $R$-module with $S$ we obtain the minimal graded free resolution of $I_A$ as $S$-module, as desired.  \qed 

\end{proof}

\begin{Remark}\label{stable_robust}
{\em Let $I_A$ be a stable toric ideal. It follows from Theorem~\ref{stable_toric} that $I_A$ is a robust toric ideal if and only if $I_{A_B}$ is a robust toric ideal. In particular, using the same strategy described above for generic lattice ideals, we can construct robust toric ideals that are different from the ones considered in \cite{BR,BBDLMNS} which, in fact, correspond to toric ideals of graphs and toric ideals generated by degree two binomials. Using again Theorem~\ref{stable_toric} we also have that $I_A$ is generalized robust toric ideal if and only if $I_{A_B}$ is a generalized robust toric ideal (see \cite{T} for the definition of generalized robust toric ideal).}  

\end{Remark}

\begin{Remark}\label{first_stable}
\rm As a concluding remark of this section, given that stability preserves a lot of information, we discuss the case when a given ideal is not stable but we still wish to preserve all the combinatorial and algebraic information. To that end, suppose that $I_A$ is stable with respect to a certain subbouquet decomposition,  say $B_1,\ldots,B_t$. Then, the subbouquet ideal $I_{A'_B}$ associated to this decomposition preserves all ``combinatorial data" of $I_A$, and its minimal graded free resolution can be read off from the one of $I_A$. Indeed, the reader will have noted that, throughout this section, the only part of the stability hypothesis on $I_A$ that is relevant for the proofs is the fact that the bouquets are non-mixed. Maximality of the bouquets was not assumed. Hence, the results hold true for any subbouquet decomposition. Therefore, in case of a non-stable toric ideal, we are motivated to look for a ``maximal" subbouquet decomposition such that toric ideal is stable with respect to it. There is indeed such a subbouquet decomposition and a canonical way to obtain it. Since $I_A$ is not stable, there exists at least one mixed bouquet. Each mixed bouquet has a subbouquet decomposition into two maximal non-mixed subbouquets. To see this let us consider a mixed bouquet $B$, which by definition contains an edge $\{\ab_i,\ab_j\}\in E_A^-$. By Lemma~\ref{bouq_check} we have $G(\ab_i)=\lambda G(\ab_j)$ for some $\lambda<0$. We define $B_1$ to be the clique induced on the subset of vertices of $B$ for which the Gale transform is a positive multiple of $G(\ab_i)$, while $B_2$ is the clique induced on the subset of vertices of $B$ for which the Gale transform is a positive multiple of $G(\ab_j)$. It is straightforward to see that $B_1$ and $B_2$ are the desired maximal non-mixed subbouquets. Thus considering the non-mixed bouquets of $I_A$ and taking all maximal non-mixed subbouquets of all mixed bouquets we obtain the desired canonical subbouquet decomposition. Note also that this subbouquet decomposition is non-trivial only if at least one of the non-mixed subbouquets is not an isolated vertex. This canonical subbouquet decomposition provides  the subbouquet ideal $I_{A'_B}$. Finally, we can pass from $I_{A'_B}$ to $I_{A_B}$ through a bouquet graph whose non-mixed bouquets are isolated vertices, and mixed bouquets (if any) are singleton edges.
\end{Remark}     

The following example explains the general strategy of passing from a toric ideal to its bouquet ideal through a subbouquet ideal, where the subbouquet is chosen  such that the toric ideal is stable with respect to the underlying subbouquet decomposition.
               
\begin{Example}\label{general_decomposition} 
{\em  Let $A=[\ab_1,\ldots,\ab_9]$ be the integer matrix
\[
\left( \begin{array}{ccccccccc}
3 & 0 & 0 & 0 & 4 & 5 & 0 & 0 & 0\\
1 & 0 & 1 & 0 & 0 & 0 & 0 & 0 & 0\\
0 & 0 & 0 & 0 & 0 & 1 & 0 & 1 & 0\\
-1 & 1 & 0 & 0 & 0 & 0 & 0 & 0 & 0\\
0 & 0 & -1 & 1 & 0 & 0 & 0 & 0 & 0\\
0 & 0 & 0 & 0 & 0 & -1 & 1 & 0 & 0\\
0 & 0 & 0 & 0 & 0 & 0 & 0 & -1 & 1
\end{array} \right).
\] 
It has three non-free bouquets, $B_1,B_2,B_3$, with $B_1,B_3$ mixed and $B_2$ non-mixed, hence $I_A$ is not stable. The bouquet $B_1$ is on the set of vertices $\{\ab_1,\ab_2,\ab_3,\ab_4\}$  with $G(\ab_1)=G(\ab_2)=-G(\ab_3)=-G(\ab_4)$, $B_2$ is the isolated vertex $\ab_5$, and $B_3$ has the set of vertices $\{\ab_6,\ab_7,\ab_8,\ab_9\}$ with $G(\ab_6)=G(\ab_7)=-G(\ab_8)=-G(\ab_9)$. Each of the mixed bouquets $B_1,B_3$ has two maximal non-mixed subbouquets $B'_1,B''_1$ and $B'_3,B''_3$, respectively.   Moreover, the encoding vectors are $\cb_{B'_1}=(1,1,0,0,0,0,0,0,0)$, $\cb_{B''_1}=(0,0,1,1,0,0,0,0,0)$, $\cb_{B_2}=(0,0,0,0,1,0,0,0,0)$, $\cb_{B'_3}=(0,0,0,0,0,1,1,0,0)$ and $\cb_{B''_3}=(0,0,0,0,0,0,0,1,1)$. Therefore, the associated subbouquet matrix $A'_B=[\ab_{B'_1},\ab_{B''_1},\ab_{B_2},\ab_{B'_3},\ab_{B''_3}]\in\ZZ^{7\times 5}$ is 
\[
\left( \begin{array}{ccccc}
3 & 0 & 4 & 5 & 0\\
1 & 1 & 0 & 0 & 0\\
0 & 0 & 0 & 1 & 1\\
{\bf 0} & {\bf 0} & {\bf 0} & {\bf 0} & {\bf 0}
\end{array} \right),
\]     
where the last row of bold zeros stands for the zero block matrix from $\ZZ^{4\times 5}$. Since $I_A$ is stable with respect to the subbouquet decomposition given by the subbouquets $B'_1,B''_1,B_2,B'_3,B''_3$, this implies that the conclusions of Theorem~\ref{stable_toric} and Theorem~\ref{all_homological_stable} do apply, when restricted to $I_A$ and $I_{A'_B}$. In other words, when passing from $I_A$ to $I_{A'_B}$  we preserve all the combinatorial and algebraic data. Finally, the matrix $A'_B$ has three non-free bouquets, two mixed and one non-mixed, with the two mixed consisting of a single edge $\{\ab_{B'_1},\ab_{B''_1}\}$ and $\{\ab_{B'_3},\ab_{B''_3}\}$, respectively, while the non-mixed one is the isolated vertex $\ab_{B_2}$. Computing the bouquet matrix of $A'_B$ we obtain  
\[
A_B= \left( \begin{array}{ccccc}
3 & 4 & 5\\
{\bf 0} & {\bf 0} & {\bf 0}
\end{array} \right)\in\ZZ^{7\times 3},
\]   
and the toric ideal of $A_B$ equals the toric ideal of the monomial curve $(3 \ 4 \ 5)$. Comparing now $I_A$ (and thus $I_{A'_B}$) with $I_{A_B}$ we note, for example, that a minimal Markov basis of $I_A$ has six elements, while a minimal Markov basis of $I_{A_B}$ has only three. 
} 
\end{Example}

\section{ A combinatorial characterization of strongly robust toric ideals } 
 \label{sec:EqualBases}

It is well known from \cite[Proposition 7.1]{St} that the Graver basis, the universal Gr{\"o}bner basis, any reduced Gr{\"o}bner basis and any minimal Markov basis are equal for the toric ideal of the second Lawrence lifting of an arbitrary integer matrix. It is also well known that Lawrence liftings are not the only matrices with this property. For example, it was  shown to hold for $2$-regular uniform hypergraphs by Gross and Petrovi\'c in \cite{GP}, and for robust toric ideals of graphs by Boocher et al.\ in \cite{BBDLMNS}. Furthermore, such examples can have both mixed and non-mixed bouquets, as in Example~\ref{four_sets}(b). 
Therefore, it is clear that  bouquets alone do not capture equality of  bases. 

The main result of this section, Theorem~\ref{graver=indisp}, is a characterization of  toric ideals  whose bases are equal. It relies on two additional ingredients. The first one is the familiar notion of a semiconformal decomposition (cf. Section~\ref{sec:OnStableToricIdeals}), which provides an algebraic characterization of equality of bases. The second one is the new concept of \emph{$S$-Lawrence} ideals that generalizes the classical second Lawrence lifting.

	Before we proceed,  recall that  if	$\Ker_{\ZZ}(A)\cap\NN^n\neq\{\bf 0\}$, the four sets of bases can never be simultaneously equal	by \cite[Theorem 4.18]{CTV1}.
 Thus we may assume for the rest of this section that $\Ker_{\ZZ}(A)\cap\NN^n=\{\bf 0\}$, which is equivalent to saying that $I_A$ is positively graded. 
 Recall also that the  {\em fiber} $\mathcal{F}_{{\bf u}}$ of a monomial $x^{\bf u}$ is the set $\{{\bf t}\in\NN^n : {{\bf u}}-{\bf t}\in \Ker_{\ZZ}(A)\}$. When $I_A$ is positively graded, $\mathcal{F}_{{\bf u}}$ is a finite set.

 For $S\subset [n]$ and a vector ${\bf u}=(u_1,\ldots,u_n)\in \ZZ^n$,  we define the \emph{$S$-part of ${\bf u}$} to be the vector $(u_i|i\in S)$ with $|S|$ coordinates; if $S=\emptyset$, then the $S$-part of the vector ${\bf u}$ is $0$. Define the \emph{$S$-parallelepiped of ${\bf u}$}, $P_S({\bf u})$, to be the cartesian product $\prod_{i\in S}[0,\max\{u_i^+, u_i^-\}]$ if $S\neq\emptyset$, and set $P_{\emptyset}({\bf u})=\{0\}$.
 
\begin{Definition}\label{S-Lawrence}
{\em Fix a subset $S$ of $[n]$. The toric ideal $I_A$ is said to be {\em $S$-Lawrence } if and only if for every element ${\bf u}\in\Gr(A)$, there exists no element ${\bf w}$ in the fiber of ${\bf u}^+$, different from ${\bf u}^+$ and ${\bf u}^-$, such that the $S$-part of ${\bf w}$ belongs to $P_S({\bf u})$.}
\end{Definition} 

 The $S$-Lawrence property is a natural one. Namely, by (the conformal) definition  of the Graver basis, every toric ideal is $S$-Lawrence in the case when $S=[n]$. Another straightforward property is that if $I_A$ is $S$-Lawrence and $S\subset S'$ then $I_A$ is also $S'$-Lawrence. On the other hand, if $S=\emptyset$ and $I_A$ is $\emptyset$-Lawrence, then for every ${\bf u}\in\Gr(A)$ the fiber of ${\bf u}^+$ consists of just two elements ${\bf u}^+,{\bf u}^-$. In particular, the fiber of ${\bf u}^+$ being finite implies that all fibers of $I_A$ are finite (see for example \cite[Proposition 2.3]{CTV1}), and thus $I_A$ is positively graded. Since the fiber of each Graver basis element ${\bf u}$ consists of the two elements ${\bf u}^+,{\bf u}^-$, whose supports are disjoint,  every Graver basis element is in a minimal Markov basis, and thus indispensable via \cite[Corollary 2.10]{CKT}. Hence  the two bases are equal:  $\Gr(A)=\MS(A)$. In summary, $I_A$ is $\emptyset$-Lawrence if and only if $I_A$ is positively graded and  $I_A$ is strongly robust. By \cite[Theorem 4.18]{CTV1} if $I_A$ is not positively graded then $\MS(A)=\emptyset$ and thus $\MS(A)\neq \Gr(A)$, implying that $I_A$ can not be strongly robust. Therefore, we have $I_A$ is $\emptyset$-Lawrence if and only if $I_A$ is strongly robust. 

We are now ready to state the desired  combinatorial characterization. 

\begin{Theorem}\label{graver=indisp}
Let $B_1, \dots, B_s$ be the bouquets of $A=[{\bf a}_1,\dots,{\bf a}_n]$, and define $A_B=[\ab_{B_1},\dots,\ab_{B_s}]$. Let $S\subset [s]$ be the subset of coordinates corresponding to the mixed bouquets. Then the following are equivalent:
\begin{enumerate}
\item [(a)] There exists no element in the Graver basis of $A_B$ which has a proper semiconformal decomposition that is conformal on the coordinates corresponding to $S$.  
\item [(b)]  $I_A$ is strongly robust,  i.e. the following sets coincide:
\begin{itemize}
\item the Graver basis of $A$,
\item the universal Gr{\"o}bner basis of $A$,
\item any reduced Gr{\"o}bner basis of $A$,
\item any minimal Markov basis of $A$.
\end{itemize}    
\item [(c)] The toric ideal of $A_B$ is $S$-Lawrence.  
\end{enumerate}
\end{Theorem}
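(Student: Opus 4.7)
The plan is to establish the two equivalences (a)$\iff$(b) and (a)$\iff$(c) by carefully translating (semi)conformal decompositions under the bijection $\ub\mapsto B(\ub)$ from Theorem~\ref{gen_iso_kernels}, using the sign structure of the bouquet-encoding vectors $\cb_{B_k}$.

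First, I would reduce (b) to a purely algebraic statement inside $\Ker_{\ZZ}(A_B)$. As noted before the theorem, the four bases in (b) all coincide precisely when $I_A$ is $\emptyset$-Lawrence, i.e., $\Gr(A)=\MS(A)$. By the characterization of indispensable binomials via semiconformal decompositions \cite[Proposition 1.1]{CTV}, this is equivalent to: no element of $\Gr(A)$ admits a proper semiconformal decomposition in $\Ker_{\ZZ}(A)$. By Theorem~\ref{gen_all_is_well}, every Graver basis element of $A$ has the form $B(\ub)$ for a unique $\ub\in\Gr(A_B)$, so the question becomes one about decompositions of vectors $B(\ub)$.

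The heart of the argument is the following translation. Since the supports of the $\cb_{B_k}$ are pairwise disjoint, Theorem~\ref{gen_iso_kernels} gives that a (proper) decomposition $B(\ub)=B(\ub_1)+B(\ub_2)$ in $\Ker_{\ZZ}(A)$ is the same data as a (proper) decomposition $\ub=\ub_1+\ub_2$ in $\Ker_{\ZZ}(A_B)$. Fix $k\in[s]$ and examine the semiconformal condition on coordinates inside the $k$-th bouquet. If $B_k$ is non-mixed (so $k\notin S$), then by Lemma~\ref{mixed_bouquet} the nonzero entries of $\cb_{B_k}$ are all positive, and the semiconformal condition at any one such coordinate is visibly equivalent to the semiconformal condition on the single coordinate $(\ub_1)_k,(\ub_2)_k$. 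If $B_k$ is mixed (so $k\in S$), then $\cb_{B_k}$ has both positive and negative entries in its support; applying the semiconformal condition at a positive support coordinate forces $(\ub_1)_k>0\Rightarrow (\ub_2)_k\ge 0$, while applying it at a negative support coordinate forces $(\ub_1)_k<0\Rightarrow (\ub_2)_k\le 0$. Together these are exactly the conformal condition on the $k$-th coordinate. Thus $B(\ub)$ has a proper semiconformal decomposition in $\Ker_{\ZZ}(A)$ iff $\ub$ has a proper decomposition in $\Ker_{\ZZ}(A_B)$ that is semiconformal on $[s]\setminus S$ and conformal on $S$. This yields (a)$\iff$(b).

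For (a)$\iff$(c), I would use the standard correspondence between semiconformal decompositions of a vector $\ub\in\Ker_{\ZZ}(A_B)$ and elements of the fiber of $\ub^+$. Given $\ub=\ub_1+\ub_2$, set $\tb=\ub^+-\ub_1=\ub^-+\ub_2$; then $\tb\in\NN^s$ iff the decomposition is semiconformal, $\tb\ne\ub^+,\ub^-$ iff the decomposition is proper, and $\tb$ obviously lies in $\mathcal F_{\ub^+}$. Moreover, for $k\in S$, one checks directly that conformality of the decomposition on the $k$-th coordinate is equivalent to $0\le t_k\le\max\{u_k^+,u_k^-\}$, i.e.\ to $t_k\in[0,\max\{u_k^+,u_k^-\}]$. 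Hence the $S$-part of $\tb$ lies in $P_S(\ub)$ exactly when the decomposition is conformal on $S$, which is precisely the $S$-Lawrence condition of Definition~\ref{S-Lawrence}. This gives (a)$\iff$(c).

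The main obstacle is the sign bookkeeping in the translation step between semiconformal decompositions of $B(\ub)$ and decompositions of $\ub$: one must handle positive and negative support coordinates of $\cb_{B_k}$ for mixed bouquets separately, and see that they jointly promote semiconformal to conformal on the $S$-coordinates, while free and non-mixed bouquets contribute nothing beyond semiconformal. A secondary point of care is that, because the supports of the $\cb_{B_k}$ are pairwise disjoint and cover $[n]$, properness of a decomposition is preserved in both directions, so no spurious trivial decompositions are introduced.
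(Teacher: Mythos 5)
Your proof is correct and follows essentially the same route as the paper: the same sign analysis of the vectors $\cb_{B_k}$ (positive entries for non-mixed bouquets, mixed signs for mixed ones) translating proper semiconformal decompositions of $B(\ub)$ into proper decompositions of $\ub$ that are semiconformal everywhere and conformal on $S$, together with the same fiber correspondence $\wb\mapsto(\ub^+-\wb,\ \wb-\ub^-)$ for the equivalence with the $S$-Lawrence property. The only difference is organizational: the paper splits off the case $S=\emptyset$ and handles it via Theorem~\ref{stable_toric} and Lemma~\ref{mixed_same_positive}, whereas your uniform argument covers that case directly.
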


\begin{proof}
We may assume that the $s$ bouquets of $A$ are not free and partition the set $\{\ab_1,\ldots,\ab_n\}$ into $s$ subsets such that the first $i_1$ vectors belong to the bouquet $B_1$, the next $i_2$ vectors belong to the bouquet $B_2$, and so on, the last $i_s$ vectors belong to the bouquet $B_s$. 

In order to prove the theorem we analyze two cases: 1) $S=\emptyset$ and 2) $S\neq\emptyset$. If  $S=\emptyset$, then the hypothesis of Theorem~\ref{graver=indisp} translates to $I_A$ being a stable, positively graded toric ideal. It follows from the remarks after Definition~\ref{S-Lawrence} that condition (c) is equivalent to $I_{A_B}$ positively graded and $\Gr(A_B)=\MS(A_B)$. On the other hand, since $I_A$ is a stable, positively graded toric ideal,  Lemma~\ref{mixed_same_positive} implies that $I_{A_B}$ is also positively graded. Thus (a) is equivalent, via \cite[Proposition 1.1]{CTV}, to $I_{A_B}$ being  positively graded and $\Gr(A_B)=\MS(A_B)$. Therefore, (a) and (c) are equivalent. Note that equivalence of (a) with (b) follows from Theorem~\ref{stable_toric}.     

If $S\neq\emptyset$, we assume that $B_1,\ldots,B_t$ are the mixed bouquets of $A$ for some $t$ with $t\leq s$. We will show the equivalence of (a) and (b), and then of (a) and (c). 

$(b)\Rightarrow (a):$ Since the four sets coincide and $I_A$ is positively graded, $\Gr(A)=\MS(A)$. Assume by contradiction that there exists a Graver basis element ${\bf u}$ of $A_B$ which has a proper semiconformal decomposition that is conformal on the components corresponding to $S$. This implies that there exist nonzero vectors ${\bf v}, {\bf w}\in\Ker_{\ZZ}(A_B)$ such that ${\bf u}={\bf v}+_{sc}{\bf w}$ and $(u_1,\ldots,u_t)=(v_1,\ldots,v_t)+_{c}(w_1,\ldots,w_t)$. Since ${\bf u}\in\Gr(A_B)$ it follows from Theorem~\ref{gen_all_is_well} that $B({\bf u})\in\Gr(A)$. We will prove that $B({\bf u})=B({\bf v})+_{sc}B({\bf w})$, a contradiction to our assumption that $\Gr(A)=\MS(A)$. Note that conformality implies that $v_i$ and $w_i$ have the same sign for all $i=1,\ldots,t$ thus 
\begin{eqnarray}\label{conf}
({\bf c}_{B_1}u_1,\ldots,{\bf c}_{B_t}u_t)=({\bf c}_{B_1}v_1,\ldots,{\bf c}_{B_t}v_t)+_{c}({\bf c}_{B_1}w_1,\ldots,{\bf c}_{B_t}w_t).
\end{eqnarray}
Note that $t<s$, for otherwise we obtain $B({\bf u})=B({\bf v})+_{c}B({\bf w})$ and thus $B({\bf u})\notin\Gr(A)$, a contradiction. Since the bouquets $B_{t+1},\ldots,B_s$ are not mixed, it follows that the vectors ${\bf c}_{B_{t+1}},\ldots,{\bf c}_{B_s}$ have all coordinates positive and thus 
\begin{eqnarray}\label{semiconf}
({\bf c}_{B_{t+1}}u_{t+1},\ldots,{\bf c}_{B_s}u_s)=({\bf c}_{B_{t+1}}v_{t+1},\ldots,{\bf c}_{B_s}v_s)+_{sc}({\bf c}_{B_{t+1}}w_{t+1},\ldots,{\bf c}_{B_s}w_s).
\end{eqnarray} 
Combining (\ref{conf}) and (\ref{semiconf}) we obtain $B({\bf u})=B({\bf v})+_{sc}B({\bf w})$, and the claim follows. 

$(a)\Rightarrow (b)$: Assume that there exists no element in the Graver basis of $A_B$ which has a proper semiconformal decomposition that is conformal on the components corresponding to $S$. We argue by contradiction and suppose that $\MS(A)$ is properly contained in $\Gr(A)$. Then, by Theorem~\ref{gen_all_is_well}, there exists an element $B({\bf u})\in\Gr(A)$ for some ${\bf u}\in\Gr(A_B)$ such that $B({\bf u})\notin\MS(A)$. Applying again \cite[Proposition 1.1]{CTV} we obtain that there exist nonzero vectors ${\bf v},{\bf w}\in\Ker_{\ZZ}(A_B)$ such that $B({\bf u})=B({\bf v})+_{sc}B({\bf w})$. We will prove that ${\bf u}={\bf v}+_{sc}{\bf w}$ and $(u_1,\ldots,u_t)=(v_1,\ldots,v_t)+_{c}(w_1,\ldots,w_t)$, a contradiction to our hypothesis. Since $B_1,\ldots,B_t$ are mixed each one of the vectors ${\bf c}_{B_1},\ldots,{\bf c}_{B_t}$ has one negative and one positive coordinate by Lemma~\ref{mixed_bouquet}. Since $B({\bf u})=B({\bf v})+_{sc}B({\bf w})$, it follows that 
\[
({\bf c}_{B_1}u_1,\ldots,{\bf c}_{B_t}u_t)=({\bf c}_{B_1}v_1,\ldots,{\bf c}_{B_t}v_t)+_{sc}({\bf c}_{B_1}w_1,\ldots,{\bf c}_{B_t}w_t),
\]   
and thus $v_i$ and $w_i$ have the same sign for all $1\leq i\leq t$. Therefore we obtain that $(u_1,\ldots,u_t)=(v_1,\ldots,v_t)+_{c}(w_1,\ldots,w_t)$. If $t=s$ then the proof of this implication is complete.
	If $t<s$, then bouquets $B_{t+1},\ldots,B_s$ are not mixed, and consequently all coordinates of the vectors  ${\bf c}_{B_{t+1}},\ldots,{\bf c}_{B_s}$ are positive. Since 
\[
({\bf c}_{B_{t+1}}u_{t+1},\ldots,{\bf c}_{B_s}u_s)=({\bf c}_{B_{t+1}}v_{t+1},\ldots,{\bf c}_{B_s}v_s)+_{sc}({\bf c}_{B_{t+1}}w_{t+1},\ldots,{\bf c}_{B_s}w_s),
\]
it follows  that $(u_{t+1},\ldots,u_s)=(v_{t+1},\ldots,v_s)+_{sc}(w_{t+1},\ldots,w_s)$. This implies that ${\bf u}={\bf v}+_{sc}{\bf w}$ and the proof is complete. 

$(a)\Rightarrow (c)$: Assume by contradiction that $I_{A_B}$ is not $S$-Lawrence. This implies that there exists a vector ${\bf u}\in\Gr(A_B)$ and a positive vector ${\bf w}\in\MF_{{\bf u}^+}\setminus\{{\bf u}^+, {\bf u}^-\}$ such that the $S$-part of ${\bf w}$ belongs to $P_S({\bf u})$. We claim that (*): ${\bf u}=({\bf u}^+-{\bf w})+_{sc}({\bf w}-{\bf u}^-)$ and the sum is conformal on the coordinates corresponding to $S$, which leads us to a contradiction and the claim follows. We will prove first that the sum is conformal on the coordinates of $S$. Let $i$ be an integer of $S$. If $u_i>0$ then  $u_i=u^+_i$, $u^-_i=0$, and by the hypothesis $w_i\leq u^+_i$. Thus  $u^+_i-w_i\geq 0$, $w_i-u^-_i=w_i\geq 0$. If $u_i=0$ then $u^+_i=u^-_i=0$ and by hypothesis also $w_i=0$, so $ u^+_i-w_i=w_i-u^-_i=0$. Finally, if $u_i<0$ then  $u_i=u^-_i$, $u^+_i=0$, $w_i\leq  u^-_i$, $u^+_i-w_i=-w_i\leq 0$, $w_i- u^-_i\leq 0$. Hence it follows that the sum (*) is conformal on the coordinates corresponding to $S$, and implicitly the sum (*) is also semiconformal on the coordinates corresponding to $S$. It remains to check  semiconformality only on the coordinates not in $S$. There are such coordinates, since otherwise we obtain that the sum is conformal, a contradiction to ${\bf u}\in\Gr(A_B)$. Let $i\not\in S$ be an integer. If $u^+_i-w_i>0$ then $u^+_i>0$, $u^-_i=0$, and thus  $w_i- u^-_i= w_i\geq 0$. On the other hand if $ w_i- u^-_i<0$ then $ u^-_i>0$, $ u^+_i=0$ and we obtain $u^+_i- w_i=- w_i\leq 0$. Therefore we have proved that the sum (*) is semiconformal, and the claim follows. 

$(c)\Rightarrow (a)$:  Assume by contradiction that there exists an element ${\bf u}\in\Gr(A_B)$ which has a proper semiconformal decomposition, that is conformal on the coordinates corresponding to $S$. This implies that there exist nonzero vectors ${\bf v},{\bf w}\in\Ker_{\ZZ}(A_B)$ such that ${\bf u}={\bf v}+_{sc}{\bf w}$ and $(u_1,\ldots,u_t)=(v_1,\ldots,v_t)+_{c}(w_1,\ldots,w_t)$. By definition, semiconformality implies that ${\bf u}^+\geq {\bf v}^+$ and ${\bf u}^-\geq {\bf w}^-$. We claim that the vector ${\bf z}={\bf u}^+-{\bf v}={\bf u}^-+{\bf w}$ is positive, and thus ${\bf z}\in\MF_{{\bf u}^+}$, ${\bf z}$ is different from ${\bf u}^+$ and ${\bf u}^-$ and the $S$-part of ${\bf z}$ belongs to $P_S({\bf u})$. This yields a contradiction to the hypothesis that $I_{A_B}$ is $S$-Lawrence, and the proof is complete. Since ${\bf z}={\bf u}^+-{\bf v}={\bf u}^+-{\bf v}^++{\bf v}^-$, then ${\bf z}\geq {\bf 0}$ follows from ${\bf u}^+\geq {\bf v}^+$. Moreover, ${\bf v},{\bf w}\neq{\bf 0}$, provides that ${\bf z}\neq {\bf u}^+,{\bf u}^-$. Finally, that the $S$-part of ${\bf z}$ belongs to $P_S({\bf u})$  holds immediately after analyzing the cases 1) $i\in S$ with $u_i>0$, 2) $i\in S$ with $u_i=0$, and 3) $i\in S$ with $u_i<0$.\qed

\end{proof}

Theorem~\ref{graver=indisp}  expresses the equality of the four sets of bases in terms of the bouquet structure and the $S$-Lawrence property. It mainly provides a way of producing examples of  strongly robust toric ideals: choose any toric ideal $I_B\subset K[x_1,\dots ,x_s]$ and a subset $S$ of $[s]$ such that $I_B$ is $S$-Lawrence, then use Section~\ref{sec:GeneralizedLawrenceMatrices} to assign $s$ bouquets to $I_B$ in such a way that the ones assigned to the elements in $S$ are mixed, and obtain finally a toric ideal $I_A$ for which the four sets of bases are equal. However, Theorem~\ref{graver=indisp} says that this procedure provides all strongly robust toric ideals, and a better understanding of the $S$-Lawrence property in general would provide a complete characterization of these toric ideals. In the following example we show how to use the equivalence of (a) and (c) from Theorem~\ref{graver=indisp}.

\begin{Example}\label{four_sets}
{\em a)  Consider the hypergraph $\MH=(V,\ME)$ with the set of vertices $V=\{x,v_1,\ldots,v_{14}\}$ and whose set of edges $\ME$ consists of the following $12$ edges: $E_1=\{x, v_1, v_2\}$, $E_2=\{x, v_3, v_4\}$, $E_3=\{x, v_5, v_6\}$, $E_4=\{ v_1, v_3, v_5\}$, $E_5=\{v_2, v_4, v_6\}$, $E_6=\{x, v_7, v_8\}$, $E_7=\{x, v_9, v_{10}\}$, $E_8=\{x, v_{11}, v_{12}\}$, $E_9=\{ x, v_{13}, v_{14}\}$, $E_{10}=\{v_7, v_8, v_9\}$,  $E_{11}=\{v_{10}, v_{11}, v_{13}\}$, $E_{12}=\{v_{12}, v_{14}\}$. Let $A=[{\bf a}_1,\ldots,{\bf a}_{13}]\in\ZZ^{15\times 13}$, where ${\bf a}_1,\ldots,{\bf a}_{12}$ are the support vectors of the edges $E_1,\ldots,E_{12}$ and ${\bf a}_{13}$ is the vector $(5,0,\ldots,0)$. With respect to the basis $G_1,G_2$ of $\Ker_{\ZZ}(A)$, where
\[
G_1=(0,0,0,0,0,5,5,5,5,-5,-5,-5,-4),
\]
and 
\[
G_2=(1,1,1,-1,-1,-2,-2,-2,-2,2,2,2,1),
\]    
we see that the Gale transforms are: $G({\bf a}_1)=G({\bf a}_2)=G({\bf a}_3)=-G({\bf a}_4)=-G({\bf a}_5)=(0,1)$, $G({\bf a}_6)=\cdots=G({\bf a}_9)=-G({\bf a}_{10})=\cdots=-G({\bf a}_{12})=(5,-2)$, and $G({\bf a}_{13})=(-4,1)$. Thus the bouquet graph $G_A$ has three non-free bouquets $B_1,B_2,B_3$: The first two, $B_1$ and $B_2$, are mixed bouquets corresponding to the sets of vertices  $\{{\bf a}_1,\ldots,{\bf a}_{5}\}$ and $\{{\bf a}_6,\ldots,{\bf a}_{12}\}$, respectively. The third, $B_3$, is an isolated vertex ${\bf a}_{13}$. Hence $A_B=[{\bf a}_{B_1},{\bf a}_{B_2},{\bf a}_{B_3}]$, where ${\bf a}_{B_1}=(3,0,\ldots,0)$, ${\bf a}_{B_2}=(4,0,\ldots,0)$, ${\bf a}_{B_3}=(5,0,\ldots,0)$, $s=3$, $S=\{1,2\}$. The Graver basis elements of $A_B$ are $(4,-3,0)$, $(1,-2,1)$, $(3,-1,-1)$, $(2,1,-2)$, $(5,0,-3) $, $(1,3,-3)$, $(0,5,-4)$. Since $(4,-3,0)=(3,-1,-1)+_{sc}(1,-2,1)$, and the sum is conformal on the first two coordinates, i.e. on $S$, it follows that the condition (a) of Theorem~\ref{graver=indisp} is not satisfied and therefore the four sets are not simultaneously equal. Note that in this case the toric ideal of $A_B$ is positively graded.  

b) Consider the graph from Figure~\ref{Figa}, and denote by $A$ its incidence matrix whose column vectors ${\bf e}_1,\ldots,{\bf e}_{15}$ are given by the support vectors of the edges $e_1,\ldots,e_{15}$. 

\begin{figure}[hbt]
\begin{center}
\psset{unit=0.9cm}
\begin{pspicture}(0,0)(10,7)
\rput(1,2){$\bullet$}
\rput(3,1){$\bullet$}
\rput(3,3){$\bullet$}
\rput(5,2){$\bullet$}
\rput(7,1){$\bullet$}
\rput(9,2){$\bullet$}
\rput(7,3){$\bullet$}
\rput(4,4){$\bullet$}
\rput(5,6){$\bullet$}
\rput(6,4){$\bullet$}
\rput(5,1.65){$1$}
\rput(3,0.65){$2$}
\rput(0.65,2){$3$}
\rput(3,3.35){$4$}
\rput(3.65,4){$5$}
\rput(5,6.35){$6$}
\rput(6.35,4){$7$}
\rput(7,3.35){$8$}
\rput(9.35,2){$9$}
\rput(7,0.65){$10$}
\psline[linewidth=1.3pt, linecolor=black](1,2)(3,1)
\psline[linewidth=1.3pt, linecolor=black](1,2)(3,3)
\psline[linewidth=1.3pt, linecolor=black](5,2)(3,1)
\psline[linewidth=1.3pt, linecolor=black](5,2)(3,3)
\psline[linewidth=1.3pt, linecolor=black](5,2)(4,4)
\psline[linewidth=1.3pt, linecolor=black](5,2)(6,4)
\psline[linewidth=1.3pt, linecolor=black](4,4)(5,6)
\psline[linewidth=1.3pt, linecolor=black](6,4)(5,6)
\psline[linewidth=1.3pt, linecolor=black](5,2)(7,1)
\psline[linewidth=1.3pt, linecolor=black](5,2)(7,3)
\psline[linewidth=1.3pt, linecolor=black](9,2)(7,1)
\psline[linewidth=1.3pt, linecolor=black](9,2)(7,3)
\psline[linewidth=1.3pt, linecolor=black](5,2)(1,2)
\psline[linewidth=1.3pt, linecolor=black](5,2)(5,6)
\psline[linewidth=1.3pt, linecolor=black](5,2)(9,2)
\rput(4.1,1.25){${e_1}$}
\rput(2,1.25){${e_2}$}
\rput(1.9,2.75){${e_3}$}
\rput(4,2.75){${e_4}$}
\rput(3,2.25){${e_5}$}
\rput(4.15,3.1){${e_6}$}
\rput(4.15,5){${e_7}$}
\rput(5.85,5){${e_8}$}
\rput(5.85,3.1){${e_9}$}
\rput(4.65,4){${e_{10}}$}
\rput(5.9,2.75){${e_{11}}$}
\rput(8.1,2.75){${e_{12}}$}
\rput(8.2,1.25){${e_{13}}$}
\rput(6,1.25){${e_{14}}$}
\rput(7,2.25){${e_{15}}$}
\end{pspicture}
\end{center}
\caption{}\label{Figa}
\end{figure}
 
Then the bouquet graph associated to $A$ has nine non-free bouquets $B_1,\ldots,B_9$: the first six are mixed bouquets corresponding to the edges $\{{\bf e}_1,{\bf e}_2\}$, $\{{\bf e}_3,{\bf e}_4\}$, $\{{\bf e}_6,{\bf e}_7\}$, $\{{\bf e}_8,{\bf e}_9\}$, $\{{\bf e}_{11},{\bf e}_{12}\}$, $\{{\bf e}_{13},{\bf e}_{14}\}$, and the last three are the isolated vertices ${\bf e}_5,{\bf e}_{10},{\bf e}_{15}$. Therefore not all bouquets of $G_A$ are mixed, $A_B=\{{\bf a}_{B_1},\ldots,{\bf a}_{B_9}\}$, where ${\bf a}_{B_1}={\bf e}_1-{\bf e}_2$, ${\bf a}_{B_2}={\bf e}_3-{\bf e}_4$, ${\bf a}_{B_3}={\bf e}_6-{\bf e}_7$, ${\bf a}_{B_4}={\bf e}_8-{\bf e}_9$, ${\bf a}_{B_5}={\bf e}_{11}-{\bf e}_{12}$, ${\bf a}_{B_6}={\bf e}_{13}-{\bf e}_{14}$, ${\bf a}_{B_7}={\bf e}_5$, ${\bf a}_{B_8}={\bf e}_{10}$, ${\bf a}_{B_9}={\bf e}_{15}$ and $S=\{1,\ldots,6\}$. The toric ideal of $A_B$ is not positively graded, and thus all the fibers of $I_{A_B}$ are infinite. We will check that $I_{A_B}$ is $S$-Lawrence in order to use Theorem~\ref{graver=indisp} to conclude that the four bases are equal. The
 Graver basis of $A_B$ has fifteen elements, one of which is ${\bf u}=(0,1,1,0,0,0,-1,1,0)$. The degree of the fiber of ${\bf u}^+,{\bf u}^-$ is equal to ${\bf a}_{B_2}+{\bf a}_{B_3}+{\bf a}_{B_8}={\bf a}_{B_7}=(1,0,1,0,0,0,0,0,0,0)$. Since the fiber of ${\bf u}^+$ is found by computing all nonnegative solutions of the equation $\sum_{i=1}^9 n_i{\bf a}_{B_i}=(1,0,1,0,0,0,0,0,0,0)$, any element in the fiber is of one of the following three types of nonnegative vectors
\[
(\alpha,\alpha,\beta,\beta,\gamma,\gamma,1,0,0), \ (\alpha,\alpha+1,\beta,\beta-1,\gamma,\gamma,0,1,0), \ (\alpha,\alpha+1,\beta,\beta,\gamma,\gamma-1,0,0,1). 
\]    
Let ${\bf w}$ be in the fiber of ${\bf u}^+$ such that the $S$-part of ${\bf w}$ belongs to the $S$-parallelepiped of ${\bf u}$, that is $w_1=w_4=w_5=w_6=0$ and $w_2,w_3\leq 1$. It follows immediately that ${\bf w}$ is either of the first type with $\alpha=0,\beta=0,\gamma=0$, which implies ${\bf w}={\bf u}^-$, or is of the second type with $\alpha=0,\beta=1,\gamma=0$, and then ${\bf w}={\bf u}^+$. Hence there exists no vector ${\bf w}$ different than ${\bf u}^+$ and ${\bf u}^-$ which belongs to $P_S({\bf u})$. Similarly one can check for the rest of the elements of $\Gr(A_B)$ and conclude that $I_{A_B}$ is $S$-Lawrence. Therefore we obtain that the four sets are equal. The same conclusion could have been drawn for the toric ideal $I_A$ as a consequence of the graph-theoretical description of the Graver, universal Gr\"obner, and any minimal Markov basis of $A$ given in \cite{RTT,TT}.}
\end{Example}

As an immediate consequence of Theorem~\ref{graver=indisp} we obtain the following. 

\begin{Corollary}\label{gen_no_free}
Suppose that every non-free bouquet of $A$ is mixed. Then $I_A$ is strongly robust. 
\end{Corollary}
\begin{proof}
Note first that if every non-free bouquet is mixed then $\Ker_{\ZZ}(A)\cap\NN^n=\{\bf 0\}$. As before, we may assume that $A$ has no free bouquet and  since all  $s$ bouquets are mixed then  $S=\{1,\ldots,s\}$, and thus condition (a) of Theorem~\ref{graver=indisp} is trivially satisfied. An application of Theorem~\ref{graver=indisp} leads to the desired conclusion.  \qed

\end{proof}

Note that the condition that each non-free bouquet of $A$ is mixed is not a sufficient condition, as Example~\ref{four_sets}(b) shows. We close this section by showing how one recovers \cite[Theorem 7.1]{St} from Corollary~\ref{gen_no_free}.  

\begin{Remark}\label{second_lawrence}
{\em Let $D\in\ZZ^{m\times n}$ be an integer matrix and $\Lambda(D)$ its second Lawrence lifting. We denote  by $\alpha_1,\ldots,\alpha_{n}$ the columns of $D$. By Remark~\ref{why_gen_lawrence}, $\Lambda(D)$ is after a column permutation just the matrix $A$ constructed in Theorem~\ref{inverse_construction} for the vectors $\ab_i=\alpha_i$ and $\cb_i=(1,-1)$ for all $i=1,\ldots,n$. Applying Theorem~\ref{inverse_construction} we obtain that $A$ has $n$ subbouquets, which are either free or mixed by Lemma~\ref{mixed_bouquet}. Thus by Corollary~\ref{gen_no_free} we obtain the desired conclusion.} 
\end{Remark}


Interestingly, all previously known examples of equality $\Gr(A)=\MS(A)$, that is, of strongly robust toric ideals in the literature  not arising from graphs, as those from \cite{BBDLMNS}, and Example 4.3a), are  such that  matrices $A$ have only mixed bouquets. Our results provide many additional such easy-to-construct examples, e.g.\ Corollary~\ref{gen_no_free}, recovering, in particular, \cite[Proposition 7.1]{St} as a special case. On the other hand,  Example~\ref{four_sets} b) constructs a strongly robust toric ideal that has both mixed and non-mixed bouquets. However, we do not have an example of a strongly robust  toric ideal $I_A$, whose matrix $A$ has no mixed bouquet, raising  the natural question:
\begin{Question}\label{strongly_mixed}
 \emph{Is it true that if $I_A$ is strongly robust then $A$ has at least one mixed bouquet?} 
\end{Question}
For readers interested in this question, we gather a few remarks.  Note that if $A$ has $s$ bouquets then,  since  $I_{A_B}$ is always $[s]$-Lawrence the consequence of Theorem~\ref{graver=indisp} is that: in the case when $A$ has all of the bouquets mixed then the converse is also true, and thus $I_A$ is strongly robust. Recently, Sullivant showed in \cite{Su} that our question has a positive answer in the case of strongly robust toric ideals of codimension less than or equal to 2, based on the comprehensive knowledge of the codimension 2 lattice ideals from \cite{PS1}. The main ingredient in his proof was a complete characterization of the strongly robust codimension 2 toric ideals in terms of the Gale transform. In  general, we can only say that when $A$ has no mixed bouquets, Theorem~\ref{graver=indisp} implies that $I_A$ is strongly robust if and only if $I_{A_B}$ is strongly robust; also a consequence of Theorem~\ref{stable_toric}. 
Moreover, at the moment, we do not know whether $S$-Lawrence toric ideals have special properties when $\emptyset\subsetneq S\subsetneq [n]$.

\end{document}